\newtheoremstyle{slthm}% name
  {9pt}%      Space above, empty = `usual value'
  {5pt}%      Space below
  {\slshape}% Body font
  {}%         Indent amount (empty = no indent, \parindent = para indent)
  {\bfseries}% Thm head font
  {.}%        Punctuation after thm head
  {.5em}%     Space after thm head: " " = normal interword space;
\newtheoremstyle{prcl}% name
  {9pt}%      Space above, empty = `usual value'
  {5pt}%      Space below
  {\slshape}% Body font
  {}%         Indent amount (empty = no indent, \parindent = para indent)
  {\bfseries}% Thm head font
  {.}%        Punctuation after thm head
  {.5em}%     Space after thm head: " " = normal interword space;
\theoremstyle{slthm}
\newtheorem{theorem}{Theorem}[section]
\newtheorem{lemma}[theorem]{Lemma}
\newtheorem{proposition}[theorem]{Proposition}
\newtheorem{corollary}[theorem]{Corollary}
\newtheorem{introThm}{Theorem}
\theoremstyle{definition}
\newtheorem{definition}[theorem]{Definition}
\newtheorem{definitions}[theorem]{Definitions}
\newtheorem{example}[theorem]{Example}
\theoremstyle{remark}
\newtheorem{notation}[theorem]{Notation}
\newtheorem{remark}[theorem]{Remark}
\newtheorem{remarks}[theorem]{Remarks}
\newenvironment{renumerate}
        {
         \begin{enumerate}}
        {\end{enumerate}}
\numberwithin{equation}{section}
\newcommand{\A}{\mathcal{A}}
\newcommand{\B}{\mathcal{B}}
\newcommand{\C}{\mathcal{C}}
\newcommand{\E}{\mathcal{E}}
\newcommand{\F}{\mathcal{F}}
\newcommand{\G}{\mathcal{G}}
\renewcommand{\L}{\mathcal{L}}   %
\renewcommand{\S}{\mathcal{S}}   %
\newcommand{\T}{\mathcal{T}}
\newcommand{\NN}{\mathbb{N}}
\newcommand{\QQ}{\mathbb{Q}}
\newcommand{\RR}{\mathbb{R}}
\newcommand{\CC}{\mathbb{C}}
\newcommand{\VV}{\mathbb{V}}
\newcommand{\II}{\mathbb{I}}
\renewcommand{\bar}[1]{\overline{#1}} %
\newcommand{\tld}[1]{\widetilde{#1}}
\newcommand{\lb}{\langle} %left angular bracket
\newcommand{\rb}{\rangle} %right angular bracket
\DeclareMathOperator{\td}{td}
\DeclareMathOperator{\rank}{rank}
\DeclareMathOperator{\Zar}{Zar}
\DeclareMathOperator{\dom}{dom}
\DeclareMathOperator{\im}{im}
\newcommand{\Restr}[1]{\big|_{#1}}
\DeclareMathOperator{\an}{an}
\newcommand{\PD}[3]{\frac{\partial^{#1}#2}{\partial {#3}^{#1}}}
\newcommand{\PDn}[3]{\frac{\partial^{|#1|}#2}{\partial {#3}^{#1}}}
\DeclareMathOperator{\Int}{int}
\DeclareMathOperator{\cl}{cl}
\DeclareMathOperator{\bd}{bd}
\DeclareMathOperator{\Th}{Th}
\DeclareMathOperator{\diam}{diam}
\DeclareMathOperator{\dist}{dist}
\DeclareMathOperator{\Ball}{Ball}
\DeclareMathOperator{\Gen}{Gen}
\DeclareMathOperator{\IF}{IF}
\DeclareMathOperator{\Comp}{Comp}
\DeclareMathOperator{\name}{name}
\DeclareMathOperator{\Strat}{Strat}
\DeclareMathOperator{\id}{id}
\DeclareMathOperator{\width}{width}
\DeclareMathOperator{\length}{length}
\title[Constructing o-minimal structures with decidable theories] {Constructing o-minimal structures with decidable theories using generic families of functions from quasianalytic classes}
\author[D. J. Miller]{Daniel J. Miller}
\address{Emporia State University, Department of Mathematics, Computer Science and Economics, 1200 Commercial Street, Campus Box 4027, Emporia, KS 66801, U.S.A.}
\email{dmille10@emporia.edu}
\subjclass[2000]{Primary 03C64, 03C57, 32B20; Secondary 03F60, 26E10}
\begin{document}
\maketitle

\begin{abstract}
Let $\RR_{\S}$ denote the expansion of the real ordered field by a family of real-valued functions $\S$, where each function in $\S$ is defined on a compact box and is a member of some quasianalytic class which is closed under the operations of function composition, division by variables, and extraction of implicitly defined functions.  It is shown that if the family $\S$ is generic (which is a certain technically defined transcendence condition), then the theory of $\RR_{\S}$ is decidable if and only if $\S$ is computably $C^\infty$ (which means that all the partial derivatives of the functions in $\S$ may be effectively approximated).  It is also shown that, in a certain topological sense, many generic, computably $C^\infty$ families $\S$ exist.
\end{abstract}

\section*{Introduction}\label{s:intro}

Tarski \cite{Tarski} proved that the theory of the real field is decidable.  In the same paper, he asked if the theory of the real exponential field is decidable.  Macintyre and Wilkie \cite{MW} proved that theory of the real exponential field is decidable if Schanuel's conjecture is true.  But since a proof of Schanuel's conjecture --- or a suitable replacement if the conjecture is false --- appears to be no easy feat, the following more basic question was still left open by Macintyre and Wilkie's work:
\begin{enumerate}
\item[($*$)]
Does there exist an o-minimal expansion of the real field with a decidable first-order theory which defines a transcendental function $f:\RR^n\to\RR$ for some $n > 0$?
\end{enumerate}
This paper shows that ($*$) has an affirmative answer.  This is accomplished by constructing expansions of the real field by families of functions from quasianalytic classes which satisfy a certain computability condition and also a genericity condition.  This proof shows that, in a certain topological sense, there are many proper o-minimal expansions of the real field with decidable theories.  But, it does not construct any natural examples of such structures.

The proof is based on a characterization of decidability proven by the author in \cite{DJMcharDec}, and we shall assume that the reader is familiar with \cite[Notation 0.2 and Sections 2-4]{DJMcharDec}.  This includes all of the concepts from computable analysis found in \cite[Sections 2 and 3]{DJMcharDec} and the section on IF-systems \cite[Sections 4]{DJMcharDec}.  (Some of this material on computable analysis will be reviewed, but not all of it.)  The other concepts needed from \cite{DJMcharDec} will be restated in Section \ref{s:parent}.

%---------------------------------------------------------------------------
\subsection*{The Main Results}
We now work towards stating the first of our two main theorems.
Throughout the entire paper, we fix the following objects:
\begin{enumerate}{\setlength{\itemsep}{3pt}
\item
a quasianalytic IF-system $\C = \bigcup_{n\in\NN, r\in\QQ_{+}^{n}} \C_r$;\\
(Recall from \cite{DJMcharDec} that this roughly means that each $\C_r$ is a quasianalytic ring of real-valued functions on $[-r,r] = [-r_1,r_1]\times\cdots\times[-r_n,r_n]$, and $\C$ is closed under the operations of function composition, division by variables, and extraction of implicitly defined functions.  The prototypical example of such a $\C$ is when each $\C_r$ denotes the ring of all real-valued functions on $[-r,r]$ which extend to an analytic function in a neighborhood of $[-r,r]$.)

\item
a computable index set $\Sigma$, and computable maps $\eta:\Sigma\to\NN$ and $\rho:\Sigma\to\bigcup_{n\in\NN}\QQ^{n}_{+}$ such that $\rho(\sigma)\in\NN^{\eta(\sigma)}$ for all $\sigma\in\Sigma$.
}\end{enumerate}
Note that $\rho$ determines both $\Sigma$ and $\eta$.

Consider a family of functions $\S = \{S_\sigma\}_{\sigma\in\Sigma}$, where $S_\sigma\in\C_{\rho(\sigma)}$ for each $\sigma\in\Sigma$.  We call $\Sigma$ the {\bf index set} of $\S$, $\eta$ the {\bf arity map} of $\S$, and $\rho$ the {\bf domain map} of $\S$, since these three objects are used to index $\S$ and to specify the arity and the domains of each of the functions in $\S$.  We also call $\eta$ the {\bf arity map} of $\rho$.

\begin{definition}\label{def:RS}
Let $\RR_{\S}$ denote the expansion of the real ordered field by the family of functions $\{\widehat{S}_\sigma\}_{\sigma\in\Sigma}$, where each $\widehat{S}_\sigma:\RR^{\eta(\sigma)}\to\RR$ is defined from $S_\sigma$ by
\[
\widehat{S}_\sigma(x) = \begin{cases}
S_\sigma(x),   & \text{if $x\in[-\rho(\sigma),\rho(\sigma)]$,} \\
0,      & \text{if $x\in\RR^{\eta(\sigma)}\setminus[-\rho(\sigma),\rho(\sigma)]$,}
\end{cases}
\]
and let $\L_{\S}$ denote that language of the structure $\RR_{\S}$.
\end{definition}

It is shown in \cite{DJMcharDec} that the first-order theory of $\RR_{\S}$ is decidable if and only if two oracles, called the approximation and precision oracles for $\S$, are decidable.  Loosely stated, the approximation oracle for $\S$ allows one to approximate any partial derivative of any function in $\S$ to within any given error, and the precision oracle for $\S$ allows one to decide when a manifold $M\subseteq\RR^n$ is contained in a coordinate hyperplane $\{x\in\RR^n : x_i = 0\}$ when one is given $i\in\{1,\ldots,n\}$ and a system of equations which defines $M$ nonsingularly, where the functions occurring in the equations are rational polynomials of the coordinate variables $x = (x_1,\ldots,x_n)$ and the partial derivatives of the functions in $\S$ (see Definitions \ref{def:approxOracle} and \ref{def:precOracle} for precise definitions).  We say that $\S$ is {\bf computably $C^\infty$} if its approximation oracle is decidable (see Definition \ref{def:compCp}).  We will define in Section \ref{s:genDec} what it means for $\S$ to be ``generic'', which is a certain technically defined transcendence condition (see Definition \ref{def:generic}). Through a proof which was very much inspired by the way in which Macintyre and Wilkie used Schanuel's conjecture in \cite{MW}, we show that if $\S$ is generic and computably $C^\infty$, then the precision oracle for $\S$ is decidable.  This implies our first main theorem.

\begin{introThm}\label{introThm:genDec}
If $\S$ is generic, then the theory of $\RR_{\S}$ is decidable if and only if the approximation oracle for $\S$ is decidable.
\end{introThm}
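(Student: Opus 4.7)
The plan is to invoke the characterization of decidability proved in \cite{DJMcharDec}, which reduces decidability of the theory of $\RR_{\S}$ to the simultaneous decidability of the approximation oracle and the precision oracle for $\S$. The forward direction of the theorem follows immediately from that characterization: if $\Th(\RR_{\S})$ is decidable, then both oracles are decidable, and in particular the approximation oracle is. No use of the genericity hypothesis is needed for this half.

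For the backward direction, I would assume $\S$ is generic and that the approximation oracle is decidable (equivalently, that $\S$ is computably $C^\infty$), and reduce the problem to showing that the precision oracle for $\S$ is also decidable. As described in the excerpt, this amounts to deciding, for a given system of equations whose terms are rational polynomials in the coordinates $x = (x_1,\ldots,x_n)$ and partial derivatives of functions in $\S$, and which defines a manifold $M \subseteq \RR^n$ nonsingularly, and for a given index $i\in\{1,\ldots,n\}$, whether $M \subseteq \{x\in\RR^n : x_i = 0\}$.

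The strategy, following the Macintyre--Wilkie template, is to combine numerical approximation with a transcendence input. Using the IF-system closure of $\C$ together with the implicit function theorem, one should be able to parametrize a branch of $M$ through some effectively constructible point, and then invoke the approximation oracle to estimate the $i$-th coordinate of points on this branch to arbitrary precision. If $M \not\subseteq \{x_i=0\}$, repeated approximation will eventually witness a nonzero value, so this case is semi-decidable. The deeper step, where genericity enters, is to certify the opposite case $M \subseteq \{x_i = 0\}$: here the genericity condition should play the role Schanuel's conjecture plays in \cite{MW}, ensuring that any such containment is forced by the algebraic/analytic form of the defining data, so that it can be detected in finitely many IF-system operations on the defining system rather than by analytic means alone. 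Running the two semi-decision procedures in parallel then yields a decision procedure for the precision oracle, and hence, via \cite{DJMcharDec}, for $\Th(\RR_{\S})$.

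The main obstacle is clearly isolating the right formulation of genericity (the forthcoming Definition \ref{def:generic}) and proving that, under it, every containment of the form $M \subseteq \{x_i = 0\}$ is algorithmically detectable from the defining equations. Two supporting difficulties are: carrying out the IF-system parametrization of $M$ in a uniformly computable way, so that the approximation oracle can actually be applied to the result; and arranging the whole argument so that it remains compatible with the second main goal of the paper, namely the existence (and topological abundance) of generic, computably $C^\infty$ families $\S$, since an overly strong notion of genericity would prevent any examples from existing.
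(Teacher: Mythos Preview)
Your high-level reduction is right: the forward direction is immediate, and for the backward direction one must decide the precision oracle using genericity plus the approximation oracle. But from that point on your proposal is only a sketch, and it misses the two concrete mechanisms that actually make the argument go through.

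First, the paper does not run parallel semi-decision procedures for ``$\varphi_i \equiv 0$'' versus ``$\varphi_i \not\equiv 0$''. Instead it reformulates the precision oracle algebraically: writing $P = p\circ F$ with $F(x) = (x,f(x))$, one has $\varphi_i \equiv 0$ iff $x_i \in \II(\im(F\circ\varphi))$, so it suffices to decide membership in this ideal. The nonsingularity of $P$ forces $\rank \PD{}{p}{(x,y)} = m-d$ along $\im(F\circ\varphi)$, so $\im(F\circ\varphi)$ lies in a unique irreducible component $X$ of $\VV(p, x_{E^c}-u)$, and one can compute generators for $\II(X)$ by primary decomposition. The point of genericity is then purely a transcendence-degree statement: if $\varphi$ satisfies a certain distinctness condition on the list $f_1,\ldots,f_n$, genericity gives $\td_{\QQ}\QQ(\im(F\circ\varphi)) = n+d = \dim \II(X)$, which forces $\II(\im(F\circ\varphi)) = \II(X)$ and hence solves the membership problem outright. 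Your proposal never surfaces this ideal/transcendence-degree link, which is the actual content replacing Schanuel's conjecture.

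Second, and this is the real technical obstacle you have not addressed, the distinctness condition can fail: the same partial derivative of the same $S_\sigma$ may appear several times among $f_1,\ldots,f_n$ with the corresponding components of $\varphi$ coinciding, and then $\td_{\QQ}\QQ(\im(F\circ\varphi)) < n+d$ necessarily. The fix is a substitution that collapses these redundancies (Lemma \ref{lemma:distCond}), but one does not know in advance \emph{which} components of $\varphi$ coincide. The paper resolves this by a finite search over all equivalence relations $\approx$ on $\{1,\ldots,n\}$ refining $\approx_{(\sigma,\alpha)}$: for each candidate $\approx$ it runs two subroutines in parallel, one trying to verify $\approx \neq \approx_{(\sigma,\alpha,\xi,\varphi)}$ by approximation, the other carrying out the primary-decomposition computation under the assumption $\approx = \approx_{(\sigma,\alpha,\xi,\varphi)}$ and checking a consistency condition that holds iff the guess was correct. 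So there \emph{is} a parallel search, but it is over the finite set of possible redundancy patterns, not over the two outcomes of the precision question. Without identifying this issue your plan has no route to the ``$\varphi_i \equiv 0$'' side.
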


We now work towards stating our second main theorem.

\begin{definitions}\label{def:compFamSpace}
Let $\Comp_{\C}(\rho)$ denote the set of all computably $C^\infty$ families of functions in $\C$ with domain map $\rho$, and let $\Gen_{\C}(\rho)$ denote that set of all generic families of functions in $\C$ with domain map $\rho$.
Put
\[
\Delta(\rho) = \{(\sigma,\alpha) : \sigma\in\Sigma, \alpha\in\NN^{\eta(\sigma)}\}.
\]
For each $\S\in\Comp_{\C}(\rho)$ and computable map $\epsilon:\Delta(\rho)\to\QQ_+$, define
\[
\Ball_{\C}(\S,\epsilon)
=
\left\{\T\in\Comp_{\C}(\rho) :
\begin{array}{l}
\left|\PDn{\alpha}{T_\sigma}{x}(x) - \PDn{\alpha}{S_\sigma}{x}(x)\right| < \epsilon(\sigma,\alpha), \vspace*{3pt}\\
\text{for all $(\sigma,\alpha)\in\Delta(\rho)$ and $x\in[-\rho(\sigma),\rho(\sigma)]$}
\end{array}
\right\},
\]
where we are writing $\S = \{S_\sigma\}_{\sigma\in\Sigma}$ and $\T = \{T_\sigma\}_{\sigma\in\Sigma}$.  Topologize $\Comp_{\C}(\rho)$ by taking
\[
\{\Ball_{\C}(\S,\epsilon) : \text{$\S\in\Comp_{\C}(\rho)$ and $\epsilon:\Delta(\rho)\to\QQ_+$ is computable}\}
\]
to be a base for its topology.
\end{definitions}

\begin{definition}\label{def:compAnalIFsystem}
For each $n\in\NN$ and $r\in\QQ_{+}^{n}$, let $\A_r$ denote the set of all real-valued functions on $[-r,r]$ which extend to a function in a neighborhood of $[-r,r]$ which is both analytic and computably $C^\infty$.  Then $\A = \bigcup_{n\in\NN,r\in\QQ_{+}^{n}}\A_r$ is a quasianalytic IF-system, called the {\bf IF-system of all computably analytic functions}. (This is the intersection of the IF-systems found in \cite[Examples 4.4.(2,3)]{DJMcharDec}.)
\end{definition}

\begin{introThm}\label{introThm:genDense}
If $\C$ contains the IF-system of all computably analytic functions, then $\Comp_{\C}(\rho)\cap\Gen_{\C}(\rho)$ is dense in $\Comp_{\C}(\rho)$.  In fact, there is an algorithm which acts as follows:
\begin{quote}
Given a $C^\infty$ approximation algorithm for $\S\in\Comp_{\C}(\rho)$ and a computable map $\epsilon:\Delta(\rho)\to\QQ_+$, the algorithm returns a $C^\infty$ approximation algorithm for some $\T\in \Gen(\rho) \cap \Ball_{\C}(\S,\epsilon)$.
\end{quote}
\end{introThm}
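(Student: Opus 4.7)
The plan is to construct the desired $\T$ by an effective Baire category argument: starting from $\S$, iteratively perturb by small computably analytic corrections so that, after countably many stages, every condition in the definition of genericity is satisfied, while the running total never leaves $\Ball_{\C}(\S,\epsilon)$. Since $\C$ contains the IF-system $\A$ of computably analytic functions, there is a plentiful supply of available perturbations: any polynomial with rational coefficients lies in $\A_{\rho(\sigma)}$, and more generally any uniformly convergent series of such polynomials whose Taylor coefficients decay sufficiently fast on a fixed complex neighborhood of $[-\rho(\sigma),\rho(\sigma)]$ remains computably analytic, hence lies in $\A_{\rho(\sigma)}\subseteq \C_{\rho(\sigma)}$.

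The first step is to unpack Definition \ref{def:generic} and express the genericity of $\T$ as a countable, effectively enumerable intersection $\bigcap_{k\in\NN} G_k$, where each $G_k$ forbids a specific algebraic relation among finitely many partial derivatives of finitely many of the $T_\sigma$ evaluated at finitely many rational points. Because any such forbidden relation cuts out an algebraic, hence nowhere dense, subset of the corresponding finite-dimensional coefficient space, each $G_k$ is open and dense in $\Comp_{\C}(\rho)$, and one can search effectively through rational-coefficient polynomial perturbations of bounded degree and size to find one that places the current family into $G_k$.

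With this in hand, the construction proceeds as follows. Fix a computable schedule $\epsilon_k\to 0$ with $\epsilon_0\le\epsilon$ and with $\sum_{k\ge 0}\epsilon_k$ controlled by $\epsilon$ on each member of $\Delta(\rho)$. Inductively define $\S^{(0)}=\S$, and choose $\S^{(k+1)}=\S^{(k)}+P^{(k)}$, where $P^{(k)}=\{p^{(k)}_\sigma\}_{\sigma\in\Sigma}$ is a tuple of rational polynomials whose degrees grow with $k$ and whose sizes are constrained so that $\S^{(k+1)}\in\Ball_{\C}(\S^{(k)},\epsilon_k)$, so that $\S^{(k+1)}\in G_k$, and so that $\S^{(k+1)}$ remains inside the $C^\infty$-neighborhoods already witnessing $\S^{(j+1)}\in G_j$ for $j<k$. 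Such $P^{(k)}$ exist by density of $G_k$ and are found effectively by dovetailed search. Passing to the limit, each $T_\sigma = S_\sigma + \sum_{k\ge 0} p^{(k)}_\sigma$ is a geometrically convergent series of polynomials on a fixed complex neighborhood of $[-\rho(\sigma),\rho(\sigma)]$, hence analytic; the effective control of the coefficients then yields a $C^\infty$ approximation algorithm for $\T$, so $\T\in \Comp_{\C}(\rho)$. The openness of each $G_k$, combined with the nested-neighborhood condition, ensures $\T\in G_k$ for all $k$, hence $\T\in\Gen_{\C}(\rho)$, and by construction $\T\in\Ball_{\C}(\S,\epsilon)$.

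The main obstacle lies in translating Definition \ref{def:generic} into a genuinely effective enumeration of algebraic forbidden conditions $G_k$ that each depend on only finitely many Taylor coefficients, and in arranging that the stage-$k$ perturbation is simultaneously (i) small enough to preserve membership in $G_0,\ldots,G_{k-1}$, (ii) aimed so as to enter $G_k$, and (iii) drawn from $\A\subseteq\C$. The key point that makes all of this effective is that at each stage only finitely many Taylor coefficients of each $S^{(k)}_\sigma$ are relevant to $G_0,\ldots,G_k$, so the required neighborhood shrinkage is computable and the search can be executed by standard constructive avoidance over a finite-dimensional algebraic variety; the rest of the Taylor tail is then reserved for future corrections, which is why analytic (rather than merely $C^\infty$) limits suffice to keep $\T$ inside $\C$.
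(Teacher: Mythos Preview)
Your high-level strategy---build $\T$ as a computably convergent limit of rational-polynomial perturbations of $\S$, arranging at stage $k$ that a $k$th condition from the definition of genericity is met and preserved thereafter---is indeed the paper's strategy. But your execution has a genuine gap at the point where you describe the conditions $G_k$.

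You write that each $G_k$ ``forbids a specific algebraic relation among finitely many partial derivatives of finitely many of the $T_\sigma$ evaluated at finitely many \emph{rational points}'', and later that ``only finitely many Taylor coefficients of each $S^{(k)}_\sigma$ are relevant to $G_0,\ldots,G_k$''. This is not what Definition~\ref{def:generic} says. The condition there concerns every \emph{nonsingular zero} $a$ of an $\S$-polynomial map $P=p\circ F$: such an $a$ is not rational, is not given as data, and depends on the entire function $P$ (hence on infinitely many Taylor coefficients), since it is defined implicitly by $P(a)=0$. Consequently the condition $q(\Pi_\lambda(F(a)))\neq 0$ is not an algebraic condition on any finite-dimensional coefficient space, and your ``constructive avoidance over a finite-dimensional algebraic variety'' does not apply. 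Moreover, as you perturb $\S$ the zero $a$ moves, and new nonsingular zeros can appear; your sketch does not address either phenomenon.

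What the paper actually does at stage $k$ is: (i) for each of the first $k$ names, \emph{effectively locate} all nonsingular zeros of $P^{(k)}_i$ that are quantitatively nonsingular (determinant $>2^{-k}$) inside a compact piece of $D_i$ (Lemma~\ref{lemma:findNonsingZeros}), getting finitely many boxes; (ii) for each such zero $a$, build a \emph{parameterized} perturbation $\S^{(k,l)}_\Phi(w)$ using Severi-type interpolation polynomials (Lemmas~\ref{lemma:interpolationPoly} and~\ref{lemma:pertubationPoly}) chosen so that the map $w\mapsto F_\Phi(\varphi(w))$ has invertible Jacobian at $w=0$, hence its image is open in the $n$-dimensional variety $\{p=0\}\cap B$; this is exactly what guarantees that a small rational $w=b$ can be found with $q_{n,j}\circ\Pi_{\lambda_B}\circ F_\Phi(\varphi(b))\neq 0$ for $j\le k+1$; and (iii) use the computable continuity of the implicit function theorem (Proposition~\ref{prop:contIFT}) to track zeros under perturbation, so that conditions achieved at earlier stages persist, and so that every nonsingular zero of the limit $\T$ is eventually captured at some finite stage. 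Steps (i)--(iii) are the substantive content of the proof, and none of them is present in your proposal; without them, neither the openness nor the density of your $G_k$ is established, and the argument does not go through.
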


The functions in a generic family $\S$ are easily seen to be transcendental (see Remarks \ref{rmk:genTrans}), so Theorems \ref{introThm:genDec} and \ref{introThm:genDense} answer the question ($*$) in the affirmative.

While this paper and its parent paper \cite{DJMcharDec} were being written, Jones and Servi \cite{JonesServi} also answered the question ($*$) in the affirmative by proving the following two theorems:
\begin{enumerate}
\item
If $r\in\RR$ is not $0$-definable in the real exponential field, then the expansion of the real field by the power function $(0,+\infty)\to\RR:x\mapsto x^r$ has a decidable theory if and only if $r$ is a computable real.

\item
There exists a computable real which is not $0$-definable in the real exponential field.
\end{enumerate}
The type of o-minimal structures considered by Jones and Servi are different from what is considered here, but these two theorems do compare closely with Theorems \ref{introThm:genDec} and \ref{introThm:genDense}.

%----------------------------------------------------------------------------
\subsection*{The Method}

We now discuss the main ideas of the proofs of Theorems \ref{introThm:genDec} and \ref{introThm:genDense}.  Nearly all the ``definitions'' given in this discussion are imprecise approximations to the actual definitions of the concepts, and are stated as such to not get bogged down in insignificant technical modifications needed by the actual definitions.  Precise definitions will be given in later sections.  We begin with some basic notation and terminology:
\begin{itemize}
\item
If $f:A\to B$, then $\dom(f) = A$ and $\im(f) = f(A)$.

\item
If $f:A\to B$, $C\subseteq B$, and $g:C\to A$, we call $g$ a {\bf section} of $f$ if $f\circ g(x) = x$ for all $x\in C$.

\item
If $p\in\QQ[x]^k$, where $x=(x_1,\ldots,x_n)$, define $\VV(p) = \{x\in\RR^n : p(x) = 0\}$.

\item
If $A\subseteq\RR^n$, define
\begin{eqnarray*}
\II(A)
    & = &
    \{q(x)\in\QQ[x] : \text{$q = 0$ on $A$}\},
\\
\QQ[A]
    & = &
    \{g:A\to\RR : \text{$g = q$ on $A$ for some $q(x)\in\QQ[x]$}\}
    \cong
    \QQ[x]/\II(A),
\\
\QQ(A)
    & = &
    \text{fraction field of $\QQ[A]$ (when $\II(A)$ is prime).}
\end{eqnarray*}

\item
If $K\subseteq L$ is a field extension, $\td_K L$ is the transcendence degree of $L$ over $K$.
\end{itemize}

An \emph{$\S$-polynomial map} is a function $P = p\circ F:U \to\RR^k$, where $U$ is an open box in $\RR^m$, $F:U\to\RR^{m+n}$ is defined by $F(x) = (x,f(x))$ with $f(x) = (f_1(x),\ldots,f_n(x))$ for some partial derivatives $f_1,\ldots,f_n$ of functions in $\S$, and $p(x,y)\in\QQ[x,y]^k$, with $x = (x_1,\ldots,x_m)$ and $y = (y_1,\ldots,y_n)$.  Throughout this discussion, we will use this notation for our $\S$-polynomial maps, except we will choose $k$ to be some appropriate number in $\{1,\ldots,m\}$.

Saying that $\S$ is \emph{generic} means that if $a\in\RR^m$ is such that $P(a) = 0$ and $\det\PD{}{P}{x}(a)\neq 0$ for some $\S$-polynomial map $P:U\to\RR^m$, and if the functions $f_1,\ldots,f_n$ are not redundantly listed more than is needed to define $a$ (this is made precise in Definition \ref{def:distCond}), then
\[
\td_{\QQ}\QQ(F(a)) = n.
\]

If $\varphi:\Pi(U)\to U$ is a section of a coordinate projection $\Pi:\RR^m\to\RR^d$, we say that an $\S$-polynomial map $P:U\to\RR^{m-d}$ \emph{implicitly defines} $\varphi$ if $\im(\varphi) = \{x\in U : P(x) = 0\}$ and $\rank\PD{}{P}{x} = m-d$ on $\im(\varphi)$.

The \emph{precision oracle for $\S$} acts as follows: given an $\S$-polynomial map $P:U\to\RR^{m-d}$ which implicitly defines a section $\varphi = (\varphi_1,\ldots,\varphi_m):\Pi(U)\to U$ of a coordinate projection $\Pi:\RR^m\to\RR^d$, and given $i\in\{1,\ldots,m\}$,  the oracle stops if and only if $\varphi_i$ is identically zero.  We would like to see how we might decide this oracle, so fix such a $P$, $\Pi$, $\varphi$, and $i$.  Note that since $F\circ\varphi(x) = (\varphi(x),f\circ\varphi(x))$, we have
\begin{equation}\label{eq:precOracleIdeal}
\text{$\varphi_i = 0$ if and only if $x_i \in \II(\im(F\circ\varphi))$.}
\end{equation}
Now, the nonsingularity of the equation $P(x) = 0$ and a simple computation involving the chain rule show that $\VV(p)$ is an $n+d$ dimensional manifold locally about each point of $\im(F\circ\varphi)$.   It follows that there is a unique irreducible component $X\subseteq\RR^{m+n}$ of the $\VV(p)$ such that $\im(F\circ\varphi) \subseteq X$, and that $\dim\II(X) = n+d$.   By computing the isolated primes of the ideal in $\QQ[x,y]$ generated by the components of $p(x,y)$, we can find a set of generators for the ideal $\II(X)$.  Since $\II(X) \subseteq \II(\im(F\circ\varphi))$, this gives
\begin{equation}\label{eq:tdGeneral}
\td_{\QQ}\QQ(\im(F\circ\varphi)) = \dim \II(\im(F\circ\varphi)) \leq \dim \II(X) = n+d.
\end{equation}

Now assume that $\S$ is generic.  We will show in Section \ref{s:genDec} that if the functions $f_1,\ldots,f_n$ are not redundantly listed more than is needed to define $\varphi$, then \begin{equation}\label{eq:tdGeneric}
\td_{\QQ}\QQ(\im(F\circ\varphi)) = n+d.
\end{equation}
(Note: The definition of generic is simply this statement with $d=0$.)  Now, \eqref{eq:tdGeneral} and \eqref{eq:tdGeneric} imply that $\II(\im(F\circ\varphi)) = \II(X)$, which when combined with \eqref{eq:precOracleIdeal}, shows that can decide if $\varphi_i$ is identically zero since we have a set of generators for $\II(X)$.

The geometric picture when $\S$ is generic can be summarized as follows: the set $\im(F\circ\varphi)$ is a manifold of dimension $d$, and \eqref{eq:tdGeneric} means that the $n$ functions $f_1,\ldots,f_n$ are so ``generic'' that the Zariski closure of $\im(F\circ\varphi)$ is an $n+d$ dimensional manifold locally about each point of $\im(F\circ\varphi)$, and hence $X$ must be the Zariski closure of $\im(F\circ\varphi)$.

The nontrivial direction of the proof of Theorem \ref{introThm:genDec} assumes that $\S$ is generic and computably $C^\infty$, and then proves that the precision oracle for $\S$ is decidable.  The only difficulty is to perform a suitable substitution so that the functions $f_1,\ldots,f_n$ are not ``redundantly listed more than is needed to define $\varphi$'', the exact meaning of which will not be explained here (again, see Definition \ref{def:distCond}).  When $\S$ is generic, we will be able to find an appropriate substitution by a search procedure over all possible redundant listing of the $f_1,\ldots,f_n$, and we can thereby decide the precision oracle for $\S$ using the ideas just discussed.

To prove Theorem \ref{introThm:genDense}, we must construct a family of functions which is both computably $C^\infty$ and generic.  To see what this entails, suppose that $\S$ is computably $C^\infty$, and consider a nonsingular zero $a\in\RR^m$ of an $\S$-polynomial map $P:U\to\RR^m$.  From \eqref{eq:tdGeneral} with $d=0$, we have $\td_{\QQ}\QQ(F(a)) \leq n$, so $\td_{\QQ}\QQ(F(a)) = n$ if and only if there exists a coordinate projection $\Pi:\RR^{m+n}\to\RR^n$ such that $q\circ\Pi(a)\neq 0$ for all nonzero $q(y)\in\QQ[y]$.  The proof of Theorem \ref{introThm:genDense} is based on the following two simple observations:
\begin{enumerate}{\setlength{\itemsep}{3pt}
\item
Even though solving a polynomial equation $q(y) = 0$ can be hard, solving a polynomial inequation $q(y)\neq 0$ is easy!

\item
We can computably enumerate names for all $\S$-polynomial maps, we can effectively discover each of their nonsingular zeros, and we can computably enumerate all nonzero polynomials in $n$ variables.
}\end{enumerate}
Theorem \ref{introThm:genDense} is proved by starting with $\S$ and then constructing a sequence $\{\S^{(k)}\}_{k\in\NN}$ of perturbations of $\S$ which converges uniformly (in a certain computable sense) to $\T$, and which successively solve each of the inequations needed to force $\T$ to be generic.  Each perturbation is done through the use of specially constructed interpolation polynomials.  The perturbations can be made as small as we wish, so we can force the limit function $\T$ to be in $\Ball(\S,\epsilon)$ for any given computable map $\epsilon:\Delta(\rho)\to\QQ_+$. \\

We now briefly outline the paper.  In Section \ref{s:IFT} we establish a continuity property of the implicit function theorem.  This is done for two reasons:
\begin{enumerate}{\setlength{\itemsep}{3pt}
\item
When perturbing $\S^{(k)}$ to construct $\S^{(k+1)}$, for each nonsingular zero $a^{(k)}$ of an $\S^{(k)}$-polynomial map which we have already constructed to solve some inequations, we want the perturbed nonsingular zero $a^{(k+1)}$ of the corresponding $\S^{(k+1)}$-polynomial map to also satisfy the same inequations.

\item
We need to know that every nonsingular zero of a $\T$-polynomial map is the limit of nonsingular zeros of $\S^{(k)}$-polynomial maps as $k\to\infty$.
}\end{enumerate}
In order to know that the limit family $\T$ is computably $C^\infty$, in Section \ref{s:compHolom} we establish some basic properties of sequences of families of computably holomorphic functions.  Section \ref{s:parent} discusses concepts from the parent paper \cite{DJMcharDec} that we shall need, but which are not included in \cite[Notation 0.2, Sections 2-4]{DJMcharDec}.  Theorems \ref{introThm:genDec} and \ref{introThm:genDense} are then proven in Sections \ref{s:genDec} and \ref{s:genDense}, respectively, along the lines discussed above.

\section{Computable continuity of the implicit function theorem}
\label{s:IFT}

We begin by reviewing the definition of computably $C^p$ given in \cite[Section 2]{DJMcharDec}, which is a concept dealing with mappings between subsets of Euclidean spaces.  In a little bit, we will introduce some similar sounding terminology for mappings between certain function spaces, but these concepts have slightly different meanings from their Euclidean space counterparts.

\begin{definition}\label{def:compCp}
Consider a function $f:U\to\RR^m$, where $U$ is a c.e.\ open subset of some computable domain $D$ in $\RR^n$, and let $p\in\NN\cup\{\infty\}$.  We say that $f$ is {\bf computably $C^p$} if there is an algorithm which acts as follows:
\begin{equation}\label{eq:compCpRep}
\text{\parbox{5.3in}{Given $\alpha\in\NN^n$ such that $|\alpha|\leq p$, a name for an open rational box $I$ in $\RR^m$, and a name for a compact rational box $B$ in $D$, the algorithm stops if and only if $B\subseteq U$ and $\PDn{\alpha}{f}{x}(B)\subseteq I$.
}}
\end{equation}
Any such algorithm \eqref{eq:compCpRep} is called a {\bf $C^p$ approximation algorithm} for $f$. If $p=0$, we also say that $f$ is {\bf computably continuous}.  If $p=n=0$, we call $f(0)$ a {\bf computable point} in $\RR^m$.  If $p=n=0$ and $m=1$, we call $f(0)$ a {\bf computable real}.  We shall usually just say ``approximation algorithm'', rather than ``$C^0$ approximation algorithm'', when we are working with a computably continuous function, a computable point, or a computable real.

More generally, a family of functions $\S = \{S_\sigma\}_{\sigma\in\Sigma}$ is {\bf computably $C^p$} if the index set $\Sigma$ is computable, and if there is an algorithm which acts as a $C^p$ approximation algorithm for each function in $\S$, as indexed by $\Sigma$.  Such an algorithm is called a {\bf $C^p$ approximation algorithm} for the family $\S$.
\end{definition}

In \cite[Section 2]{DJMcharDec} it was shown that the computably continuous functions satisfy the following computable analogs of two standard topological theorems: (1) a function is computably continuous if and only if it is effective true that the inverse image of every c.e.\ open set is c.e.\ open; (2) images of co-c.e.\ compact sets under computably continuous functions are co-c.e.\ compact.  In \cite[Section 3]{DJMcharDec} it was shown that computably $C^p$ functions are closed under the arithmetic operations, function composition, parameterized integrals, division by variables, and extraction of implicitly defined functions.
We now show that computable continuous functions also have an extreme value theorem and are computably uniformly continuous on co-c.e.\ compact sets.

\begin{lemma}\label{lemma:compEVT}
Let $K\subseteq U$, where $U$ is c.e.\ open in $\RR^n$ and $K$ is co-c.e.\ compact, and let $f:U\to\RR$ be computably continuous.  Then $\max\{f(x) : x\in K\}$ and $\min\{f(x) : x\in K\}$ are computable reals.
\end{lemma}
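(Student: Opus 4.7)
The plan is to show that $M := \max\{f(x) : x \in K\}$ is a computable real; the case of $\min$ is entirely symmetric, since $\min_{x\in K} f(x) = -\max_{x\in K}(-f)(x)$ and $-f$ is computably continuous whenever $f$ is. Recall that a real is computable precisely when it is simultaneously right-c.e.\ (its set of rational upper bounds is c.e.) and left-c.e.\ (its set of rational lower bounds is c.e.). I will establish each property for $M$ in turn.

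For the right-c.e.\ direction, observe that for each rational $r$ one has $M < r$ iff $K \subseteq f^{-1}((-\infty, r))$. The preimage $f^{-1}((-\infty, r))$ is c.e.\ open in $U$ by the inverse-image characterization of computable continuity (property~(1) recalled above from \cite[Section 2]{DJMcharDec}). Since $K$ is co-c.e.\ compact, inclusion of $K$ into a c.e.\ open set is semi-decidable, via the effective Heine--Borel (finite subcover) aspect of co-c.e.\ compactness; thus $\{r\in\QQ : M<r\}$ is c.e., giving the upper approximation.

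For the left-c.e.\ direction, I apply property~(2) recalled above: the image $f(K)$ is co-c.e.\ compact in $\RR$. For each rational $r$, the inequality $M > r$ is equivalent to $f(K) \cap (r,\infty) \neq \emptyset$, and $(r,\infty)$ is c.e.\ open in $\RR$. The needed ingredient is then semi-decidability of nonempty intersection between a co-c.e.\ compact set and a c.e.\ open set, which is part of the located (or ``overt'') content of co-c.e.\ compactness in \cite{DJMcharDec}. This yields a c.e.\ set of rational lower bounds of $M$, and combined with the previous step shows that $M$ is a computable real.

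The principal obstacle is the left-c.e.\ direction. Right-c.e.-ness uses only the fact that co-c.e.\ compact sets semi-decide their inclusion in c.e.\ open sets, but left-c.e.-ness needs the dual fact that they semi-decide nonempty intersection with c.e.\ open sets. The clean way to obtain this dual fact is precisely to pass to $f(K)$ via the image-preservation result, since the ``overt'' aspect of co-c.e.\ compactness is transported along computably continuous maps and is packaged into the definition adopted in the companion paper; verifying that this packaging is what the cited results from \cite[Section 2]{DJMcharDec} actually deliver is where the real work of the proof resides.
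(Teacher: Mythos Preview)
Your decomposition into a right-c.e.\ half and a left-c.e.\ half is a different route from the paper's. The paper does not pass through $f(K)$ or invoke overtness; it builds a single approximation algorithm by enumerating all finite families $\{(A_i,B_i)\}_{i\in I}$ with $K\subseteq\bigcup_i\Int(B_i)$, each $B_i\subseteq U$ a compact rational box, each $A_i$ an open rational interval, and $f(B_i)\subseteq A_i$, and then characterizes ``$M\in(a,b)$'' by the existence of such a family with all $A_i\subseteq(-\infty,b)$ and some $A_i\subseteq(a,b)$. The upper and lower approximation of $M$ are thus handled simultaneously by one cover search rather than by two separate semi-decision procedures.

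Your proposal has a gap at the left-c.e.\ step, and you flag it yourself: you need co-c.e.\ compact sets to be overt (one can semi-decide nonempty intersection with a c.e.\ open set), and you defer this entirely to \cite{DJMcharDec} without citing a specific statement or reproducing the argument. This does not follow from the bare reading of ``co-c.e.\ compact'' as ``compact with c.e.\ open complement.'' For instance, take $K=[0,c]\subseteq\RR$ with $c$ right-c.e.\ but not computable; then $K^c=(-\infty,0)\cup(c,\infty)$ is c.e.\ open (one can enumerate the rationals above $c$), yet $\max K=c$ is not left-c.e., so overtness genuinely fails here. If the companion paper's definition packages in something strictly stronger, you must say exactly what and where; otherwise the argument is incomplete. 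Announcing that ``this is where the real work resides'' and then not carrying out that work leaves you with an outline, not a proof.
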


\begin{proof}
Put $M = \max\{f(x) : x\in K\}$.  Let $\A$ be the set of all finite families $\{(A_i,B_i)\}_{i\in I}$ such that $K\subseteq\bigcup_{i\in I}\Int(B_i)$, where for each $i\in I$, $A_i$ is an open rational interval, $B_i$ is a compact rational box contained in $U$, and $f(B_i) \subseteq A_i$.  Note that if $(a,b)$ is an open interval, then $M\in (a,b)$ if and only if there exists $\{(A_i,B_i)\}_{i\in I}\in\A$ such that $A_i \subseteq (-\infty,b)$ for all $i\in I$, and $A_i \subseteq (a,b)$ for some $i\in I$.  Also note that since $K$ is co-c.e. compact and $U$ is c.e.\ open in $\RR^n$, we may construct a computable enumeration $\{\{(A_i,B_i)\}_{i\in I_j}\}_{j\in\NN}$ of $\A$.  Therefore the following is an approximation algorithm for $M$:
\begin{quote}
Given a rational open interval $(a,b)$, use time sharing to search for some $j\in\NN$ such that $A_i \subseteq (-\infty,b)$ for all $i\in I_j$, and $A_i \subseteq (a,b)$ for some $i\in I_j$.  Stop once such a $j$ has been found, and do not stop otherwise.
\end{quote}
The minimum value of $f$ on $K$ is also a computable real since $\min\{f(x) : x\in K\} = - \max\{-f(x) : x\in K\}$.
\end{proof}

\begin{definition}\label{def:compUnifCont}
Let $f:U\to\RR^m$ be a function, where $U$ is any set in $\RR^n$.  We say that $f$ is {\bf computably uniformly continuous on $A\subseteq U$} if there is a computable function $\delta:\QQ_+\to\QQ_+$ such that for all $\epsilon\in\QQ_+$ and $x,y\in A$, if $|x-y| < \delta(\epsilon)$, then $|f(x) - f(y)| < \epsilon$.
\end{definition}

Note that computably uniformly continuous functions need not be computably continuous, even if we assume their domain is c.e.\ open.  For example, if $a$ is a noncomputable real and we define $f:\RR\to\RR$ by $f(x) = a$, then $f$ is computably uniformly continuous but is not computably continuous.

\begin{lemma}\label{lemma:compUnifCont}
Let $K\subseteq U$, where $U$ is c.e.\ open in $\RR^n$ and $K$ is co-c.e.\ compact, and let $f:U\to\RR^m$ be computably continuous.  The $f$ is computably uniformly continuous.
\end{lemma}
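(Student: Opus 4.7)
I intend to execute the classical Lebesgue-number proof of uniform continuity as a c.e.\ search that is uniform in $\epsilon$. For a compact rational box $B \subseteq \RR^n$ and $\delta \in \QQ_+$, let $B^{-\delta}$ denote the open rational box obtained by shrinking each face of $B$ inward by $\delta$; equivalently, $B^{-\delta} = \{x \in \RR^n : \dist(x, \RR^n \setminus B) > \delta\}$.

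Given $\epsilon \in \QQ_+$, the algorithm searches for a finite tuple $(A_i, B_i)_{i \in I}$ together with $\delta \in \QQ_+$ such that each $B_i$ is a compact rational box contained in $U$, each $A_i$ is an open rational box in $\RR^m$ with $\diam A_i < \epsilon$, $f(B_i) \subseteq A_i$, and $K \subseteq \bigcup_{i \in I} B_i^{-\delta}$. Each of these four conditions is c.e.-verifiable: $B_i \subseteq U$ because $U$ is c.e.\ open; $f(B_i) \subseteq A_i$ by the $C^0$ approximation algorithm for $f$; the conditions on $A_i$ are decidable; and the final inclusion is c.e.\ because $K$ is co-c.e.\ compact and $\bigcup_i B_i^{-\delta}$ is c.e.\ open. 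So all such tuples can be computably enumerated, and the algorithm returns the $\delta$ of the first tuple discovered.

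Termination is ensured by the classical argument: cover $K$ by interiors of compact rational boxes $B_i \subseteq U$ on which $f$ has oscillation less than $\epsilon$, extract a finite subcover, enclose each $f(B_i)$ in a rational box $A_i$ of diameter less than $\epsilon$, and invoke the Lebesgue number lemma on $K$ to obtain a rational $\delta > 0$ such that every point of $K$ lies in some $B_i^{-\delta}$. For correctness: if $x, y \in K$ and $|x - y| < \delta$, pick $i$ with $x \in B_i^{-\delta}$; then $y \in B_i$, so $f(x), f(y) \in A_i$ and $|f(x) - f(y)| \leq \diam A_i < \epsilon$. The only thing to verify is that each predicate entering the search is genuinely c.e., which is immediate from the hypotheses on $U$, $K$, and $f$; there is no real obstacle beyond this bookkeeping.
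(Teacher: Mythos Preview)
Your proof is correct and follows essentially the same Lebesgue-number strategy as the paper: enumerate compact rational boxes $B_i\subseteq U$ on which $f$ has small image, cover $K$ by suitably shrunk versions of these boxes, and read off $\delta$ from the cover. The paper packages the ``shrunk box'' slightly differently---it enumerates triples $(A,B,C)$ with $A\subseteq B$ compact rational boxes satisfying $\{x:\dist(x,A)\le\diam(A)\}\subseteq B$, covers $K$ by the $\Int(A_i)$, and sets $\delta=\min_i\diam(A_i)$---whereas you search for $\delta$ directly via the condition $K\subseteq\bigcup_i B_i^{-\delta}$; but this is only a cosmetic difference.
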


\begin{proof}
Let $\epsilon\in\QQ_+$ be given.  Since $f$ is computably continuous, we can computably enumerate all triples $(A,B,C)$ such that $A$ and $B$ are nondegenerate compact rational boxes in $U$, $C$ is an open rational box in $\RR^m$, $f(B) \subseteq C$, $\diam(C) < \epsilon$, and $\{x\in\RR^n : \dist(x,A) \leq \diam(A)\} \subseteq B$.  Since $K$ is co-c.e.\ compact, we can find a finite family of such triples $\{(A_i,B_i,C_i)\}_{i\in I}$ such that $K\subseteq\bigcup_{i\in I}\Int(A_i)$.  Then $\delta = \min\{\diam(A_i) : i\in I\}$ is as desired.
\end{proof}

\begin{definition}\label{def:norm}
Consider $p\in\NN$.  For any $C^p$-function $f = (f_1,\ldots,f_k):U\to\RR^k$ defined on an open set $U\subseteq\RR^n$, write
\[
\partial^p(f) =
\left\{
\PDn{\alpha}{f_i}{x}
\right\}_{(i,\alpha)\in\{1,\ldots,k\}\times\NN^{n}_{\leq p}},
\]
where $\NN^{n}_{\leq p} = \{\alpha\in\NN^n : |\alpha| \leq p\}$.  Note that $\partial^p(f)$ is a family of functions, not a set of functions.  (Families are indexed; sets are not.)  For each co-c.e.\ compact set $A\subseteq\RR^n$, let
\[
C^p(A,\RR^k) = \{\partial^p(f) : \text{$U \supseteq A$ is c.e.\ open in $\RR^n$ and $f:U\to\RR^k$ is computably $C^p$}\}.
\]
Note that by Lemma \ref{lemma:compUnifCont}, $\partial^p(f)$ is a family of functions which are all computably uniformly continuous on $A$.
Also note that $C^p(A,\RR^k)$ is a vector space over the field of computable reals.  We equip $C^p(A,\RR^k)$ with the norm
\[
\left\|\partial^p(f)\right\|
=
\max\left\{\left|\PDn{\alpha}{f_i}{x}(x)\right| : x\in A, i\in\{1,\ldots,k\}, \alpha\in\NN^{n}_{\leq p}\right\}.
\]
Note that by Lemma \ref{lemma:compEVT}, we can compute $\|\partial^p(f)\|$ using a $C^p$ approximation algorithm for $f$.

If $B\subseteq\RR^m$ is also co-c.e.\ compact, define
\[
C^p(A,\RR^k) \times C^p(B,\RR^l) = \{\partial^p(f,g) : \partial^p(f) \in C^p(A,\RR^k), \partial^p(g) \in C^p(B,\RR^l)\},
\]
which is a certain subspace of $C^p(A\times B, \RR^{k+l})$.  For any set $Y\subseteq \RR^k$, let
\[
C^p(A,Y) = \{\partial^p(f)\in C^p(A,\RR^k) : f(A)\subseteq Y\},
\]
which is a metric space, but not a subspace of $C^p(A,\RR^k)$.

A set $\F\subseteq C^p(A,\RR^k)$ is {\bf c.e.\ open} in $C^p(A,\RR^k)$ if there is an algorithm acting as follows:
\begin{equation}\label{eq:compOpen}
\left\{\text{\parbox{5.8in}{
Given a $C^p$ approximation algorithm for some computably $C^p$ function $f:U\to\RR^k$ defined on a c.e.\ open neighborhood of $A$,\\
\hspace*{10pt} 1. the algorithm stops if and only if $\partial^p(f)\in\F$;\\
\hspace*{10pt} 2. if the algorithm stops, it returns a number $\epsilon\in\QQ_+$
                  such that for all \\
\hspace*{25pt}    $\partial^p(\tld{f})\in C^p(A,\RR^k)$, if
                  $\|\partial^p(f) - \partial^p(\tld{f})\| < \epsilon$, then $\partial^p(\tld{f})\in\F$.
}}
\right.
\end{equation}
If $\F\subseteq C^p(A,\RR^k)$ is c.e.\ open in $C^p(A,\RR^k)$, a map $\psi:\F \to C^q(B,\RR^l)$ is {\bf computably continuous} if there is an algorithm acting as follows:
\begin{quote}
Given a $C^p$ approximation algorithm for some $f$ with $\partial^p(f)\in \F$, and given $\epsilon\in\QQ_+$, the algorithm returns a number $\delta\in\QQ_+$ such that for all $\tld{f}\in \F$, if $\|\partial^p(f) - \partial^p(\tld{f})\| < \delta$, then $\|\partial^q(\psi(f)) - \partial^q(\psi(\tld{f}))\| < \epsilon$.
\end{quote}
Note that since $\F$ is assumed to be c.e.\ open in $C^p(A,\RR^k)$, the number $\delta$ can always be chosen so that for all $\partial^p(\tld{f})\in C^p(A,\RR^k)$, if $\|\partial^p(f) - \partial^p(\tld{f})\| < \delta$, then $\partial^p(\tld{f})\in\F$ and $\|\partial^q(\psi(f)) - \partial^q(\psi(\tld{f}))\| < \epsilon$.
\end{definition}

\begin{remarks}
The following facts are easy to verify.
\begin{enumerate}{\setlength{\itemsep}{3pt}
\item
The c.e.\ open subsets of $C^p(A,\RR^k)$ form a computable topology on  $C^p(A,\RR^k)$, in the following sense:
\begin{enumerate}
\item
The sets $\emptyset$ and $C^p(A,\RR^k)$ are c.e.\ open.

\item
If $\{\F_i\}_{i\in I}$ is a computable family of c.e.\ open subsets of $C^p(A,\RR^k)$ (meaning that there is a single algorithm which acts as \eqref{eq:compOpen} for each $i\in I$), then $\bigcup_{i\in I}\F_i$ is c.e.\ open.

\item
The intersection of finitely many c.e.\ open subsets of $C^p(A,\RR^k)$ is c.e.\ open.
\end{enumerate}

\item
If $\F\subseteq C^p(A,\RR^k)$ and $\G\subseteq C^q(B,\RR^l)$ are c.e.\ open in their respective spaces, and if $\psi:\F \to C^q(B,\RR^l)$ is computably continuous, then $\psi^{-1}(\G)$ is c.e.\ open in $C^p(A,\RR^k)$.
}\end{enumerate}
\end{remarks}

\begin{notation}\label{eq:norms}
For any $x = (x_1,\ldots,x_n)\in\RR^n$, write
\begin{eqnarray*}
|x|
    & = &
    \sqrt{x_{1}^{2} + \cdots + x_{n}^{2}}, \\
\|x\|
    & = &
    \max\{|x_1|,\ldots,|x_n|\},
\end{eqnarray*}
and note that
\begin{equation}\label{eq:norms}
\frac{|x|}{\sqrt{n}} \leq \|x\| \leq |x|.
\end{equation}
\end{notation}

\begin{lemma}\label{lemma:contArith}
The maps $+,\cdot : C^0(A,\RR^k)\times C^0(A,\RR^k) \to C^0(A,\RR^k)$ defined by $(f,g)\mapsto f+g$ and $(f,g)\mapsto fg$ are computably continuous.  For each $r \in\QQ_+$, the map from $\{f\in C^0(A,\RR) : \text{$|f(x)| > r$ for all $x\in A$}\}$ into $C(A,\RR)$ given by $f\mapsto 1/f$ is computably continuous.
\end{lemma}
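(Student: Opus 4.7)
The plan is to verify that each of the three maps meets the definition of computable continuity given in Definition \ref{def:norm}, by extracting suitable rational $\delta$'s from straightforward arithmetic identities and sup-norm estimates; the rational quantities needed will be produced effectively from the supplied approximation algorithms via Lemma \ref{lemma:compEVT}.

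First I would handle addition, where the triangle inequality alone gives $\|(f+g)-(\tilde f+\tilde g)\|\le \|f-\tilde f\|+\|g-\tilde g\|$, so $\delta=\epsilon/2$ works without any appeal to the input data. For multiplication (componentwise, so that $\|fg\|\le\|f\|\|g\|$), I would use the identity $\tilde f\tilde g-fg=\tilde f(\tilde g-g)+(\tilde f-f)g$ to obtain $\|\tilde f\tilde g-fg\|\le(\|f\|+\delta)\delta+\|g\|\delta$ whenever $\max\{\|\tilde f-f\|,\|\tilde g-g\|\}<\delta$. By Lemma \ref{lemma:compEVT}, $\|f\|$ and $\|g\|$ are computable reals, so given $C^0$ approximation algorithms for $f$ and $g$ I can effectively produce rational upper bounds $M_f>\|f\|$ and $M_g>\|g\|$; then setting $\delta:=\min\{1,\,\epsilon/(2(M_f+M_g+1))\}\in\QQ_+$ does the job, since the bound above is then majorized by $(M_f+M_g+1)\delta\le \epsilon/2$.

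For $f\mapsto 1/f$, I would first verify that the domain $\F_r:=\{f\in C^0(A,\RR):|f(x)|>r\text{ for all }x\in A\}$ is c.e.\ open in $C^0(A,\RR)$ so that the statement is even well-posed. By Lemma \ref{lemma:compEVT} the value $m:=\min\{|f(x)|:x\in A\}$ is a computable real, and a search over rationals for some $s\in\QQ$ with $r<s<m$ halts precisely when $m>r$; the witness $s-r>0$ then certifies c.e.\ openness in the sense of \eqref{eq:compOpen}, since $\|\tilde f-f\|<s-r$ forces $|\tilde f|>r$ on $A$. Once such an $s$ is in hand, the identity $1/\tilde f-1/f=(f-\tilde f)/(f\tilde f)$ combined with the bound $|\tilde f|>(s+r)/2>r$ for $\|\tilde f-f\|<(s-r)/2$ yields $\|1/\tilde f-1/f\|\le\|\tilde f-f\|/r^2$, so $\delta:=\min\{(s-r)/2,\,\epsilon r^2/2\}\in\QQ_+$ suffices.

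I do not foresee any substantive obstacle; each assertion reduces to a standard $\epsilon$-$\delta$ estimate, and the only care required is to ensure that the rational witnesses $\delta$ are produced effectively from the input approximation algorithms, which is exactly what Lemma \ref{lemma:compEVT} provides. The mildest subtlety is the verification that $\F_r$ is c.e.\ open, which is why I would treat that c.e.\ openness step before turning to the continuity estimate for $1/f$.
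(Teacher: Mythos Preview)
Your proposal is correct and is exactly the kind of routine $\epsilon$--$\delta$ verification the paper has in mind; the paper's own proof consists of the single sentence ``This is straightforward,'' so you have simply written out what the author left implicit, including the c.e.\ openness of $\F_r$ that the definition of computable continuity presupposes.
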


\begin{proof}
This is straightforward.
\end{proof}

\begin{lemma}\label{lemma:contComp}
The map $\circ : C^p(B,\RR^k) \times C^p(A,B) \to C^p(A,\RR^k)$ given by $(\partial^p(f),\partial^p(g)) \mapsto \partial^p(f\circ g)$ is computably continuous.
\end{lemma}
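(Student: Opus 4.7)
The plan is to reduce the $C^p$ statement to the $C^0$ case by means of the iterated chain rule, then bound each piece of the resulting expansion using Lemmas \ref{lemma:compUnifCont}, \ref{lemma:compEVT}, and \ref{lemma:contArith}. Concretely, for any multi-index $\alpha$ with $|\alpha|\leq p$ and any coordinate $l$, Faà di Bruno's formula expresses $\partial^{\alpha}(f\circ g)_l$ as a $\ZZ$-polynomial (depending only on $\alpha$) in the quantities $(\partial^{\beta}f_{l'})\circ g$ with $|\beta|\leq p$ and $\partial^{\gamma}g_j$ with $|\gamma|\leq p$. It therefore suffices to show that each of these ``atomic'' quantities depends computably continuously on $(\partial^{p}(f),\partial^{p}(g))$ in the $C^{0}(A,\RR)$ norm; the full computable continuity in the $C^{p}(A,\RR^{k})$ norm then follows from Lemma \ref{lemma:contArith}, since only finitely many such atoms appear and the polynomial coefficients are integers.

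The atoms $\partial^{\gamma}g_j$ are trivial: $|\partial^{\gamma}g_j(x)-\partial^{\gamma}\tilde{g}_j(x)|\leq \|\partial^{p}(g)-\partial^{p}(\tilde g)\|$ for every $x\in A$. For the atoms $(\partial^{\beta}f_{l'})\circ g$, I use the standard splitting
\[
(\partial^{\beta}f_{l'})(g(x)) - (\partial^{\beta}\tilde f_{l'})(\tilde g(x))
  = \bigl[(\partial^{\beta}f_{l'})(g(x)) - (\partial^{\beta}f_{l'})(\tilde g(x))\bigr]
  + \bigl[(\partial^{\beta}f_{l'})(\tilde g(x)) - (\partial^{\beta}\tilde f_{l'})(\tilde g(x))\bigr].
\]
Since $\tilde{g}\in C^{p}(A,B)$ by hypothesis, both $g(x)$ and $\tilde g(x)$ lie in $B$, so the second bracket is bounded by $\|\partial^{p}(f)-\partial^{p}(\tilde f)\|$ (the $B$-norm). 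The first bracket is controlled by the computable modulus of uniform continuity of $\partial^{\beta}f_{l'}$ on $B$ produced by Lemma \ref{lemma:compUnifCont}, applied to the input $|g(x)-\tilde g(x)|\leq \|\partial^{p}(g)-\partial^{p}(\tilde g)\|$.

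Combining, the algorithm takes as input $C^{p}$ approximation algorithms for $f$ and $g$ and a target $\epsilon\in\QQ_{+}$; it enumerates the finitely many Faà di Bruno terms up to order $p$, computes the norms $\|\partial^{p}(f)\|$ and $\|\partial^{p}(g)\|$ (which occur as multiplicative bounds in the Leibniz expansions and are computable reals by Lemma \ref{lemma:compEVT}), extracts a common modulus of uniform continuity for all partials of $f$ of order $\leq p$ on $B$, and then mechanically propagates these bounds through the polynomial identity via Lemma \ref{lemma:contArith} to produce the required $\delta\in\QQ_{+}$. The only real obstacle is the combinatorial bookkeeping in Faà di Bruno, but because $p$ is a finite input and the domain restriction $\tilde g(A)\subseteq B$ built into the space $C^{p}(A,B)$ keeps every evaluation inside the co-c.e.\ compact set $B$, this bookkeeping is routine rather than delicate.
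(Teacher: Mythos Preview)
Your proposal is correct and takes essentially the same approach as the paper: the paper first isolates the $p=0$ case using exactly your triangle-inequality splitting together with the (computable) uniform continuity of $f$ on $B$, and then deduces the general case from the chain rule and Lemma~\ref{lemma:contArith}. Your write-up simply unpacks this reduction explicitly via Fa\`a di Bruno rather than stating the $p=0$ case as a separate step, but the argument is the same.
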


\begin{proof}
First suppose that $p=0$.  We write $x$ and $y$ for coordinates on the ambient Euclidean spaces containing $A$ and $B$, respectively.  Fix $(f,g) \in C^0(B,\RR^k) \times C^0(A,B)$ and $\epsilon\in\QQ_+$.  We can find $\delta'\in\QQ_+$ such that for all $y,\tld{y}\in B$, if $\|y-\tld{y}\| < \delta'$, then $\|f(y) - f(\tld{y})\| < \frac{\epsilon}{2}$.  Put $\delta = \min\{\delta', \frac{\epsilon}{2}\}$.  Then for all $(\tld{f},\tld{g}) \in C^0(B,\RR^k) \times C^0(A,B)$ with $\|(f,g) - (\tld{f},\tld{g})\| < \delta$ and all $x\in A$,
\[
\|f(g(x)) - \tld{f}(\tld{g}(x))\|
\leq
\|f(g(x)) - f(\tld{g}(x))\| + \|f(\tld{g}(x)) - \tld{f}(\tld{g}(x))\|
<
\epsilon,
\]
so $\|f\circ g - \tld{f}\circ\tld{g}\| < \epsilon$ (the inequality is strict by the extreme value theorem).  This proves the lemma when $p=0$.  The general case follows from the special case of $p=0$, the chain rule, and Lemma \ref{lemma:contArith}.
\end{proof}

Consider $r\in(0,+\infty)^m$, $s\in(0,+\infty)^n$, and a $C^1$ function $f=(f_1,\ldots,f_n):U\to\RR^n$ defined on an open set $U\subseteq\RR^{m+n}$ containing $[-r,r]\times[-s,s]$.  Write $x = (x_1,\ldots,x_m)$ and $y = (y_1,\ldots,y_n)$ for coordinates on $\RR^m$ and $\RR^n$, respectively.  In \cite[Section 3]{DJMcharDec}, a statement $\IF(f;r,s)$ was defined with the following four properties:
\begin{enumerate}{\setlength{\itemsep}{5pt}
\item[IF1.]
If $\IF(f;r,s)$ holds, then
\begin{enumerate}{\setlength{\itemsep}{3pt}
\item
the set
\begin{equation}\label{eq:IFgraph}
\{(x,y)\in[-r,r]\times[-s,s] : f(x,y) = 0\}
\end{equation}
is the graph of a $C^1$ function from $[-r,r]$ into $(-s,s)$;

\item
$\det\PD{}{f}{y}\neq 0$ on \eqref{eq:IFgraph}.
}\end{enumerate}

\item[IF2.]
Suppose $\IF(f;r,s)$ holds.  Then there exists an open box $A\subseteq(0,+\infty)^{m+n}$ containing $(r,s)$ such that $\IF(f;u,v)$ holds for all $(u,v)\in A$.  Moreover, for any open box $V\subseteq U$ containing the origin and any open box $B\subseteq(0,+\infty)^{m+n}$ containing $(r,s)$ such that $V+B\subseteq A$,  the statement $\IF(f_{(a,b)}; u,v)$ holds for all $(a,b)\in V$ and all $(u,v)\in B$, where $f_{(a,b)}(x,y) = f(x+a,y+b)$.\vspace*{3pt}

(Note that $V+B\subseteq A$ will hold for all sufficiently small $V$ and $B$.)

\item[IF3.]
If $f(0) = 0$ and $\det\PD{}{f}{y}(0)\neq 0$, then there exists $(r,s)\in\QQ_{+}^{m}\times\QQ_{+}^{n}$ such that $\IF(f;r,s)$ holds.

\item[IF4.]
If $f$ is computably $C^1$, $r\in\QQ_{+}^{m}$, $s\in\QQ_{+}^{n}$, and $\IF(f;r,s)$ holds, then we can effectively verify that $\IF(f;r,s)$ holds.  In other words, there is an algorithm which acts as follows:
\begin{quote}
Given a $C^1$ approximation algorithm for $f$ and $(r,s)\in\QQ_{+}^{m}\times\QQ_{+}^{n}$, the algorithm stops if and only if $[-r,r]\times[-s,s]\subseteq U$ and $\IF(f;r,s)$ holds.
\end{quote}
}
\end{enumerate}

In \cite{DJMcharDec} the definition of $\IF(f;r,s)$ and these four properties were proven simultaneously by induction on $n$.  We will not repeat all of that here, but will review the definition of $\IF(f;r,s)$ and the proof of IF1(a), for both will be used to prove Proposition \ref{prop:contIFT} below.

\begin{definition}\label{def:IF}
Define the statement $\IF(f;r,s)$ inductively as follows:
\begin{enumerate}{\setlength{\itemsep}{5pt}
\item[]\emph{Bases Case}: $n=1$.

Then $\IF(f;r,s)$ means that there exists $\sigma\in\{-1,1\}$ such that
$\sigma\cdot\PD{}{f}{y}(x,y)> 0$ for all $(x,y)\in[-r,r]\times[-s,s]$, and such that $\sigma\cdot f(x,-s) < 0 < \sigma\cdot f(x,s)$ for all $x\in[-r,r]$.

\item[]\emph{Inductive Step}: $n > 1$.

Then $\IF(f;r,s)$ means that there exist  $i,j\in\{1,\ldots,n\}$ such that, if we write
\begin{eqnarray*}
y' & = & (y_1, \ldots, y_{j-1}, y_{j+1}, \ldots, y_n), \\
s' & = & (s_1, \ldots, s_{j-1}, s_{j+1}, \ldots, s_n), \\
f' & = & (f_1, \ldots, f_{i-1}, f_{i+1}, \ldots, f_n),
\end{eqnarray*}
then $\IF(f_i;(r,s'), s_j)$ and $\IF(f'\circ H;r,s')$ both hold, where $H$ is defined as follows:
\begin{quote}
Since $\IF(f_i;(r,s'), s_j)$ holds, property IF2 from the base case of the induction shows that there exist tuples $R > r$ and $S' > s'$ such that $\IF(f_i;(R,S'),s_j)$ holds.  Let $U' = (-R,R)\times(-S',S')$, and let $h:U'\to(-s_j,s_j)$ be the $C^1$ function whose graph is the set
\[
\{(x,y',y_j)\in U'\times[-s_j,s_j] :f_i(x,y) = 0\}.
\]
Define $H:U'\to\RR^m\times\RR^n$ by
\begin{equation}\label{eq:H}
H(x,y') = (x,y_1,\ldots,y_{j-1},h(x,y'),y_{j+1},\ldots,y_n).
\end{equation}
\end{quote}
}\end{enumerate}
\end{definition}

We will also have use for the function $H':U'\to\RR^n$ defined by
\[
H'(x,y') = (y_1,\ldots,y_{j-1},h(x,y'),y_{j+1},\ldots,y_n).
\]

\begin{proof}[Proof of IF1(a)]
Assume $\IF(f;r,s)$ holds.  First suppose that $n = 1$.  The intermediate value and increasing function theorems show that \eqref{eq:IFgraph} is the graph of a function from $[-r,r]$ into $(-s,s)$.  The implicit function theorem shows that this function is $C^1$.

Now suppose that $n > 1$, and assume (IF1) holds for any $k\in\{1,\ldots,n-1\}$ in place of $n$.  We use the notation from the inductive step of Definition \ref{def:IF}.  The base case shows that $h$ is $C^1$, so $f'\circ H$ is $C^1$.  Since $\IF(f'\circ H;r,s')$ holds, the induction hypothesis shows that $\{(x,y') \in [-r,r]\times[-s',s'] : f'\circ H(x,y') = 0\}$ is the graph of a $C^1$ function $g':[-r,r]\to(-s',s')$.  The function $g:[-r,r]\to(-s,s)$ defined by $g(x) = H'\circ g'(x)$ is $C^1$, and $\{(x,y)\in[-r,r]\times[-s,s] : f(x,y) = 0\}$ is the graph of $g$.
\end{proof}

\begin{proposition}\label{prop:contIFT}
Fix an integer $p > 0$, $r\in\QQ_{+}^{m}$, and $s\in\QQ_{+}^{n}$.  Define
\[
\F^{p}_{(r;s)} = \{\partial^p(f) \in C^p([-r,r]\times[-s,s],\RR^n) : \text{$\IF(f;r,s)$ holds}\}.
\]
\begin{enumerate}{\setlength{\itemsep}{3pt}
\item
The set $\F^{p}_{(r;s)}$ is c.e.\ open in $C^p([-r,r]\times[-s,s],\RR^n)$.

\item
Consider the map $\IF: \F^{p}_{(r;s)} \to C^p([-r,r], (-s,s))$ sending each $\partial^p(f) \in \F^{p}_{(r;s)}$ to $\partial^p(f_{\IF})$, where $f_{\IF}:[-r,r]\to(-s,s)$ is implicitly defined by $f(x,f_{\IF}(x)) = 0$ on $[-r,r]$.  This map is computably continuous.
}\end{enumerate}
\end{proposition}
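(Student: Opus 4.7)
The plan is to prove parts (1) and (2) simultaneously by induction on $n$, following the inductive structure of Definition \ref{def:IF}.

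\textbf{Base case $n=1$.} Both statements reduce to strict inequalities on computable reals furnished by Lemma \ref{lemma:compEVT}. For each $\sigma\in\{-1,1\}$, the quantities $\mu_1(\sigma) = \min_{[-r,r]\times[-s,s]}\sigma\PD{}{f}{y}$, $\mu_2(\sigma) = \max_{[-r,r]}\sigma f(\cdot,-s)$, and $\mu_3(\sigma) = \min_{[-r,r]}\sigma f(\cdot,s)$ are computable reals (effectively from a $C^p$ approximation algorithm for $f$), and $\IF(f;r,s)$ holds iff some $\sigma$ satisfies $\mu_1(\sigma)>0$, $\mu_2(\sigma)<0$, $\mu_3(\sigma)>0$. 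This is a c.e.\ condition; when it is witnessed, any $\tld f$ with $\|\partial^p(f) - \partial^p(\tld f)\| < \min\{\mu_1,|\mu_2|,\mu_3\}$ inherits the same witness, proving (1). For (2), the uniform bound $|\PD{}{f}{y}|\geq \mu_1$ combined with the defining identity $f(x,f_{\IF}(x))=0$ yields a mean-value estimate controlling $|f_{\IF} - \tld f_{\IF}|$ linearly in $\|\partial^p(f) - \partial^p(\tld f)\|$; the higher derivatives of $f_{\IF}$ are polynomial expressions in partial derivatives of $f$ along the graph, divided by powers of $\PD{}{f}{y}$, and iterating Lemmas \ref{lemma:contArith} and \ref{lemma:contComp} on these expressions yields the required computable continuity of $\IF$.

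\textbf{Inductive step $n>1$.} Assume the proposition for all smaller values of $n$. Given $f$, time-share a search over pairs $(i,j)\in\{1,\ldots,n\}^2$; for each candidate pair, apply the base case to $f_i$ to verify $\IF(f_i;(r,s'),s_j)$ (yielding a persistence radius $\epsilon_1$) and to produce $h=(f_i)_{\IF}$ as a computably continuous function of $f_i$, hence of $f$. Then $H$ and $H'$ are formed from $h$ by inserting a single coordinate (trivially computably continuous), and Lemma \ref{lemma:contComp} gives $f'\circ H$ as a computably continuous function of $(f',h)$, hence of $f$. Apply the inductive hypothesis to $f'\circ H \in C^p([-r,r]\times[-s',s'],\RR^{n-1})$ to verify $\IF(f'\circ H;r,s')$, yielding a second persistence radius $\epsilon_2$ together with, by the inductive hypothesis (2), a computably continuous dependence of $g'=(f'\circ H)_{\IF}$ on $f'\circ H$. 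Composing the moduli of continuity produced at each stage gives a single $\epsilon$ such that every $\tld f$ with $\|\partial^p(f)-\partial^p(\tld f)\| < \epsilon$ still satisfies $\IF(\tld f;r,s)$ via the same $(i,j)$, establishing (1); a final application of Lemma \ref{lemma:contComp} to $f_{\IF} = H'\circ g'$ delivers part (2).

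The main obstacle is essentially bookkeeping: tracking how a perturbation of $f$ propagates through the cascade $f \to h \to H \to f'\circ H \to g' \to f_{\IF}=H'\circ g'$ while verifying at each stage that the intermediate object lies in the c.e.\ open domain on which the next computably continuous map is defined. This is a compounding of moduli of continuity rather than a new analytic idea; the essential content lies in the base case, where the scalar implicit function theorem is quantified using Lemma \ref{lemma:compEVT}. One minor technicality is that the open box $(-R,R)\times(-S',S')$ on which $h$ is defined in Definition \ref{def:IF} must be chosen with explicit rational slack in IF2 of the base case, so that the domain of $f'\circ H$ contains $[-r,r]\times[-s',s']$ uniformly for all nearby $\tld f$.
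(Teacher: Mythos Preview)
Your proposal is correct and follows essentially the same inductive scheme as the paper's proof: the base case is handled by the strict-inequality characterization of $\IF(f;r,s)$ together with a mean-value estimate for $f_{\IF}$ and implicit differentiation for its higher derivatives, and the inductive step decomposes $\F^{p}_{(r;s)}$ over pairs $(i,j)$ and expresses $f_{\IF}=H'\circ(f'\circ H)_{\IF}$ as a composite of computably continuous maps via Lemmas \ref{lemma:contArith} and \ref{lemma:contComp}. The only cosmetic difference is that the paper invokes property IF4 directly for the stopping half of c.e.\ openness in the base case, whereas you spell this out via Lemma \ref{lemma:compEVT}; the technicality you flag about choosing $(R,S')$ with rational slack is real but, as you note, minor.
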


\begin{proof}
The proof is by induction on $n$.  First suppose that $n=1$.   We first show that $\F^{1}_{(r;s)}$ is c.e.\ open in $C^1([-r,r]\times[-s,s],\RR)$ and that the map $\F_{(r;s)}^{1} \to C^0([-r,r],(-s,s)) : \partial^1(f) \mapsto f_{\IF}$ is computably continuous.  Fix $\partial^1(f) \in \F^{1}_{(r;s)}$.  Property IF4 shows that we can effectively verify that $\partial^1(f)\in\F^{1}_{(r;s)}$.   We can effectively find $\sigma\in\{-1,1\}$ and $a\in\QQ_+$ such that $\sigma \PD{}{f}{y} > a$ on $[-r,r]\times[-s,s]$, and such that $\sigma f(x,-s) < -a$ and $\sigma f(x,s) > a$ for all $x\in[-r,r]$.  The neighborhood of $\partial^1(f)$ in $C^0([-r,r]\times[-s,s],\RR)$ of radius $\frac{a}{2}$ is contained in $\F^{1}_{(r;s)}$, so $\F^{1}_{(r;s)}$ is c.e.\ open.  Now, suppose we are given $\epsilon\in\QQ_+$.  Put
\[
\delta = \min\left\{\frac{a}{2}, a\epsilon\right\}.
\]
Consider $\partial^1(\tld{f})\in C^1(U,\RR^n)$ with $\|\partial^1(f) - \partial^1(\tld{f})\| < \delta$.  Let $b>0$ be such that $\sigma \PD{}{f}{y} < b$ on $[-r,r]\times[-s,s]$.  For any $(x,y)\in[-r,r]\times[-s,s]$,
\[
\sigma f(x,y) = \sigma(f(x,y) - f(x,f_{\IF}(x))) = \sigma\PD{}{f}{y}(x,\xi(x))(y-f_{\IF}(x))
\]
for some $\xi(x)$ between $y$ and $f_{\IF}(x)$.  It follows that
\[
a(y-f_{\IF}(x)) < \sigma f(x,y) < b(y-f_{\IF}(x))
\]
on $[-r,r]\times[-s,s]$.  Writing $\sigma\tld{f} = \sigma\tld{f} - \sigma f + \sigma f$ and applying the previous bound shows that
\[
a(y-f_{\IF}(x)) - \|f-\tld{f}\| < \sigma\tld{f}(x,y) < b(y-f_{\IF}(x)) + \|f-\tld{f}\|
\]
on $[-r,r]\times[-s,s]$.  Therefore, if $y - f_{\IF}(x) \leq -\|f-\tld{f}\|/b$ we have $\sigma\tld{f}(x,y)  < 0$, and if $y - f_{\IF}(x) \geq \|f-\tld{f}\|/a$ we have $\sigma \tld{f}(x,y) > 0$.  Since $\tld{f}(x,\tld{f}_{\IF}(x)) = 0$, it follows that
\[
-\epsilon
    \leq  -\frac{\delta}{a}
    < -\frac{\|f-\tld{f}\|}{b}
    < \tld{f}_{\IF}(x) - f_{\IF}(x)
    < \frac{\|f-\tld{f}\|}{a}
    \leq \frac{\delta}{a}
    \leq \epsilon,
\]
so $\|\tld{f}_{\IF}- f_{\IF}\| < \epsilon$.  This shows that the map $\partial^1(f) \mapsto f_{\IF}$ is computably continuous.

Now, for any positive integer $p$, the set $\F^{p}_{(r;s)}$ is c.e.\ open in $C^p([-r,r]\times[-s,s],\RR)$ since it is the inverse image of $\F^{1}_{(r;s)}$ under the computably continuous map $C^p(A,\RR^k)\to C^1(A,\RR^k): \partial^p(f)\mapsto \partial^1(f)$.  To see that $\IF: \F^{p}_{(r;s)} \to C^p([-r,r], (-s,s))$ is computably continuous, implicitly differentiate $f(x,f_{\IF}(x)) = 0$ repeatedly, and apply Lemmas \ref{lemma:contArith} and \ref{lemma:contComp}.  This completes the base case of the induction.

Now suppose that $n > 1$.  We first show that $\F^{p}_{(r;s)}$ is c.e.\ open in $C^p([-r,r]\times[-s,s],\RR^n)$.  By the same reasoning as in the base case, it suffices to show that $\F^{1}_{(r;s)}$ is c.e.\ open in $C^1([-r,r]\times[-s,s],\RR^n)$.  We use the notation $i,j,f_i,f',r,s',s_j$ from the inductive step of Definition \ref{def:IF}.  For each choice of $i,j\in\{1,\ldots,n\}$, let $\F_{(r;s)}^{1}(i,j)$ be the set of all $\partial^1(f)\in C^1([-r,r]\times[-s,s],\RR^n)$ such that $\IF(f_i;(r,s'),j)$ and $\IF(f'\circ H;r,s')$ both hold, where
\[
H(x,y') = (x,y_1,\ldots,y_{j-1},(f_i)_{\IF}(x,y'),y_{j+1},\ldots,y_n).
\]
Thus $\F^{1}_{(r;s)}(i,j)$ is the intersection of the inverse images of the sets $\F^{1}_{(r,s';s_j)}$ and $\F^{1}_{(r;s')}$ under the maps $\partial^1(f)\mapsto \partial^1(f_i)$ and $\partial^1(f) \mapsto \partial^1(f'\circ H)$.  The sets $\F^{1}_{(r,s';s_j)}$ and $\F^{1}_{(r;s')}$ are both c.e.\ open by the induction hypothesis.  The map $\partial^1(f)\mapsto \partial^1(f_i)$ is clearly computably continuous, and the map $\partial^1(f) \mapsto \partial^1(f'\circ H)$ is also  computably continuous since it can be expressed as the composition of the two computably continuous maps $\partial^1(f) \mapsto \partial^1(f', (f_i)_{\IF})$ and $\partial^{1}(f',h)\mapsto \partial^1(f'\circ H)$, with $H$ defined from $h$ as in \eqref{eq:H}.  Therefore $\F^{1}_{(r;s)}(i,j)$ is c.e.\ open, and hence so is $\F^{1}_{(r;s)} = \bigcup_{i,j=1}^{n} \F^{1}_{(r;s)}(i,j)$.

We now show that the map $\IF:\F_{(r;s)}^{p} \to C^p([-r,r],(-s,s))$ is computably continuous.  Fix $i,j\in\{1,\ldots,n\}$.  It suffices to show that the restriction of this map to $\F^{p}_{(r;s)}(i,j)$ is computably continuous.  We now follow closely the inductive step in the proof of property (IF1).  The base case and Lemma \ref{lemma:contComp} imply that the map $\F^{p}_{(r;s)} \to \F^{p}_{(r;s')} : \partial^p(f) \mapsto \partial^p(f'\circ H)$ is computably continuous, and the induction hypothesis shows that the map $\F^{p}_{(r;s')} \to C^p([-r,r],(-s',s')) : \partial^1(G') \mapsto\partial^1(G'_{\IF})$ is computably continuous.  So Lemma \ref{lemma:contComp} shows that $\F^{p}_{(r;s)} \to C^p([-r,r],(-s',s')) : \partial^p(f) \mapsto \partial^p((f'\circ H)_{\IF})$ is computably continuous, and hence so is the map $\F^{p}_{(r;s)} \to C^p([-r,r],(-s,s)) : \partial^p(f) \mapsto \partial^p(H'\circ(f'\circ H)_{\IF})$, where
$
H'(x,y') = (y_1,\ldots,y_{j-1},(f_i)_{\IF}(x,y'),y_{j+1},\ldots,y_n).
$
We are done since $f_{\IF} = H'\circ(f'\circ H)_{\IF}$.
\end{proof}

\begin{definition}\label{def:ratBoxManifold}
A set $D\subseteq\RR^m$ is a called a {\bf rational box manifold} if it is a rational box and a submanifold of $\RR^m$.
\end{definition}

Thus $D\subseteq\RR^m$ is a rational box manifold if and only if there exist $E\subseteq\{1,\ldots,m\}$, an open rational box  $U\subseteq\RR^E$, and a point $u\in\QQ^{E^c}$ such that $D = U\times\{u\}$.  Note that $\dim(D) = |E|$.

Instead of working with $\IF(f;r,s)$ directly, we will work with the following variant which was defined in \cite[Section 2]{DJMcharDec}.

\begin{definition}\label{def:IFsection}
Consider the following given data:
\begin{itemize}{\setlength{\itemsep}{3pt}
\item
a rational box manifold $D\subseteq\RR^m$,

\item
$d\in\{0,\ldots,\dim(D)\}$,

\item
a $C^1$ function $P:D\to\RR^{\dim(D)-d}$,

\item
a bounded rational box manifold $C$ which is open in $D$ with $\cl(C) \subseteq D$,

\item
an injection $\lambda:\{1,\ldots,d\}\to\{1,\ldots,m\}$ such that $\Pi_\lambda(D)$ is open in $\RR^d$.
}\end{itemize}
Define the statement $\IF_\lambda(P;B)$ as follows:
\begin{quote}
Write $D = U\times\{u\}$ for a set $E\subseteq\{1,\ldots,m\}$, open rational box  $U\subseteq\RR^E$, and $u\in\QQ^{E^c}$.  Write $C = (c-R,c+R)\times\{u\}$ for some $c\in\QQ^E$ and $R\in\QQ_{+}^{E}$ such that $[c-R,c+R] \subseteq U$.  Define $T_c:\RR^E \to \RR^m$ by $T_c(x_E) = (x_E+c,u)$.  Note that $\im(\lambda)\subseteq E$.  Extend $\lambda$ to a bijection $\sigma:\{1,\ldots,\dim(M)\}\to E$, and write $\Pi_\sigma(R) = (r,s)$, where $r\in\QQ_{+}^{d}$ and $s\in\QQ_{+}^{\dim(M)-d}$.
The statement $\IF_\lambda(P;C)$ means $\IF(P\circ T_c\circ \Pi_{\sigma}^{-1}; r,s)$.
\end{quote}
\end{definition}

\begin{remarks}\label{rmk:IFsection}
Consider the situation of Definition \ref{def:IFsection}.  Put $V = (c-R,c+R)$, and define $\lambda':\{1,\ldots,\dim(D)-d\}\to\{1,\ldots,m\}$ by $\lambda'(i) = \sigma(i+d)$ for each $i$.  It follows easily from (IF1)-(IF4) that the statement $\IF_\lambda(P;C)$ has the following properties:
\begin{enumerate}{\setlength{\itemsep}{3pt}
\item
There exists a $C^1$ section $\varphi:\Pi_\lambda(C)\to C$ of the projection $\Pi_\lambda:\RR^m\to\RR^d$ such that
\[
\im(\varphi) = \{x\in C : P(x) = 0\},
\]
and $\det\PD{}{P}{x_{\lambda'}}(x)\neq 0$ for all $x\in\im(\varphi)$.

\item
There exists $\epsilon > 0$ such that for all $C' = V'\times\{u\}$, where $V'\subseteq\RR^E$ is a bounded open rational box, if $\bd(V') \subseteq \{x\in\RR^E : \dist(x,\bd(V)) < \epsilon\}$, then $\cl(V') \subseteq U$ and $\IF_\lambda(f;C')$ holds.

\item
If $a\in D$ is such that $f(a) = 0$ and $\det\PD{}{f}{x_{\lambda'}}(a)\neq 0$, then there exists a rational box manifold $A$ such that $a\in A$, $A$ in open in $D$, $\cl(A)\subseteq D$, and $\IF_\lambda(P,A)$ holds.

\item
If $f$ is computably $C^1$ and $\IF_\lambda(f;C)$ holds, then we can effectively verify that $\IF_\lambda(f;C)$ holds.
}\end{enumerate}
The first remark states that when $\IF_\lambda(P;C)$ holds, the equation $P(x) = 0$ defines the set $\im(\varphi)$ nonsingularly as a subset of $D = U\times\{u\}$.  Of course, $\Pi_{E^c}(\im(\varphi)) = \{u\}$, so the system of equations $P(x_E,u) = 0$ and $x_{E^c} - u = 0$ define $\im(\varphi)$ nonsingularly as a subset of $U\times\RR^{E^c}$.  Namely,
\[
\im(\varphi) = \{x\in\RR^n : x_E\in U, P(x_E,u) = 0, x_{E^c} - u = 0\},
\]
and
\[
\det\PD{}{(P(x_E,u), x_{E^c} - u)}{(x_{\lambda'},x_{E^c})}
=
\det\left(
\begin{matrix}
\PD{}{P}{x_{\lambda'}}(x_E,u)   & 0 \\
0                  & \id
\end{matrix}
\right)
\neq 0
\]
on $\im(\varphi)$.

The case when $d=0$ is of particular interest.  In this case, $\lambda$ is the empty map so we write $\IF_\emptyset(P;C)$, and we have $\{x\in C : P(x) = 0\} = \{a\}$ for a single point $a$.  Note that if $\IF_\emptyset(P;C)$ and $\IF_\emptyset(P;B)$ both hold, with $a\in B$ and $a\in C$, then property 3 implies that there exists a rational box manifold $A\subseteq B\cap C$ containing $a$ such that $\IF_\emptyset(P;A)$ holds (this is because the set $D$ in property 3 can be made as small as we wish).  If $P$ is computably $C^1$ and if $B$ and $C$ are rational boxes, property 4 implies that we can effectively find such an $A$ through a search procedure.
\end{remarks}

\section{Computably Holomorphic Functions}
\label{s:compHolom}

We begin with a lemma about computably $C^p$ functions between Euclidean spaces.

\begin{lemma}\label{lemma:compApprox}
Let $p\in\NN\cup\{\infty\}$, and let $U$ be a c.e.\ open subset of some computable domain in $\RR^n$.  A $C^p$ function $f = (f_1,\ldots,f_m):U\to\RR^m$ is computably $C^p$ if and only if there exists an algorithm acting as follows:
\begin{equation}\label{eq:approxAlg}
\text{\parbox{5.2in}{Given $\epsilon\in\QQ_+$, $\alpha\in\NN^{n}_{\leq p}$, and the name for a nondegenerate compact rational box $B\subseteq\RR^n$, the algorithm stops if and only if $B\subseteq U$.  If the algorithm stops, it outputs an approximation algorithm for a computably continuous function $g:B\to\RR^m$ such that $\left|\PDn{\alpha}{f}{x}(x) - g(x)\right| < \epsilon$
for all $x\in B$.}}
\end{equation}
\end{lemma}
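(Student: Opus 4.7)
The plan is to prove the two implications separately, with the converse being the substantive direction.

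For the forward implication, given a $C^p$ approximation algorithm $A$ for $f$, I would simply take $g$ to be the restriction of $\PDn{\alpha}{f}{x}$ to $B$, for which the required bound $|\PDn{\alpha}{f}{x}(x) - g(x)| < \epsilon$ holds trivially (with difference zero). An approximation algorithm for $g$ is obtained by restricting $A$'s queries to compact rational subboxes of $B$. To ensure the outer procedure halts precisely when $B\subseteq U$, I would first spawn a parallel search running $A$ on inputs $(\alpha, I, B)$ as $I$ ranges over all open rational boxes in $\RR^m$; since $\PDn{\alpha}{f}{x}$ has bounded image on $B$ whenever $B\subseteq U$, at least one such query will halt, and the absence of any halt is correctly equivalent to $B\not\subseteq U$.

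For the converse, suppose we are given the algorithm \eqref{eq:approxAlg}; we must build a $C^p$ approximation algorithm that halts on $(\alpha, I, B)$ exactly when $B\subseteq U$ and $\PDn{\alpha}{f}{x}(B)\subseteq I$. The key idea is to replace $\PDn{\alpha}{f}{x}$ on $B$ by a computably continuous $\epsilon$-approximant $g_\epsilon$, and then use image-containment for $g_\epsilon$ as a computable proxy for image-containment of the derivative. Concretely, I would enumerate pairs $(\epsilon, I')$ where $\epsilon\in\QQ_+$ and $I'\subseteq\RR^m$ is an open rational box whose closed $\epsilon$-neighborhood lies inside $I$; for each such pair, invoke \eqref{eq:approxAlg} to obtain an approximation algorithm for some computably continuous $g_\epsilon:B\to\RR^m$ with $|g_\epsilon(x) - \PDn{\alpha}{f}{x}(x)| < \epsilon$ on $B$, and then semi-decide the inclusion $g_\epsilon(B)\subseteq I'$. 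In parallel, I would invoke \eqref{eq:approxAlg} at any fixed $\epsilon$ to semi-decide $B\subseteq U$; the overall procedure halts once both conditions are confirmed.

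Correctness is then routine. If $g_\epsilon(B)\subseteq I'$ is confirmed, then for every $x\in B$ the value $\PDn{\alpha}{f}{x}(x)$ lies within $\epsilon$ of a point of $I'$ and hence inside $I$. Conversely, if $B\subseteq U$ and $\PDn{\alpha}{f}{x}(B)\subseteq I$, compactness of $B$ together with continuity of $\PDn{\alpha}{f}{x}$ yields a $\delta>0$ and an open rational box $I'$ with $\PDn{\alpha}{f}{x}(B)\subseteq I'$ whose closed $\delta$-neighborhood still lies in $I$; the enumeration eventually reaches the pair $(\delta/2, I')$, and the corresponding $g_{\delta/2}$ satisfies $g_{\delta/2}(B)\subseteq I'$, so the verification succeeds.

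The main obstacle lies in cleanly invoking the semi-decidability of $g_\epsilon(B)\subseteq I'$ from just an approximation algorithm for $g_\epsilon$ on the co-c.e.\ compact box $B$ together with the c.e.\ open $I'$. This is exactly the computable open-set principle for computably continuous maps established in \cite[Section 2]{DJMcharDec}: the inclusion is witnessed by finitely covering $B$ with compact rational subboxes, on each of which $g_\epsilon$ is detected (via its approximation algorithm) to map into $I'$. Once this is cited as a black box, both the algorithm and its verification fall into place.
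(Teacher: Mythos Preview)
Your approach is essentially the same as the paper's: for the converse, approximate $\PDn{\alpha}{f}{x}$ on $B$ by a computably continuous $g$ and test containment of $g(B)$ in a rationally shrunk copy of $I$. The paper streamlines your enumeration over pairs $(\epsilon,I')$ into a single index $k$, setting $\epsilon=1/k$ and $I_k=\prod_i(a_i+1/k,\,b_i-1/k)$, then searching for $k$ with $g_k(B)\subseteq I_k$; this avoids the need to enumerate intermediate boxes $I'$ and makes the correctness bookkeeping cleaner.

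One small slip in your writeup: from $\PDn{\alpha}{f}{x}(B)\subseteq I'$ and $|g_{\delta/2}-\PDn{\alpha}{f}{x}|<\delta/2$ you cannot conclude $g_{\delta/2}(B)\subseteq I'$, only that $g_{\delta/2}(B)$ lies in the $\delta/2$-neighborhood of $I'$. You need to choose $I'$ so that it already contains a $\delta/2$-cushion around $\PDn{\alpha}{f}{x}(B)$ (e.g., take $\delta=\dist(\PDn{\alpha}{f}{x}(B),\bd I)$ and let $I'$ contain the closed $\delta/3$-neighborhood of the image while the closed $\delta/3$-neighborhood of $I'$ stays in $I$, then use the pair $(\delta/3,I')$). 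This is a routine fix and does not affect the soundness of the method.
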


\begin{proof}
If $f$ is computably $C^p$, then letting $g(x) = \PDn{\alpha}{f}{x}(x)$ gives an algorithm such as \eqref{eq:approxAlg}.  Conversely, suppose that there is an algorithm such as \eqref{eq:approxAlg}.  We claim that the following is a $C^p$ approximation algorithm for $f$:
\begin{quote}
Suppose we are given $\alpha\in\NN^{n}_{\leq p}$ and names for a nondegenerate compact rational box $B \subseteq U$ and an open rational box $I = \prod_{i=1}^{m}(a_i,b_i) \subseteq\RR^m$.  For each positive integer $k$, let $I_k = \prod_{i=1}^{m}(a_i+1/k, b_i-1/k)$, and let $g_k = (g_{k,1},\ldots,g_{k,m}):B\to\RR^m$ be the computable function given by \eqref{eq:approxAlg} such that
\begin{equation}\label{eq:gk}
\left|\PDn{\alpha}{f}{x}(x) - g_k(x)\right| < \frac{1}{k}
\end{equation}
for all $x\in B$.  Use time sharing to search for a positive integer $k$ such that
\begin{equation}\label{eq:gkB}
g_k(B)\subseteq I_k.
\end{equation}
Stop once an integer $k$ has been found, and do not stop otherwise.
\end{quote}
To verify the claim, it suffices to show that $\PDn{\alpha}{f}{x}(B)\subseteq I$ if and only if \eqref{eq:gkB} holds for some positive integer $k$, since the algorithm will discover such a $k$ if and only if there actually exists such a $k$.

If \eqref{eq:gkB} holds for some $k$, then \eqref{eq:norms}, \eqref{eq:gk}, and \eqref{eq:gkB} show that $\PDn{\alpha}{f}{x}(B)\subseteq I$.  To prove the converse, suppose that $\PDn{\alpha}{f}{x}(B)\subseteq I$.  Since $I$ is open and $\PDn{\alpha}{f}{x}(B)$ is compact, $\delta := \dist\left(\PDn{\alpha}{f}{x}(B),\bd(I)\right)$ is positive.  For all $i\in\{1,\ldots,m\}$, $k > 2/\delta$, and $x\in B$, we have
\[
g_{k,i}(x) < \PDn{\alpha}{f_i}{x}(x) + \frac{1}{k} \leq b_i -\delta + \frac{1}{k} < b_i - \frac{2}{k} + \frac{1}{k} =  b_i -\frac{1}{k}.
\]
A similar calculation shows that
\[
g_{k,i}(x) > a_i + \frac{1}{k},
\]
so $g_k(B)\subseteq I_k$.
\end{proof}

By identifying $\CC^n$ with $\RR^{2n}$, all of the computable concepts of point-set topology from \cite[Section 2]{DJMcharDec} also make sense in $\CC^n$, as does the notion of a computably continuous function $f:U\to\CC^m$, where $U$ is c.e.\ open in $\CC^n$.  One could define what it means for $f:U\to\CC^m$ to be computably $C^1$, analogous to the real case, but one would simply obtain a definition equivalent to the following concept.

\begin{definition}\label{def:compHolom}
Let $U$ be c.e.\ open in $\CC^n$.  A function $f:U\to\CC^m$ is {\bf computably holomorphic} is $f$ is holomorphic and computably continuous.   More generally, a family of functions on $U$ is computably holomorphic if the family is computably continuous and each member of the family is holomorphic.
\end{definition}

For a holomorphic function $f:U\to\RR$ defined on a simply connected open set $U\subseteq \CC^n$, recall Cauchy's integral formula,
\begin{equation}\label{eq:Cauchy}
f(x) = \frac{1}{(2\pi i)^n} \int_{C_1}\!\!\cdots\!\!\int_{C_n} \frac{f(z)}{(z_1-x_1)\cdots(z_n-x_n)} dz_1\cdots dz_n,
\end{equation}
and its more general differentiated form,
\begin{equation}\label{eq:CauchyDeriv}
\PDn{\alpha}{f}{x}(x) = \frac{\alpha!}{(2\pi i)^n} \int_{C_1}\!\!\cdots\!\!\int_{C_n}
\frac{f(z)}{(z_1-x_1)^{\alpha_1+1}\cdots(z_n-x_n)^{\alpha_n+1}}
dz_1\cdots dz_n,
\end{equation}
for each $\alpha\in\NN^n$, where $C_1,\ldots,C_n$ are counterclockwise oriented circular paths in $\CC$ with $C = C_1\times\cdots\times C_n \subseteq U$, and $x = (x_1,\ldots,x_n)$ with each $x_i$ in the region in $\CC$ bounded by $C_i$.  We use a shorthand notation for \eqref{eq:Cauchy} and \eqref{eq:CauchyDeriv}, writing
\[
f(x) = \frac{1}{(2\pi i)^n}\int_C\frac{f(z)}{z-x}dz,
\]
and
\[
\PDn{\alpha}{f}{x}(x) = \frac{\alpha!}{(2\pi i)^n} \int_{C} \frac{f(z)}{(z-x)^{\alpha+\bar{1}}} dz,
\]
for each $\alpha\in\NN^n$, where $\bar{1} = (1,\ldots,1)$.  Observe that if  $f:U\to\CC$ is computably continuous, and if the parameterization of the paths $C_1,\ldots,C_n$ and their first derivatives are all computably continuous, then the real and imaginary parts of the integral \eqref{eq:CauchyDeriv} can be expressed as real parameterized integrals of computably continuous functions.  It was shown in \cite[Lemma 3.5]{DJMcharDec} that computably $C^p$ functions are preserved under parameterized integrals.

\begin{lemma}\label{lemma:compHolom}
Let $U$ be c.e.\ open in $\CC^n$ and $f:U\to\CC^m$ be computably holomorphic. Then $f$ is computably $C^\infty$, in the sense that there exists an algorithm acting as follows:
\begin{quote}
Given $\alpha\in\NN^n$, a compact rational box $B\subseteq\CC^n$, and an open rational box $I\subseteq\CC^m$, the algorithm stops if and only if $B\subseteq U$ and $\PDn{\alpha}{f}{x}(B) \subseteq I$.
\end{quote}
\end{lemma}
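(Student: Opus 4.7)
The plan is to express each complex partial derivative of $f$ on $B$ as a parameterized integral via the differentiated Cauchy formula \eqref{eq:CauchyDeriv}, and then to exploit the parameterized integration result \cite[Lemma 3.5]{DJMcharDec} together with the standard semi-decidability of the relation ``co-c.e.\ compact $\subseteq$ c.e.\ open'' from \cite[Section 2]{DJMcharDec}.

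Given inputs $\alpha\in\NN^n$, a compact rational box $B = B_1\times\cdots\times B_n\subseteq\CC^n$, and an open rational box $I\subseteq\CC^m$, I would time-share over all products $C = C_1\times\cdots\times C_n$ of counterclockwise-oriented circles $C_j\subseteq\CC$ with rational centers and rational radii such that each $C_j$ strictly encloses $B_j$. This surrounding condition is decidable on rational data. For each such $C$, one semi-decides $C\subseteq U$ using c.e.\ openness of $U$ and compactness of $C$. Once $C\subseteq U$ has been verified, form
\[
g(x) \;=\; \frac{\alpha!}{(2\pi i)^n}\int_{C}\frac{f(z)}{(z-x)^{\alpha+\bar{1}}}\,dz,
\]
which by \eqref{eq:CauchyDeriv} coincides with $\PDn{\alpha}{f}{x}$ on $B$. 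Because $\dist(B_j,C_j)$ is a positive rational, the denominator is uniformly bounded below by a positive rational over $z\in C$ and $x\in B$, so under the standard rational parameterization $z_j = c_j + r_j(\cos t_j + i\sin t_j)$ the real and imaginary parts of the integrand are computably continuous functions of $(x,t)\in B\times[0,2\pi]^n$. By \cite[Lemma 3.5]{DJMcharDec}, $g:B\to\CC^m$ is computably continuous.

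Since $B$ is co-c.e.\ compact and $g$ is computably continuous, $g(B)\subseteq\CC^m$ is co-c.e.\ compact, and hence the inclusion $g(B)\subseteq I$ in the c.e.\ open rational box $I$ is semi-decidable. The algorithm stops as soon as some $C$ passes all three tests: $C$ surrounds $B$, $C\subseteq U$, and $g(B)\subseteq I$. If it stops, then $B\subseteq U$ and $\PDn{\alpha}{f}{x}(B)=g(B)\subseteq I$. Conversely, if $B\subseteq U$ and $\PDn{\alpha}{f}{x}(B)\subseteq I$, compactness of $B$ together with openness of $U$ yield a rational polycircle $C$ surrounding $B$ with $C\subseteq U$; for such $C$ the associated $g$ equals $\PDn{\alpha}{f}{x}$ on $B$, so $g(B)\subseteq I$, and the enumeration will discover such a $C$ in finitely many steps.

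The only substantive obstacle is verifying that the integrand is uniformly computably continuous on $B\times C$; this comes down to the explicit positive-rational separation $\dist(B_j,C_j)>0$ built into our choice of circles, after which the claim reduces to the parameterized integration lemma already developed in the parent paper.
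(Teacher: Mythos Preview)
Your approach is essentially the paper's: express $\PDn{\alpha}{f}{x}$ via the differentiated Cauchy formula and invoke the parameterized integration lemma. The paper packages the conclusion through Lemma~\ref{lemma:compApprox} (given $B\subseteq U$, output an approximation algorithm for $\PDn{\alpha}{f}{x}\Restr{B}$) and uses rectangular contours with rational vertices rather than circles, but these are cosmetic differences.

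There is, however, a genuine gap in your soundness direction. You verify only that the distinguished boundary $C=C_1\times\cdots\times C_n$ lies in $U$, and then assert that if the algorithm stops, $B\subseteq U$ and $\PDn{\alpha}{f}{x}(B)=g(B)$. Neither follows from $C\subseteq U$. Take $n=1$, $U=\{z\in\CC:\tfrac12<|z|<2\}$, $B$ a small rational box about $0$, and $C_1$ the unit circle: then $C_1$ strictly encloses $B$ and $C_1\subseteq U$, yet $B\not\subseteq U$, and the Cauchy integral over $C_1$ does not compute $\PDn{\alpha}{f}{x}$ on $B$ (indeed $f$ is not even defined there). The several-variable Cauchy formula requires $f$ holomorphic on the closed polydisk bounded by $C$, not merely on its distinguished boundary. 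The fix is to semi-decide instead that the full closed polydisk $\overline{D}_1\times\cdots\times\overline{D}_n$ lies in $U$; this is still a co-c.e.\ compact set inside a c.e.\ open set, and it simultaneously gives $B\subseteq U$ and the validity of \eqref{eq:CauchyDeriv}. The paper handles this by first finding a compact rational box $A\subseteq U$ with $B\subseteq\Int(A)$ and then choosing the contours inside $A$. (A minor aside: with circular contours $\dist(B_j,C_j)$ need not be rational, only a computable positive real; a rational lower bound suffices for your denominator estimate.)
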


\begin{proof}
Consider a compact rational box $B\subseteq U$.  Since $U$ is c.e.\ open, we can effectively find a compact rational box $A\subseteq U$ such that $B\subseteq\Int(A)$.  Write $A = \prod_{i=1}^{n}A_i$ and $B = \prod_{i=1}^{n} B_i$, where the $A_i$ and $B_i$ are compact rational boxes in $\CC$.  For each $i\in\{1,\ldots,n\}$, choose a counterclockwise oriented, rectangular path $C_i \subseteq \Int(A_i)\setminus B_i$ with rational vertices.  Applying \eqref{eq:CauchyDeriv} and the comments made just prior to the lemma shows that $f$ is computably $C^\infty$ on $B$, and hence is computably $C^\infty$ on $U$ by a rather trivial application of Lemma \ref{lemma:compApprox} (which applies to $\CC$ via the identification of $\CC$ with $\RR^2$).
\end{proof}

\begin{notation}\label{notation:complexNeigh}
For each $r,s>0$, let
\[
\CC(r,s) = \{z\in\CC : \dist([-r,r],z) < s\}.
\]
For each $r = (r_1,\ldots,r_n)$ and $s = (s_1,\ldots,s_n)$ in $(0,\infty)^n$, let
\[
\CC(r,s) = \CC(r_1,s_1)\times\cdots\times\CC(r_n,s_n).
\]
\end{notation}

Fix a computable map $R:\Sigma\to\bigcup_{n\in\NN}\QQ_{+}^{n}$ with arity map $\eta$.

\begin{definition}\label{def:San}
Let $\Comp_{\an}(\rho,R)$ denote the set of all computably holomorphic families of functions $\S = \{S_\sigma\}_{\sigma\in\Sigma}$ such that for each $\sigma\in\Sigma$, we have $S_\sigma:\CC(\rho(\sigma),R(\sigma))\to\CC$ and $S_\sigma(x) \in\RR$ for all $x\in (-\rho(\sigma)-R(\sigma),\rho(\sigma)+R(\sigma))$.
\end{definition}

\begin{definition}\label{def:compConv}
Suppose that $\{\S^{(k)}\}_{k\in\NN}$ is a {\bf computable sequence in $\Comp_{\an}(\rho,R)$}: this means that each $\S^{(k)} = \{S^{(k)}_{\sigma}\}_{\sigma\in\Sigma}$ is a member of $\Comp_{\an}(\rho,R)$ and that $\{S^{(k)}_{\sigma}\}_{(k,\sigma)\in\NN\times\Sigma}$ is a computably continuous family.  The sequence $\{\S^{(k)}\}_{k\in\NN}$ {\bf converges computably} to $\S\in\Comp_{\an}(\rho,R)$, written as
\[
\text{$\S = \lim_{k\to\infty} \S^{(k)}$ computably,}
\]
if there exists a computable function $K:\Sigma\times\QQ_+\to\NN$
such that for all $(\sigma,\epsilon)\in\Sigma\times\QQ_+$ and all
$k\geq K(\sigma,\epsilon)$,
\[
\text{$|S_{\sigma}^{(k)}(x) - S_\sigma(x)| < \epsilon$ on $\CC(\rho(\sigma),R(\sigma))$.}
\]
We say that $\{\S^{(k)}\}_{k\in\NN}$ is {\bf computably Cauchy} if there exists a computable function
$K:\Sigma\times\QQ_+\to\NN$ such that for all
$(\sigma,\epsilon)\in\Sigma\times\QQ_+$ and all $k,l\geq
K(\sigma,\epsilon)$,
\[
\text{$|S_{\sigma}^{(k)}(x) - S_{\sigma}^{(l)}(x)| < \epsilon$ on $\CC(\rho(\sigma),R(\sigma))$.}
\]
\end{definition}

\begin{lemma}\label{lemma:compConvDiff}
If $\{\S^{(k)}\}_{k\in\NN}$ is a computable sequence in $\Comp_{\an}(\rho,R)$ which converges computably to some $\S\in\Comp_{\an}(\rho,R)$, then there exists a computable function $K^\infty:\Delta(\Sigma)\times\QQ_+ \to \NN$ such that for all $(\sigma,\alpha ,\epsilon) \in\Delta(\Sigma) \times \QQ_+$, $k\geq K^\infty(\sigma,\alpha,\epsilon)$, and
$x\in[-\rho(\sigma),\rho(\sigma)]$, we have
\[
\left|\PDn{\alpha}{S_{\sigma}^{(k)}}{x}(x) - \PDn{\alpha}{S_\sigma}{x}(x)\right| < \epsilon.
\]
\end{lemma}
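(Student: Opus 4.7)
The plan is to use Cauchy's integral formula for derivatives to control $|\partial^\alpha S_\sigma^{(k)}/\partial x^\alpha - \partial^\alpha S_\sigma/\partial x^\alpha|$ on the real box $[-\rho(\sigma),\rho(\sigma)]$ in terms of $\sup|S_\sigma^{(k)} - S_\sigma|$ on the complex neighborhood $\CC(\rho(\sigma),R(\sigma))$, where the latter is already controlled by the given computable modulus of convergence $K:\Sigma\times\QQ_+\to\NN$.

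More precisely, fix $\sigma\in\Sigma$, let $n=\eta(\sigma)$, write $R(\sigma)=(R_1,\ldots,R_n)$, and set $s_i := R_i/2$. For any real point $x=(x_1,\ldots,x_n)\in[-\rho(\sigma),\rho(\sigma)]$, the counterclockwise circle $C_i$ of radius $s_i$ around $x_i$ in $\CC$ lies in $\CC(\rho(\sigma)_i,R_i)$, since for any $z_i\in C_i$ we have $\dist(z_i,[-\rho(\sigma)_i,\rho(\sigma)_i]) \leq |z_i-x_i| = s_i < R_i$. Applying (\ref{eq:CauchyDeriv}) to the holomorphic difference $f_k := S_\sigma^{(k)}-S_\sigma$ over the polycircle $C = C_1\times\cdots\times C_n$ yields
\[
\left|\PDn{\alpha}{f_k}{x}(x)\right|
\leq
\frac{\alpha!}{(2\pi)^n}\cdot\prod_{i=1}^{n}(2\pi s_i)\cdot\frac{\sup_{z\in C}|f_k(z)|}{\prod_{i=1}^{n}s_i^{\alpha_i+1}}
=
\alpha!\cdot\prod_{i=1}^{n}s_i^{-\alpha_i}\cdot\sup_{z\in C}|f_k(z)|.
\]
Since $C\subseteq\CC(\rho(\sigma),R(\sigma))$, the supremum on $C$ is at most the supremum on the whole neighborhood, which is what the computable convergence hypothesis controls.

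Thus define the rational constant $M(\sigma,\alpha) := \alpha!\cdot 2^{|\alpha|}\cdot\prod_{i=1}^{\eta(\sigma)} R_i(\sigma)^{-\alpha_i}$, which is effectively computable from $\sigma$ and $\alpha$ since $R$ is a computable map. The algorithm producing $K^\infty$ is then: given $(\sigma,\alpha,\epsilon)\in\Delta(\Sigma)\times\QQ_+$, compute $M(\sigma,\alpha)$, pick any rational $\epsilon'\in\QQ_+$ with $\epsilon' \leq \epsilon/(2M(\sigma,\alpha))$, and output $K^\infty(\sigma,\alpha,\epsilon) := K(\sigma,\epsilon')$. By the Cauchy bound above, for all $k\geq K^\infty(\sigma,\alpha,\epsilon)$ and all $x\in[-\rho(\sigma),\rho(\sigma)]$ we obtain $\bigl|\partial^\alpha f_k/\partial x^\alpha(x)\bigr| \leq M(\sigma,\alpha)\cdot\epsilon' \leq \epsilon/2 < \epsilon$. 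There is no serious obstacle here; the only subtlety is verifying that the chosen contour lies inside the open neighborhood where uniform convergence holds, and that every ingredient in the bound (the factorial, the powers of $R_i(\sigma)$, the call to $K$) is effective, both of which are immediate.
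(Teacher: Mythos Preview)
Your proof is correct and takes essentially the same approach as the paper: both arguments reduce to the standard Cauchy estimate for derivatives, using the computable modulus $K$ on the complex neighborhood $\CC(\rho(\sigma),R(\sigma))$ to control derivatives on the real box. The only cosmetic difference is the choice of contour---the paper integrates over the fixed boundary of $\CC(\rho_i(\sigma), cR_i(\sigma))$ for some $c\in\QQ\cap(0,1)$, while you use circles of radius $R_i(\sigma)/2$ centered at the moving point $x_i$---and correspondingly the explicit constants differ, but the argument is the same.
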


\begin{proof}
Fix a computable function $K:\Sigma\times\QQ_+\to\NN$
such that for all $(\sigma,\epsilon)\in\Sigma\times\QQ_+$ and all
$k\geq K(\sigma,\epsilon)$,
\[
\text{$|S_{\sigma}^{(k)}(x) - S_\sigma(x)| < \epsilon$ on $\CC(\rho(\sigma),R(\sigma))$.}
\]
Choose $c\in\QQ\cap(0,1)$.  For each $(\sigma,\alpha,\epsilon)\in\Delta(\Sigma)\times\QQ_+$, define
\[
K^\infty(\sigma,\alpha,\epsilon)
=
K\left(\sigma,
\frac{c^{|\alpha|} R(\sigma)^\alpha \epsilon}{\alpha!}
\right).
\]
For each $\sigma\in\Sigma$, let $C(\sigma) = C_1(\sigma)\times\cdots\times C_{\eta(\sigma)}(\sigma)$, where each $C_i(\sigma)$ is the counterclockwise oriented path consisting of the boundary of $\CC(\rho_i(\sigma), c R_i(\sigma))$, where $\rho_i(\sigma)$ and $R_i(\sigma)$ are the $i$th components of $\rho(\sigma)$ and $R(\sigma)$.  Then for all $(\sigma,\alpha,\epsilon)\in\Delta(\Sigma)\times\QQ_+$ and $x\in[-\rho(\sigma),\rho(\sigma)]$,
\begin{eqnarray*}
\left|
\PDn{\alpha}{S_\sigma}{x}(x) - \PDn{\alpha}{S^{(k)}_{\sigma}}{x}(x)
\right|
    & = &
    \left|
    \frac{\alpha!}{(2\pi i)^{\eta(\sigma)}} \int_{C(\sigma)}
    \frac{S_\sigma(z) - S^{(k)}_{\sigma}(z)}{(z-x)^{\alpha+\bar{1}}} dz
    \right|
    \\
    & < &
    \frac{\alpha!}{(2\pi)^{\eta(\sigma)}}
    \frac{\frac{c^{|\alpha|} R(\sigma)^\alpha \epsilon}{\alpha!}}
    {c^{|\alpha|+n} R(\sigma)^{\alpha+\bar{1}}}
    \left|\int_{C(\sigma)}dz\right|
    \\
    & = &
    \epsilon.
\end{eqnarray*}
\end{proof}

\begin{lemma}\label{lemma:compCauchy}
If $\{\S^{(k)}\}_{k\in\NN}$ is a computably Cauchy sequence in $\Comp_{\an}(\rho,R)$, then $\{\S^{(k)}\}_{k\in\NN}$ converges computably to some $\S\in\Comp_{\an}(\rho,R)$.
\end{lemma}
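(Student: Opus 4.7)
The plan is to construct the limit family $\S=\{S_\sigma\}_{\sigma\in\Sigma}$ pointwise from the Cauchy data, then verify in turn that each $S_\sigma$ is holomorphic with the required real-valuedness, that $\S$ is computably continuous as a family indexed by $\Sigma$ (which places $\S\in\Comp_{\an}(\rho,R)$), and finally that $\S^{(k)}\to\S$ with a computable modulus.

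Fix a computable Cauchy modulus $K:\Sigma\times\QQ_+\to\NN$ as in Definition \ref{def:compConv}. For each $\sigma\in\Sigma$ and each $x\in\CC(\rho(\sigma),R(\sigma))$, the scalar sequence $\{S^{(k)}_\sigma(x)\}_{k\in\NN}$ is Cauchy in $\CC$, so I set $S_\sigma(x):=\lim_{k\to\infty}S^{(k)}_\sigma(x)$. Passing $l\to\infty$ in the Cauchy estimate gives
\[
|S^{(k)}_\sigma(x)-S_\sigma(x)|\leq\epsilon
\quad\text{for all $x\in\CC(\rho(\sigma),R(\sigma))$ and $k\geq K(\sigma,\epsilon)$,}
\]
so $S^{(k)}_\sigma\to S_\sigma$ uniformly on $\CC(\rho(\sigma),R(\sigma))$. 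By the classical Weierstrass theorem (uniform limits of holomorphic functions are holomorphic), $S_\sigma$ is holomorphic on $\CC(\rho(\sigma),R(\sigma))$, and it is real on $(-\rho(\sigma)-R(\sigma),\rho(\sigma)+R(\sigma))$ because each $S^{(k)}_\sigma$ is. Replacing $\epsilon$ by $\epsilon/2$ in the displayed bound yields the strict inequality $|S^{(k)}_\sigma-S_\sigma|\leq\epsilon/2<\epsilon$ for $k\geq K(\sigma,\epsilon/2)$, so $(\sigma,\epsilon)\mapsto K(\sigma,\epsilon/2)$ is a computable convergence modulus in the sense of Definition \ref{def:compConv}, settling the final clause of the lemma.

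To verify computable continuity of $\S$, I mimic the proof of Lemma \ref{lemma:compApprox} in the complex setting (valid via $\CC^n\cong\RR^{2n}$), applied uniformly in the family index. Given names for $\sigma$, an open rational box $I\subseteq\CC$, and a compact rational box $B\subseteq\CC^{\eta(\sigma)}$, time-share over positive integers $k$: for each $k$, run the given approximation algorithm for the computably continuous family $\{S^{(k)}_\sigma\}_{(k,\sigma)\in\NN\times\Sigma}$ at the specific index $(K(\sigma,1/k),\sigma)$ to attempt to verify that $B\subseteq\CC(\rho(\sigma),R(\sigma))$ and that $S^{(K(\sigma,1/k))}_\sigma(B)\subseteq I_k$, where $I_k$ denotes $I$ shrunk inward by $1/k$ on every side. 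The uniform estimate $|S^{(K(\sigma,1/k))}_\sigma-S_\sigma|\leq 1/k$, combined with the same compactness/positive-distance argument used in Lemma \ref{lemma:compApprox}, guarantees that the search terminates if and only if $B\subseteq\CC(\rho(\sigma),R(\sigma))$ and $S_\sigma(B)\subseteq I$, which is exactly the required property.

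There is no serious obstacle: the mathematical content is the classical fact that a uniform Cauchy sequence of holomorphic functions has a holomorphic limit, and the effective content reduces to reusing the time-sharing trick already distilled in Lemma \ref{lemma:compApprox}. The one mild subtlety is the strict-versus-non-strict inequality when passing $l\to\infty$ in the Cauchy condition, which is handled uniformly by halving $\epsilon$; because the Cauchy modulus $K$ is computable in both $\sigma$ and $\epsilon$, every construction above is uniform in $\sigma$ and therefore yields a genuine family algorithm rather than one per $\sigma$.
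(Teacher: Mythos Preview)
Your proposal is correct and follows essentially the same approach as the paper: define $S_\sigma$ as the pointwise limit, use the halving trick $K(\sigma,\epsilon/2)$ to get strict inequality and a computable convergence modulus, and invoke Lemma \ref{lemma:compApprox} for computable continuity. The only cosmetic difference is that you cite the Weierstrass theorem for holomorphicity of the limit, whereas the paper verifies directly that $S_\sigma$ satisfies Cauchy's integral formula on each $\CC(\rho(\sigma),aR(\sigma))$ by passing to the limit under the integral sign---which is of course just the standard proof of Weierstrass's theorem written out.
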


\begin{proof}
Fix a computable function $K:\Sigma\times\QQ_+\to\NN$ such that for all
$(\sigma,\epsilon)\in\Sigma\times\QQ_+$ and all $k,l\geq K(\sigma,\epsilon)$,
\[
\text{$|S_{\sigma}^{(k)}(x) - S_{\sigma}^{(l)}(x)| < \epsilon$ on $\CC(\rho(\sigma),R(\sigma))$.}
\]
For each $\sigma\in\Sigma$ and $x\in\CC(\rho(\sigma),R(\sigma))$, the sequence $\{S^{(k)}_{\sigma}(x)\}_{k\in\NN}$ is Cauchy, so it converges.  Define $\S = \{S_\sigma\}_{\sigma\in\Sigma}$ by
\[
S_\sigma(x) = \lim_{k\to\infty} S_{\sigma}^{(k)}(x)
\quad\text{for each $\sigma\in\Sigma$ and $x\in\CC(\rho(\sigma),R(\sigma))$.}
\]
Now, for all $(\sigma,\epsilon)\in\Sigma\times\QQ_+$, $k\geq K(\sigma,\frac{\epsilon}{2})$, and $x\in\CC(\rho(\sigma),R(\sigma))$,
\begin{equation}\label{eq:compCauchyConv}
|S_\sigma(x) - S^{(k)}_{\sigma}(x)|
=
\lim_{l\to\infty}|S^{(l)}_{\sigma}(x) - S^{(k)}_{\sigma}(x)|
\leq
\frac{\epsilon}{2}
< \epsilon,
\end{equation}
which shows that $\{\S^{(k)}\}_{k\in\NN}$ converges to $\S$ computably, as long as we can establish that $\S\in\Comp_{\an}(\rho,R)$.  Lemma \ref{lemma:compApprox} and \eqref{eq:compCauchyConv} shows that $\S$ is computably continuous.  To see that each $S_\sigma$ is holomorphic on $\CC(\rho(\sigma),R(\sigma))$, it suffices to fix $a\in(0,1)$ and show that $S_\sigma$ is holomorphic on $\CC(\rho(\sigma),a R(\sigma))$.  Choose $c\in(a,1)$, and define the contour $C(\sigma)$ from $c$ as in the proof of Lemma \ref{lemma:compConvDiff}.  Each function $S^{(k)}_{\sigma}$ is holomorphic, and therefore satisfies Cauchy's integral formula.  So for all $x\in\CC(\rho(\sigma), a R(\sigma))$,
\[
\left|
S_\sigma(x) -
\frac{1}{(2\pi i)^{\eta(\sigma)}} \int_{C(\sigma)}
\frac{S_\sigma(z)}{z-x} dz
\right|
\leq
\left|S_\sigma(x) - S^{(k)}_{\sigma}(x)\right|
+
\left|
\frac{1}{(2\pi i)^{\eta(\sigma)}} \int_{C(\sigma)}
\frac{S_\sigma(z) - S^{(k)}_{\sigma}(z)}{z-x} dz
\right|,
\]
and this upper bound tends to $0$ as $k\to\infty$.  So
\[
S_\sigma(x)
=
\frac{1}{(2\pi i)^{\eta(\sigma)}} \int_{C(\sigma)}
\frac{S_\sigma(z)}{z-x} dz
\]
on $\CC(\rho(\sigma),a R(\sigma))$, which proves that $S_\sigma$ is holomorphic on this set.
\end{proof}

\section{Concepts from the parent paper}
\label{s:parent}

As stated in the Introduction, it is expected that the reader has read Notation 0.2 and Sections 2-4 from our parent paper \cite{DJMcharDec}.  This section reviews the other definitions from \cite{DJMcharDec} that we shall need and introduces some additional related notation.  Throughout this section, $\S = \{S_\sigma\}_{\sigma\in\Sigma}$ denotes a family of functions in $\C$ with domain map $\rho$.

\begin{definition}
If $\F$ and $\G$ are $\QQ$-algebras of real-valued functions defined on the
sets $X$ and $Y$, respectively, then their {\bf tensor product}, $\F\otimes\G$,
is the $\QQ$-algebra of functions on $X\times Y$ generated by the functions
$h:X\times Y\to\RR$ of the form
\begin{enumerate}
\item
$h(x,y) = f(x)$ for some $f\in\F$, or

\item
$h(x,y) = g(y)$ for some $g\in\G$.
\end{enumerate}
\end{definition}

\begin{definition}\label{def:naturalStratBox}
The {\bf natural stratification} of a nonempty interval $I\subseteq\RR$ is the set denoted by $\Strat(I)$ which consists of $\Int(I)$ (when $I$ is nondegenerate) and any connected components of $\bd(I)$ that are contained in $I$.  For example, $\Strat(\{a\}) = \{\{a\}\}$, $\Strat((a,b]) = \{(a,b),\{b\}\}$, $\Strat([a,b]) = \{\{a\},(a,b),\{b\}\}$, and $\Strat(\RR) = \RR$.   The {\bf natural stratification} of a nonempty box $B = \prod_{i=1}^{n}B_i \subseteq\RR^n$ is defined by
\[
\Strat(B) = \left\{\prod_{i=1}^{n}C_i : \text{$C_i\in\Strat(B_i)$ for all $i\in\{1,\ldots,n\}$}\right\}.
\]
\end{definition}

\begin{definition}\label{def:Salgebra}
An {\bf $\S$-algebra on a natural domain} is a finite tensor product of algebras of the form $\QQ[x_1]$ (defined on $\RR$) or of the form $\QQ[x_1,\ldots,x_{\eta(\sigma)},\PDn{\alpha}{S_\sigma}{x}]$ (defined on $[-\rho(\sigma),\rho(\sigma)]$) for some $\sigma\in\Sigma$ and $\alpha\in\NN^{\eta(\sigma)}$.  If $\F$ is an $\S$-algebra on a natural domain $D\subseteq\RR^n$, and $B$ is a rational box manifold which is open in some member of the natural stratification of $D$, then we call $\F\Restr{B}$ an {\bf $\S$-algebra}.
\end{definition}

\begin{definition}\label{def:Spoly}
A function $P:D\to\RR^k$ is an {\bf $\S$-polynomial map} if $P\in\F^k$ for some  $\S$-algebra $\F$ on $D$.
\end{definition}

\begin{definition}\label{def:SpolyName}
If $P:D\to\RR^k$ is an $\S$-polynomial map, with $D\subseteq\RR^m$, then a {\bf name} for $P$ is a tuple
\begin{equation}\label{eq:name}
(p(x,y),\sigma,\alpha,\xi,\name(D))
\end{equation}
such that
\begin{equation}\label{eq:nameForP}
P(x) =
p\left(x,\PDn{\alpha(1)}{S_{\sigma(1)}}{x}\circ\Pi_{\xi(1)}(x), \ldots, \PDn{\alpha(n)}{S_{\sigma(n)}}{x}\circ\Pi_{\xi(n)}(x)\right),
\end{equation}
where
\begin{enumerate}{\setlength{\itemsep}{3pt}
\item
$n\in\NN$ and $p(x,y)\in\QQ[x,y]^k$, with $x = (x_1,\ldots,x_m)$ and $y=(y_1,\ldots,y_n)$;

\item
$\sigma$, $\alpha$ and $\xi$ are maps with domain
$\{1,\ldots,n\}$ such that
\begin{enumerate}{\setlength{\itemsep}{3pt}
\item
for each $i\in\{1,\ldots,n\}$, $\sigma(i)$ is a member of $\Sigma$,
$\alpha(i)$ is a member of $\NN^{\eta\circ\sigma(i)}$, and
$\xi(i)$ is an increasing map from
$\{1,\ldots,\eta\circ\sigma(i)\}$ into $\{1,\ldots,m\}$;

\item
the images of $\xi(1),\ldots,\xi(n)$ are disjoint (so
$\eta\circ\sigma(1)+\cdots+\eta\circ\sigma(n)\leq m$).
}\end{enumerate}
}\end{enumerate}
\end{definition}

For any $\S$-algebra $\F$ with domain $D$, there exist maps $\sigma$, $\alpha$, and $\xi$ such that
\begin{equation}\label{eq:names}
\left\{(p(x,y),\sigma,\alpha,\xi,\name(D)) : p(x,y)\in\QQ[x,y]^k\right\}
\end{equation}
is a collection of names for the members of $\F^k$.  The map from the set
\eqref{eq:names} to $\F^k$ that sends each name
$(p(x,y),\sigma,\alpha,\xi,\name(D))$ to $P(x)$, as defined by \eqref{eq:nameForP}, is surjective but is not necessarily injective.

Definition \ref{def:SpolyName} assumes we are given the semantic object $P$ and then specifies the form of the possible syntactic objects \eqref{eq:name} which name $P$ via \eqref{eq:nameForP}.  In the following notation, we specify sets of names for certain types of $\S$-polynomials in a purely syntactic way.

\begin{notation}\label{notation:SpolyName}
Consider $m,n\in\NN$ and $d\in\{0,\ldots,m-1\}$.  Define $\Delta_{m,n}^{d}(\rho)$ to be the set of all tuples
\[
(p(x,y),\sigma,\alpha,\xi,\name(D)),
\]
where
\begin{enumerate}{\setlength{\itemsep}{3pt}
\item
$\sigma$, $\alpha$, and $\xi$ are maps which have the properties given clause 2 of Definition \ref{def:SpolyName};

\item
$D$ is a rational box manifold, with $\dim(D) > d$, which is open in some member of the natural stratification of
\[
\left(\prod_{i=1}^{n} [-\rho\circ\sigma(i),\rho\circ\sigma(i)]\right)
\times
\RR^{\im(\xi)^c},
\]
where $\im(\xi) = \bigcup_{i=1}^{n}\im(\xi(i))$, $\im(\xi)^c = \{1,\ldots,m\}\setminus\im(\xi)$, and  $[-\rho\circ\sigma(i),\rho\circ\sigma(i)]$ is considered to be in $\RR^{\im(\xi(i))}$ for each $i\in\{1,\ldots,n\}$;

\item
$p(x,y)\in \QQ[x,y]^{\dim(D)-d}$, where $x=(x_1,\ldots,x_m)$ and $y = (y_1,\ldots,y_n)$.
}\end{enumerate}
(Note: The set $\Delta_{m,n}^{d}(\rho)$ may be empty.  For instance, this will be the case if $n > m$ and $\eta(\sigma) > 0$ for all $\sigma\in\Sigma$.)  Given any name $(p(x,y),\sigma,\alpha,\xi,\name(D)) \in \Delta^{d}_{m,n}(\rho)$, we will write
\begin{eqnarray*}
f_i(x)
    & = &
    \PDn{\alpha(i)}{S_{\sigma(i)}}{x}\circ\Pi_{\xi(i)}(x) \quad\text{for each $i\in\{1,\ldots,n\}$,} \\
f(x)
    & = &
    (f_1(x), \ldots, f_n(x)), \\
F(x)
    & = &
    (x, f(x)), \\
P(x)
    & = &
    p\circ F(x).
\end{eqnarray*}
Thus $P = p\circ F:D\to\RR^{\dim(D)-d}$ is the $\S$-polynomial map named by $(p(x,y),\sigma,\alpha,\xi,\name(D))$.  Write $\Delta_{m,n}^{d}(\S)$ for the set of all $\S$-polynomial maps named by the members of $\Delta_{m,n}^{d}(\rho)$.
\end{notation}

\begin{definition}\label{def:approxOracle}
The {\bf approximation oracle for $\S$} is an oracle which acts as a $C^\infty$ approximation algorithm for the family $\S$.
\end{definition}

Thus, given $\sigma\in\Sigma$, $\alpha\in\NN^{\eta(\sigma)}$, and names for a compact rational box $B\subseteq[-\rho(\sigma),\rho(\sigma)]$ and a rational open interval $I$, the approximation oracle for $\S$ stops if and only if
\begin{equation}\label{eq:approx}
\PDn{\alpha}{S_\sigma}{x}(B) \subseteq I.
\end{equation}

\begin{definition}\label{def:precOracle}
The {\bf precision oracle for $\S$} acts as follows:
\begin{quote}
Given the following data:
\begin{itemize}
\item
a name for an $\S$-polynomial map $P:D\to\RR^{\dim(D)-d}$, where $D\subseteq\RR^m$ and $d\in\{0,\ldots,\dim(D)-1\}$,

\item
a name for a bounded rational box manifold $C$ which is open in $D$ with $\cl(C) \subseteq D$,

\item
an injection $\lambda:\{1,\ldots,d\}\to\{1,\ldots,m\}$ such that $\Pi_\lambda(D)$ is open in $\RR^d$,

\item
$i\in\{1,\ldots,m\}$,
\end{itemize}
if $\IF_\lambda(P;C)$ holds, and we write $\varphi = (\varphi_1,\ldots,\varphi_m):\Pi_\lambda(C)\to C$ for the section of the projection $\Pi_\lambda:\RR^m\to\RR^d$ implicitly defined by $P\circ\varphi(x_\lambda) = 0$ on $\Pi_\lambda(C)$, then the oracle stops if and only if
\begin{equation}\label{eq:prec}
\text{$\varphi_i(x_\lambda) = 0$ for all $x_\lambda\in \Pi_\lambda(C)$.}
\end{equation}
\end{quote}
\end{definition}

The theory of $\RR_{\S}$ clearly decides the approximation and precision oracles for $\S$, since \eqref{eq:approx} and \eqref{eq:prec} are expressible as $\L_{\S}$-sentences which can be effectively constructed from the data given as input to the two oracles.  The main purpose of \cite{DJMcharDec} was to show that the converse is also true, which gives the following.

\begin{theorem}[{\cite[Theorem 0.1]{DJMcharDec}}]\label{thm:charDec}
The theory of $\RR_{\S}$ is decidable if and only if the approximation and precision oracles for $\S$ are decidable.
\end{theorem}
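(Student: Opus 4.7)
The forward direction is essentially formal: both oracle halting conditions are $\L_{\S}$-sentences whose construction is effective in the oracle input. For \eqref{eq:approx}, each partial derivative $\PDn{\alpha}{S_\sigma}{x}$ is $\L_{\S}$-definable, exploiting the closure of $\C$ under implicit function extraction (derivatives arise as unique solutions of implicitly defined systems, so difference-quotient arguments at interior points or limiting relations at boundary points yield $\L_\S$-formulas), and the universal statement ``$\PDn{\alpha}{S_\sigma}{x}(B) \subseteq I$'' is then a bounded $\forall$-sentence. For \eqref{eq:prec}, the section $\varphi$ is $\L_{\S}$-definable by the formula $P\circ\varphi = 0$ together with the uniqueness supplied by $\IF_\lambda(P;C)$, so ``$\varphi_i \equiv 0$ on $\Pi_\lambda(C)$'' is likewise an $\L_{\S}$-sentence. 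A decision procedure for $\Th(\RR_{\S})$ therefore decides each halting query.

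For the reverse direction I would follow the standard effective o-minimality route, organized into three layers. First, establish an effective model-completeness or cell-decomposition theorem for $\RR_{\S}$ in an enlarged language obtained by adding symbols for all $\S$-polynomial maps from Definition \ref{def:SpolyName} together with symbols for the implicitly defined sections $\varphi$ arising when $\IF_\lambda(P;C)$ holds. The closure of $\C$ under composition, division by variables, and implicit function extraction, combined with quasianalyticity of $\C$, supplies exactly the ingredients needed to run such a decomposition constructively, along the lines of the resolution-of-singularities arguments known for analytic and quasianalytic classes. Second, reduce the truth of a given $\L_{\S}$-sentence to a Boolean combination of two kinds of atomic questions about the cells produced: sign/containment questions of the form ``$\PDn{\alpha}{S_\sigma}{x}(B) \subseteq I$'' and identical-vanishing questions of the form ``$\varphi_i \equiv 0$ on $\Pi_\lambda(C)$''. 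Third, dispatch the first class of questions with the approximation oracle and the second with the precision oracle.

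The principal obstacle is making the first layer truly effective: the decomposition must output cells described by tuples $(P,C,\lambda)$ whose defining conditions $\IF_\lambda(P;C)$ are algorithmically verifiable (property IF4 provides this when the computably $C^1$ data from the approximation oracle is available), and the decomposition algorithm itself must halt using only oracle information about $\S$. This is where the quasianalytic structure is essential, since it guarantees that every definable set is cut out by finitely many pieces of analytic data derivable from $\S$, and it is also where the IF-system axioms pay off, since they ensure that the defining data of a cell propagate cleanly under a cylindrical projection step, producing only new $\S$-polynomial maps and new implicit sections. Once such an effective decomposition is in place, the reduction to the two atomic oracle queries proceeds by routine induction on formula complexity, completing the decision procedure.
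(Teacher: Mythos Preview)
This theorem is not proved in the present paper at all: it is quoted from the parent paper \cite{DJMcharDec}, and the only argument given here is the one-sentence remark preceding the statement, covering the forward direction. So there is no ``paper's own proof'' to compare against beyond that remark and the pointer to \cite{DJMcharDec}.

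Your forward direction is fine and matches the paper's remark. One minor comment: you work harder than necessary on the definability of $\PDn{\alpha}{S_\sigma}{x}$. The functions $\widehat{S}_\sigma$ are primitives of the language $\L_{\S}$, and on the interior of $[-\rho(\sigma),\rho(\sigma)]$ their partial derivatives are first-order definable by the usual limit-of-difference-quotient formula; continuity to the boundary then gives the values there. No appeal to the IF-system closure properties is needed for this step.

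For the reverse direction, what you have written is a plausible research outline rather than a proof, and you say as much yourself. The paper confirms (in the discussion after Corollary~\ref{cor:TarskiTheorem}) that \cite{DJMcharDec} indeed proceeds ``by a model completeness construction,'' so your broad strategy is aligned with what is actually done. But the substance lies entirely in what you call the ``principal obstacle'': producing an effective normal form or decomposition whose atomic pieces are exactly of the two types the oracles can resolve. That is the content of the parent paper, and your sketch does not supply any of the mechanisms (the specific normal-form language, the effective stratification, the termination argument for the inductive projection step) that make it work. In particular, your claim that ``the IF-system axioms \ldots\ ensure that the defining data of a cell propagate cleanly under a cylindrical projection step'' is precisely the nontrivial theorem to be proved, not something one can assert. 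So as a proof this has a genuine gap; as a summary of the intended architecture it is reasonable, and for the purposes of this paper the correct move is simply to cite \cite{DJMcharDec}.
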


\section{A decision procedure for $\Th(\RR_{\S})$ when $\S$ is generic and computably $C^\infty$}
\label{s:genDec}

Throughout this section, $\S = \{S_\sigma\}_{\sigma\in\Sigma}$ denotes a family of functions in $\C$ with domain map $\rho$.  In this section we define what it means for $\S$ to be generic, and we prove Theorem \ref{introThm:genDec}.  We begin by rephrasing the precision oracle for $\S$ in the language of algebraic geometry.

For us, the word {\bf variety} shall mean a real affine variety over $\QQ$.
To any set of polynomials $Q\subseteq\QQ[x]$, with $x=(x_1,\ldots,x_n)$, we associate the variety
\[
\VV(Q) = \{a\in\RR^n : \text{$q(a) = 0$ for all $q(x)\in Q$}\}.
\]
For any polynomial maps $q_1\in\QQ[x]^{k_1},\ldots,q_l\in\QQ[x]^{k_l}$, if $Q$ is the set of all components of the maps $q_1,\ldots,q_l$, then we write $\lb q_1,\ldots,q_l\rb$ for the ideal of $\QQ[x]$ generated by $Q$, and we write $\VV(q_1,\ldots,q_l) = \VV(Q)$.  To any set $A \subseteq \RR^n$ we associate the following:
\begin{renumerate}
\item
the ideal of $A$,
\[
\II(A) = \{q(x)\in\QQ[x] : \text{$q(a) = 0$ for all $a\in A$}\};
\]

\item
the Zariski closure of $A$,
\[
\Zar(A) = \VV(\II(A));
\]

\item
the ring of regular functions on $A$,
\[
\QQ[A] = \{g:A\to\RR : \text{$g = q$ on $A$ for some $q(x)\in\QQ[x]$}\}.
\]
\end{renumerate}
Clearly $\QQ[A]\cong \QQ[x]/\II(A)$, and this ring is an integral domain if and only if the ideal $\II(A)$ is prime, or equivalently, $\Zar(A)$ is irreducible.  In this case we let $\QQ(A)$ denote the fraction field of $\QQ[A]$.

The {\bf membership problem} for an ideal $I \subseteq \QQ[x]$ is the problem of deciding, when given $q(x)\in\QQ[x]$, whether $q(x)\in I$ or $q(x)\notin I$.

\begin{definition}\label{def:idealMemOracle}
The {\bf ideal membership oracle for $\S$} acts as follows:
\begin{quote}
When given the following data:
\begin{itemize}
\item
a name $(p(x,y),\sigma,\alpha,\xi,\name(D)) \in \Delta_{m,n}^{d}(\rho)$ for an $\S$-polynomial map $P = p\circ F:D\to\RR^{\dim(D)-d}$,

\item
a name for a bounded rational box manifold $C$ which is open in $D$ with $\cl(C) \subseteq D$,

\item
an injection $\lambda:\{1,\ldots,d\}\to\{1,\ldots,m\}$ such that $\Pi_\lambda(D)$ is open in $\RR^d$,
\end{itemize}
if $\IF_\lambda(P;C)$ holds, and we write $\varphi:\Pi_\lambda(C)\to C$ for the section of the projection $\Pi_\lambda:\RR^m\to\RR^d$ implicitly defined by $P\circ\varphi(x_\lambda) = 0$ on $\Pi_\lambda(C)$, then the oracle decides the membership problem for the ideal $\II(\im(F\circ\varphi))$, where $F$ is defined as in Notation \ref{notation:SpolyName} and $\im(F\circ\varphi)$ if the image of the map $F\circ\varphi$.
\end{quote}
\end{definition}

\begin{lemma}\label{lemma:idealMemOracle}
The theory of $\RR_{\S}$ is decidable if and only if the approximation and ideal membership oracles for $\S$ are decidable.
\end{lemma}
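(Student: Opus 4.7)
The plan is to reduce, via Theorem~\ref{thm:charDec}, to proving that modulo a decidable approximation oracle for~$\S$, the precision and ideal membership oracles are each decidable using the other.

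The key observation, already highlighted in equation~\eqref{eq:precOracleIdeal} of the Introduction, is that the function $\varphi_i:\Pi_\lambda(C)\to\RR$ vanishes identically precisely when the coordinate polynomial $x_i\in\QQ[x,y]$ belongs to $\II(\im(F\circ\varphi))$. Hence a decidable ideal membership oracle immediately decides the precision oracle: given input $(P,C,\lambda,i)$, one simply asks the ideal membership oracle whether $x_i \in \II(\im(F\circ\varphi))$.

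For the converse, suppose the precision oracle is decidable, and we are given a polynomial $q(x,y)\in\QQ[x,y]$ together with input data $(P,C,\lambda)$ for the ideal membership oracle, with $\IF_\lambda(P;C)$ holding. The strategy is to augment the system with one new coordinate $w$ that tracks the value of $q\circ F$ along the section~$\varphi$, then read the answer off the last component of the enlarged section. For a parameter $N\in\QQ_+$, set
\[
\tilde D = D\times(-N-1,N+1), \qquad \tilde C = C\times(-N,N),
\]
and define $\tilde P:\tilde D\to\RR^{\dim(\tilde D)-d}$ by
\[
\tilde P(x,w) = \bigl(P(x),\, w - q(x,f(x))\bigr).
\]
Writing $x_{m+1}=w$, this is named in $\Delta^{d}_{m+1,n}(\rho)$ by the polynomial $(p(x,y),\,x_{m+1}-q(x,y))$ with the original $\sigma,\alpha,\xi$ unchanged, since the new variable $w$ sits in the $\RR^{\im(\xi)^c}$ factor. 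Extending $\lambda$ to $\tilde\lambda:\{1,\ldots,d\}\to\{1,\ldots,m+1\}$ with the same image, whenever $\IF_{\tilde\lambda}(\tilde P;\tilde C)$ holds the implicitly defined section takes the form $\tilde\varphi = (\varphi,\, q\circ F\circ\varphi)$, and its final component vanishes identically precisely when $q \in \II(\im(F\circ\varphi))$. We therefore submit $(\tilde P,\tilde C,\tilde\lambda,m+1)$ to the precision oracle.

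The remaining task, and the only nontrivial one, is to choose $N$ so that $\IF_{\tilde\lambda}(\tilde P;\tilde C)$ genuinely holds and then certify it effectively. Following Definition~\ref{def:IF} inductively, one peels off the affine last equation $w - q\circ F(x) = 0$ first, taking $i,j$ to be the indices of this equation and~$w$; this reduces the extended $\IF$ statement to $\IF_\lambda(P;C)$ together with the strict range condition $|q\circ F\circ\varphi| < N$ on $\Pi_\lambda(C)$. The first part holds by assumption, and since the approximation oracle makes $\S$, and hence $q\circ F\circ\varphi$, computably~$C^\infty$, Lemma~\ref{lemma:compEVT} yields a computable upper bound for $|q\circ F\circ\varphi|$ on $\Pi_\lambda(C)$ from which a sufficient $N\in\QQ_+$ is effectively obtainable. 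Equivalently, and more cleanly, one searches $N=1,2,3,\ldots$ in parallel with the IF4 verifier and halts at the first certified $N$; termination is guaranteed because any sufficiently large $N$ makes the augmented $\IF$ statement true. The main technical wrinkle is the inductive peeling argument, but once that is in hand the rest is bookkeeping.
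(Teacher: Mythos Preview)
Your forward direction (ideal membership $\Rightarrow$ precision, hence decidable theory via Theorem~\ref{thm:charDec}) matches the paper exactly.

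For the converse, the paper takes a much shorter path than you do: it simply observes that once $\Th(\RR_\S)$ is decidable, the statement ``$q(x,y)=0$ for all $(x,y)\in\im(F\circ\varphi)$'' is expressible as an $\L_\S$-sentence that can be written down effectively from the input data, so ideal membership is decided by the theory directly. There is no need to reduce to the precision oracle.

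Your route---augmenting the system with a fresh coordinate $w$ tracking $q\circ F$, then querying the precision oracle on the last component---is correct and in fact establishes the sharper equivalence ``precision $\Leftrightarrow$ ideal membership'' relative to a decidable approximation oracle, which the paper's argument does not. Two small points: first, for the base-case $\IF$ condition on the peeled equation $w-q\circ F(x)=0$ you need $|q\circ F|<N$ on all of $\cl(C)$, not merely on $\im(\varphi)$; your search-over-$N$ alternative handles this cleanly. Second, you must be slightly careful that the extension of $\lambda$ to a bijection onto $\tilde E$ in Definition~\ref{def:IFsection} can be chosen to place $w$ last, so that the inductive step really does peel off the affine equation first; this is permissible but worth saying. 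What you gain is a direct oracle-level equivalence that bypasses the expressive power of $\L_\S$; what the paper gains is a two-line argument.
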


\begin{proof}
Assume that the approximation and ideal membership oracles for $\S$ are decidable.  Using the notation of Definition \ref{def:precOracle}, suppose that $\IF_\lambda(P;C)$ holds, where $P = p\circ F$, and write $\varphi = (\varphi_1,\ldots,\varphi_m):\Pi_\lambda(C)\to C$ for the section implicitly defined by $P(x) = 0$ on $C$.  Let $i\in\{1,\ldots,m\}$.  Note that $\varphi_i(x_\lambda) = 0$ for all $x_\lambda\in\Pi_\lambda(C)$ if and only if $x_i \in \II(\im(F\circ\varphi))$, which can be decided.  Therefore the precision oracle for $\S$ is decidable, and hence the theory of $\RR_{\S}$ is decidable by Theorem \ref{thm:charDec}.

Conversely, assume that the theory of $\RR_{\S}$ is decidable.  Then the approximation oracle for $\S$ is decidable.  Now, using the notation of Definition \ref{def:idealMemOracle}, suppose that $\IF_\lambda(P;C)$ holds.  Given $q(x)\in\QQ[x]$, we can effectively write down an $\L_{\S}$-sentence stating that ``$q(x) = 0$ for all $x\in\im(F\circ\varphi)$'', which is equivalent to ``$q(x)\in\II(\im(F\circ\varphi))$''.  We can therefore decide the ideal membership oracle for $\S$.
\end{proof}

We now assume that the approximation oracle for $\S$ is decidable and try to see under what conditions we may also decide the ideal membership oracle, or at least special instances of this oracle.
\begin{equation}\label{eq:IFsetup}
\text{\parbox{5in}{
Fix the following  data:
\begin{itemize}
\item
a name $(p(x,y),\sigma,\alpha,\xi,\name(D))\in\Delta^{d}_{m,n}(\rho)$ for an $\S$-polynomial map $P = p\circ F:D\to\RR^{\dim(D)-d}$;
\item
an injection $\lambda:\{1,\ldots,d\}\to\{1,\ldots,m\}$ such that $\Pi_\lambda(D)$ is open in $\RR^d$;
\item
a bounded rational box manifold $C$ such that $C$ is open in $D$, $\cl(C)\subseteq D$, and $\IF_\lambda(P;C)$ holds.
\end{itemize}
Let $\varphi:\Pi_\lambda(C)\to C$ be the section of the projection $\Pi_\lambda$ implicitly defined by $P\circ\varphi(x_\lambda) = 0$ on $\Pi_\lambda(C)$.  Fix $E\subseteq\{1,\ldots,m\}$ such that $D = U\times\{u\}$ for some open rational box $U\subseteq\RR^E$ and $u\in\QQ^{E^c}$, and fix an injection $\lambda':\{1,\ldots,\dim(D)-d\}\to\{1,\ldots,m\}$ such that $\im(\lambda)\cup\im(\lambda') = E$.
}}
\end{equation}
Note that since $\IF_\lambda(P;C)$ holds, we have $\im(\lambda)\subseteq E$ and $\im(\lambda)\cap\im(\lambda') = \emptyset$.

For each $a\in\Pi_\lambda(C)$, we have $\det\PD{}{P}{x_{\lambda'}}\Restr{x=\varphi(a)}\neq 0$, so $\rank \PD{}{P}{x_E}\Restr{x=\varphi(a)} = \dim(D) - d$.  Write $\varphi_E = \Pi_E\circ\varphi$ and $\varphi_{E^c} = \Pi_{E^c}\circ\varphi$.  Note that $\varphi_{E^c}(a) = u$, so $\PD{}{P}{x_E}\Restr{x=\varphi(a)} = \PD{}{(P(x_E,u))}{x_E}\Restr{x_E = \varphi_E(a)}$.  Since $F(x_E,u) = (x_E,u,f(x_E,u))$, the chain rule gives
\[
\left.\PD{}{(P(x_E,u))}{x_E}\right|_{x_E = \varphi_E(a)}
=
\left.\left(\begin{matrix}
\PD{}{p}{x_E} \PD{}{p}{y}
\end{matrix}\right)\right|_{(x,y)=F\circ\varphi(a)}
\left.\left(\begin{matrix}
\id \\
\PD{}{f}{x_E}
\end{matrix}\right)\right|_{x=\varphi(a)}.
\]
Therefore
\begin{equation}\label{eq:rankp}
\left. \rank\PD{}{p}{(x_E,y)}\right|_{(x,y)=F\circ\varphi(a)} = \dim(D) - d,
\end{equation}
and hence
\begin{equation}\label{eq:rankpu}
\left. \rank \PD{}{(p,x_{E^c} - u)}{(x,y)}\right|_{(x,y) = F\circ\varphi(a)} = m-d.
\end{equation}
Thus every point of $\im(F\circ\varphi)$ is a nonsingular point of the variety $\VV(p(x,y), x_{E^c} - u)$.  Because $\im(F\circ\varphi)$ is also connected, $\im(F\circ\varphi)$ is contained in exactly one irreducible component of $\VV(p(x,y), x_{E^c} - u)$, which we shall call $X$, and $\im(F\circ\varphi)$ is disjoint from all other irreducible components of $\VV(p(x,y), x_{E^c} - u)$.

We claim that we can effectively find a set of generators for $\II(X)$.  Indeed, Becker and Weispfennig \cite[Theorem 8.101]{BW} states that we can effectively find sets of generators for all associated primes of the ideal $\lb p(x,y), x_{E^c} - u\rb$, and can thereby find sets of generators for all isolated primes, which we denote by $\mathfrak{p}_1\ldots,\mathfrak{p}_k$.  (We reference \cite{BW} because it is a widely available textbook, and its algorithm is currently implemented in the computer algebra systems AXIOM and REDUCE.  See Cox, Little and O'Shea \cite[pgs. 205-206]{CLO} for a more extensive list of references.)  Choose $a\in \QQ^d\cap\Pi_\lambda(C)$.  If $j\in\{1,\ldots,k\}$ is such that $F\circ\varphi(a)\notin\VV(\mathfrak{p}_j)$, this fact can be effectively verified since $F\circ\varphi(a)$ is a computable point and we have a set of generators for $\mathfrak{p}_j$.  But $\II(X) = \mathfrak{p}_i$ for exactly one $i\in\{1,\ldots,k\}$, and $F\circ\varphi(a)\notin\VV(\mathfrak{p}_j)$ for all $j\in\{1,\ldots,k\}$ not equal to $i$, so a set of generators for $\II(X)$ can be found by process of elimination.

If one knows a set of generators for an ideal, one has solved the membership problem for this ideal since any set of generators can be expanded to a Gr\"{o}bner basis, from which ideal membership can be tested by a suitable division algorithm.  Therefore if we are lucky enough to have $\II(\im(F\circ\varphi)) = \II(X)$, we have solved the membership problem for $\II(\im(F\circ\varphi))$.  This is always the case when $n=0$, for we then have $F(x) = x$ and $P(x) = p(x)$,
and hence $\II(\im(\varphi)) = \II(X)$.

This observation gives a new proof of Tarski's theorem based on Theorem \ref{thm:charDec}, which is proven in \cite{DJMcharDec} by a model completeness construction.  This contrasts with the previously known proofs of Tarski's theorem, which use quantifier elimination.

\begin{corollary}[{Tarski's Theorem \cite{Tarski}}]\label{cor:TarskiTheorem}
The theory of the real field is decidable.
\end{corollary}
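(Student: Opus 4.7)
The plan is to specialize the entire framework of the paper to the empty family $\S = \emptyset$. In this case $\RR_{\S}$ is literally the real ordered field and $\Th(\RR_{\S})$ is exactly the theory Tarski considered, so by Lemma \ref{lemma:idealMemOracle} it suffices to decide the approximation oracle and the ideal membership oracle for $\S$. The approximation oracle is trivial, since $\Sigma = \emptyset$ means there is nothing to approximate, so the oracle is vacuously decidable.

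For the ideal membership oracle I would unwind the definitions with $\S = \emptyset$. Any name in $\Delta^{d}_{m,n}(\rho)$ is forced to have $n = 0$ (the maps $\sigma,\alpha,\xi$ are empty), so $F(x) = x$ and $P(x) = p(x)\in\QQ[x]^{\dim(D)-d}$. The section $\varphi:\Pi_\lambda(C)\to C$ implicitly defined by $P$ then satisfies $\im(F\circ\varphi) = \im(\varphi)$, and the task reduces to deciding membership in $\II(\im(\varphi))$ given a name for $(p,D)$, a name for $C$, and the injection $\lambda$.

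At this point I would invoke the geometric picture set up immediately before the corollary. By \eqref{eq:rankpu} applied with $n = 0$, the image $\im(\varphi)$ is a real-analytic manifold of dimension $d$ sitting inside a unique $d$-dimensional irreducible component $X$ of the variety $\VV(p(x), x_{E^c}-u)$; because $\varphi$ is definable by a polynomial system, $\varphi(a)$ is a computable point for any $a\in\QQ^d\cap\Pi_\lambda(C)$, so $X$ can be identified among the isolated primes of $\lb p(x), x_{E^c}-u\rb$ by the process-of-elimination already spelled out above. The algorithm of Becker--Weispfenning \cite[Theorem 8.101]{BW} produces those generators effectively. Since $n = 0$ forces $\II(\im(\varphi)) = \II(X)$, a Gr\"obner basis computation on the generators of $\II(X)$ decides the membership problem.

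Combining the two decidable oracles via Lemma \ref{lemma:idealMemOracle} yields decidability of $\Th(\RR_{\emptyset})$, which is the corollary. There is no real obstacle here: the only thing to check is that the whole chain of constructions in this paper and in \cite{DJMcharDec} specializes correctly to the empty family, and this is automatic once one notes that $n = 0$ is permitted throughout. The point of the proposal is that Tarski's theorem drops out, without any quantifier elimination, from the $n = 0$ case of the ideal-theoretic reformulation of the precision oracle.
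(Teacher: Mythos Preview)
Your proposal is correct and follows essentially the same approach as the paper: specialize to $\S=\emptyset$, note the approximation oracle is vacuous, observe that $n=0$ is forced so $F=\id$ and $\II(\im(F\circ\varphi))=\II(X)$ by the discussion preceding the corollary, and conclude via Lemma~\ref{lemma:idealMemOracle}. The paper's proof is just a three-sentence summary of exactly what you have spelled out.
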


\begin{proof}
Here we have $\S = \emptyset$.  The approximation oracle for $\emptyset$ is trivially decidable.  In the discussion above, necessarily $n = 0$ when $\S=\emptyset$, so the ideal membership oracle is also decidable.
\end{proof}

But in general, when $\S\neq \emptyset$ and $n > 0$, we only have $\II(\im(F\circ\varphi))\supseteq\II(X)$ since $\im(F\circ\varphi) \subseteq X$, so how can we tell when the two ideals are actually the same?  For our purposes, it is most convenient to rephrase this question in terms of the transcendence degree of the field $\QQ(\im(F\circ\varphi))$ over $\QQ$.  For any field extension $K\subseteq L$, we shall write $\td_K L$ for the transcendence degree of $L$ over $K$.

Recall that if $A$ is any subset of $\RR^n$ such that $\Zar(A)$ is an irreducible variety, we have the following three ways of characterizing $\dim\II(A)$, the dimension of the ideal $\II(A)$:
\begin{enumerate}{\setlength{\itemsep}{3pt}
\item[(D1)]
The number $\dim\II(A)$ is the greatest $k\in\NN$ such that there exists a strictly increasing chain of prime ideals $\II(A) = P_0 \subset P_1 \subset \cdots P_k \subset \QQ[x]$.

\item[(D2)]
$\dim\II(A) = \td_{\QQ} \QQ(A)$.

\item[(D3)]
The number $\dim\II(A)$ equals the dimension of the tangent space to $\Zar(A)$ at any nonsingular point of $\Zar(A)$.
}\end{enumerate}

\begin{notation}
Suppose $M\subseteq\RR^n$ is a $\C$-analytic manifold and that $g:M\to\RR^m$ is a $\C$-analytic embedding.  Let $a\in M$.  Define $M_a$ to be the germ of the set $M$ at $a$, define $g_a:M_a\to\RR^{m}_{g(a)}$ to be the germ of the map $g$ at $a$, and define $\im(g_a)$ to be the germ of the manifold $\im(g)$ at $g(a)$.
\end{notation}

In order to study $\td_{\QQ}\QQ(\im(F\circ\varphi))$, it is convenient to conceptualize the field $\QQ(\im(F\circ\varphi))$ in a number of different isomorphic ways.

\begin{definitions}\label{def:isomRings}
Consider the setup given in \eqref{eq:IFsetup}.  Choose a point $a\in\QQ^d\cap\Pi_\lambda(C)$.   We define the following four rings:
\begin{itemize}
\item
$\QQ[\im(F\circ\varphi)]$

\item
$\QQ[\im(F\circ\varphi_a)]$

\item
$\QQ[F\circ\varphi(x_\lambda)]$

\item
$\QQ[F\circ\varphi_a(x_\lambda)]$
\end{itemize}
We have already defined $\QQ[\im(F\circ\varphi)]$ to be the ring of regular functions on the set $\im(F\circ\varphi)$, namely,
\[
\QQ[\im(F\circ\varphi)] = \{g:\im(F\circ\varphi)\to\RR : \text{$g = q$ on $\im(F\circ\varphi)$ for some $q\in\QQ[x,y]$}\}.
\]
Define $\QQ[\im(F\circ\varphi_a)]$ to be the set of germs at $F\circ\varphi(a)$ of the functions in $\QQ[\im(F\circ\varphi)]$.  Define $\QQ[F\circ\varphi(x_\lambda)]$ to be the ring of all functions on $\Pi_\lambda(C)$ which are rational polynomials in the function $F\circ\varphi$, namely,
\[
\QQ[F\circ\varphi(x_\lambda)] = \{q\circ F\circ\varphi:\Pi_\lambda(C)\to\RR : q(x,y)\in\QQ[x,y]\}.
\]
Define $\QQ[F\circ\varphi_a(x_\lambda)]$ to be the set of germs at $a$ of the functions in $\QQ[F\circ\varphi(x_\lambda)]$.
\end{definitions}

\begin{remark}\label{rmk:isomRings}
All four rings in Definition \ref{def:isomRings} are isomorphic to $\QQ[x,y]/\II(\im(F\circ\varphi))$, and the ideal $\II(\im(F\circ\varphi))$ is prime.  So these four isomorphic rings are integrals domains, and thus have isomorphic fields of fractions, which we write as $\QQ(\im(F\circ\varphi))$, $\QQ(\im(F\circ\varphi_a))$, $\QQ(F\circ\varphi(x_\lambda))$, $\QQ(F\circ\varphi_a(x_\lambda))$.
\end{remark}

\begin{proof}
The fact that $\QQ[\im(F\circ\varphi)]$ is isomorphic to $\QQ[x,y]/\II(\im(F\circ\varphi))$ is clear.

Define a ring homomorphism from $\QQ[x,y]/\II(\im(F\circ\varphi))$ to $\QQ[F\circ\varphi(x_\lambda)]$ by
\[
q(x,y) + \II(\im(F\circ\varphi)) \mapsto q\circ F\circ\varphi,
\]
for all $q(x,y)\in \QQ[x,y]$.  This map is clearly surjective, and its kernel is $\II(\im(F\circ\varphi))$, which is the zero element of $\QQ[x,y]/\II(\im(F\circ\varphi))$, so the map is an isomorphism.

Now define a ring homomorphism from $\QQ[F\circ\varphi(x_\lambda)]$ to $\QQ[F\circ\varphi_a(x_\lambda)]$ by
\[
q\circ F\circ\varphi \mapsto q\circ F\circ\varphi_a,
\]
for all $q(x,y)\in\QQ[x,y]$.  Again, this map is clearly surjective.  Because $\C$ is quasianalytic and $\Pi_\lambda(C)$ is a connected open set, the $\C$-analytic function $q\circ F\circ \varphi$ vanishes identically on $\Pi_\lambda(C)$ if and only if it vanishes in a neighborhood of $a$, which means that its germ at $a$ is $0$.  Therefore the kernel of our homomorphism is $\{0\}$, so the map is an isomorphism.

Since $\im(F\circ\varphi)$ is a connected $\C$-analytic manifold, a nearly identical proof shows that $\QQ[\im(F\circ\varphi)]$ is isomorphic to $\QQ[\im(F\circ\varphi_a)]$.

Finally, to see that $\II(\im(F\circ\varphi))$ is prime, note that if $q,r\in\QQ[x,y]$ are such that the product of $qr$ vanishes identically on $\im(F\circ\varphi)$, then $q$ or $r$ must vanish identically on $\im(F\circ\varphi)$ since this set is a connected $\C$-analytic manifold and $\C$ is quasianalytic.
\end{proof}

\begin{lemma}\label{lemma:tdBounds}
Consider the setup given in \eqref{eq:IFsetup}, and $X$ be the unique irreducible component of $\VV(p(x,y), x_{E^c} - u)$ containing $\im(F\circ\varphi)$.
\begin{enumerate}{\setlength{\itemsep}{3pt}
\item
We have $\td_\QQ \QQ(\im(F\circ\varphi))\leq n+d$, and equality holds if and only if $\II(\im(F\circ\varphi)) = \II(X)$.

\item
There exists a dense, open subset $V$ of $\Pi_\lambda(C)$ such that for all $a\in \QQ^d\cap V$,
\[
\td_\QQ \QQ(\im(F\circ\varphi)) \geq \td_\QQ \QQ(F\circ\varphi(a)) + d,
\]
where $\QQ(F\circ\varphi(a))$ is the field generated by the components of the point $F\circ\varphi(a)$.
}\end{enumerate}
\end{lemma}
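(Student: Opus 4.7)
For part (1), my plan is to compute $\dim\II(X)$ using characterization (D3) and then invoke (D2) together with the containment $\II(X)\subseteq\II(\im(F\circ\varphi))$. The rank formula \eqref{eq:rankpu} shows that every point of $\im(F\circ\varphi)$ is a nonsingular point of $\VV(p(x,y), x_{E^c}-u)$ whose tangent space has dimension $(m+n)-(m-d)=n+d$; since $X$ is the irreducible component containing $\im(F\circ\varphi)$ and is smooth at these points, (D3) gives $\dim\II(X)=n+d$. Both $\II(X)$ and $\II(\im(F\circ\varphi))$ are prime in $\QQ[x,y]$, so $\Zar(\im(F\circ\varphi))\subseteq X$ with both sides irreducible; standard dimension theory then gives $\dim\II(\im(F\circ\varphi))\leq n+d$ with equality if and only if $\Zar(\im(F\circ\varphi))=X$, that is, $\II(X)=\II(\im(F\circ\varphi))$. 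Applying (D2) rephrases this as the claim in (1).

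For part (2), I will work through the isomorphism $\QQ(\im(F\circ\varphi))\cong\QQ(F\circ\varphi(x_\lambda))$ of Remark \ref{rmk:isomRings}. Because $\varphi$ is a section of $\Pi_\lambda$, the $\lambda$-coordinates of $F\circ\varphi(x_\lambda)$ are literally the variables $x_{\lambda(1)},\ldots,x_{\lambda(d)}$, which are algebraically independent over $\QQ$ since a nonzero polynomial in these variables cannot vanish identically on the open set $\Pi_\lambda(C)$. Setting $r:=\td_\QQ\QQ(\im(F\circ\varphi))$, so $r\geq d$, I extend $\{x_{\lambda(1)},\ldots,x_{\lambda(d)}\}$ to a transcendence basis of $\QQ(F\circ\varphi(x_\lambda))/\QQ$ by adjoining $s:=r-d$ further components $h_1,\ldots,h_s$ of $F\circ\varphi$. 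For each of the remaining components $h$ of $F\circ\varphi$, I fix a nonzero polynomial $R_h\in\QQ[x_\lambda,y_1,\ldots,y_s,z]$ such that $R_h(x_\lambda,h_1(x_\lambda),\ldots,h_s(x_\lambda),h(x_\lambda))\equiv 0$ on $\Pi_\lambda(C)$.

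The key step is specialization at rational $a$. Expanding $R_h=\sum c^{(h)}_{\alpha,\beta}(x_\lambda)\,y^\alpha z^\beta$, at least one coefficient $c^{(h)}_{\alpha,\beta}(x_\lambda)$ is a nonzero polynomial, so the locus $Z_h$ where every such coefficient vanishes is a proper Zariski-closed subset of the connected open set $\Pi_\lambda(C)$. Taking $V:=\Pi_\lambda(C)\setminus\bigcup_h Z_h$ yields a dense open subset (the union being finite). For $a\in V\cap\QQ^d$, substituting $x_\lambda=a$ into $R_h$ produces a nonzero polynomial in $\QQ[y_1,\ldots,y_s,z]$ that vanishes at $(h_1(a),\ldots,h_s(a),h(a))$, so each $h(a)$ is algebraic over $\QQ(h_1(a),\ldots,h_s(a))$; combined with the fact that the $\lambda$-coordinates $a\in\QQ^d$ contribute nothing to the transcendence degree, this yields $\td_\QQ\QQ(F\circ\varphi(a))\leq s=r-d$, which rearranges to the desired bound. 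The only delicate point I anticipate is guaranteeing the specialization of $R_h$ at $x_\lambda=a$ remains nonzero, and that is exactly what the choice of $V$ accomplishes; the rest reduces to standard facts about Krull dimension and the ring isomorphisms recorded in Remark \ref{rmk:isomRings}.
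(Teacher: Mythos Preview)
Your Part (1) is correct and matches the paper's argument essentially verbatim: both compute $\dim\II(X) = n+d$ via (D3) and the rank formula \eqref{eq:rankpu}, use primality of the two ideals together with the containment $\II(X)\subseteq\II(\im(F\circ\varphi))$ to obtain the dimension inequality (the paper invokes (D1) explicitly here), and translate via (D2).

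Part (2), however, has a genuine gap at the specialization step. From the fact that $R_h(a,y,z)$ is a nonzero polynomial vanishing at $(h_1(a),\ldots,h_s(a),h(a))$ you conclude that $h(a)$ is algebraic over $\QQ(h_1(a),\ldots,h_s(a))$. This does not follow: what you have shown is only that $h_1(a),\ldots,h_s(a),h(a)$ are algebraically \emph{dependent} over $\QQ$. If $R_h(a,y,z)$, viewed as a polynomial in $z$ with coefficients in $\QQ[y]$, has all of its coefficients vanish at $y = (h_1(a),\ldots,h_s(a))$, then $R_h(a,h_1(a),\ldots,h_s(a),z)\equiv 0$ and you learn nothing about $h(a)$. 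Your set $V$ only rules out the coefficients $c^{(h)}_{\alpha,\beta}(x_\lambda)$ vanishing as polynomials in $x_\lambda$; it does not prevent the $z$-coefficients from vanishing after the further substitution $y = (h_1(a),\ldots,h_s(a))$. And knowing merely that each set $\{h_1(a),\ldots,h_s(a),h(a)\}$ is dependent (one $h$ at a time) does \emph{not} bound $\td_\QQ\QQ(F\circ\varphi(a))$ by $s$: already with $s=1$ one could have $h_1(a)=0$ while two of the remaining values $h(a),h'(a)$ are algebraically independent transcendentals, the ``dependence'' in each case being witnessed by the polynomial $y_1$ alone.

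The paper sidesteps this by not fixing a transcendence basis at all. It sets $t = \td_{\QQ(x_\lambda)}\QQ(F\circ\varphi(x_\lambda))$ and, for \emph{every} $(t+1)$-element subset $\gamma$ of the non-$\lambda$ coordinates, records a nonzero relation $q_\gamma(x_\lambda,z_\gamma)\in\QQ[x_\lambda,z_\gamma]$ with $q_\gamma\circ F\circ\varphi \equiv 0$. The open set $V$ is the locus where each $q_\gamma(a,z_\gamma)$ remains a nonzero polynomial in $z_\gamma$. For $a\in V\cap\QQ^d$ this yields that \emph{every} $(t+1)$-tuple of non-$\lambda$ components of $F\circ\varphi(a)$ is algebraically dependent over $\QQ$, which directly gives $\td_\QQ\QQ(F\circ\varphi(a))\leq t$ (the $\lambda$-components being rational). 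Quantifying over all $(t+1)$-subsets, rather than only those containing a fixed basis, is precisely what makes the specialization argument go through.
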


\begin{proof}
We first prove 1.  We have $\im(F\circ\varphi) \subseteq X$, so $\II(\im(F\circ\varphi) \supseteq \II(X)$.  Using (D1) and the fact that the ideals $\II(\im(F\circ\varphi))$ and $\II(X)$ are prime, it follows that $\dim\II(\im(F\circ\varphi))\leq\dim\II(X)$ and that the ideals $\II(\im(F\circ\varphi))$ and $\II(X)$ are equal if and only if they have the same dimension.  Using (D2) we see that   $\dim\II(\im(F\circ\varphi)) =\td_\QQ \QQ(\im(F\circ\varphi))$.  Using (D3), we see that $\dim\II(X) = n + d$ because for every $a\in \im(F\circ\varphi)$, $a$ is a nonsingular point of $X$ and the tangent space to $X$ at $a$ is of dimension $n+d$ by \eqref{eq:rankpu}.  Statement 1 follows.

To prove 2, note that $\QQ(\im(F\circ\varphi)) \cong \QQ(F\circ\varphi(x_\lambda))$, $\QQ(x_\lambda)$ is a subfield of $\QQ(F\circ\varphi(x_\lambda))$, and $\td_\QQ \QQ(x_\lambda) = d$, so statement 2 holds if and only if there exists a dense open subset $V$ of $\Pi_\lambda(C)$ such that for all $a\in \QQ^d\cap V$,
\[
\td_{\QQ(x_\lambda)}\QQ(F\circ\varphi(x_\lambda)) \geq \td_\QQ \QQ(F\circ\varphi(a)).
\]
Let $t = \td_{\QQ(x_\lambda)}\QQ(F\circ\varphi(x_\lambda))$, and let $\Gamma$ denote the set of all increasing maps $\gamma :\{1,\ldots,t+1\}\to\{1,\ldots,n+m\}\setminus\im(\lambda)$.
(Observe that $\Gamma$ is nonempty: we have $t\leq n$ by Statement 1, and also $d < m$, so $t+1\leq m+n-d$, and the set $\{1,\ldots,m+n\}\setminus\im(\lambda)$ has size $m+n-d$.)  For each $\gamma\in\Gamma$ let $z_\gamma := \Pi_{\gamma}(x,y)$, and note that $x_\lambda$ and $z_\gamma$ are disjoint tuples of variables.  For each $\gamma\in\Gamma$ there exists a nonzero polynomial $q_\gamma(x_\lambda,z_\gamma)\in\QQ[x_\lambda,z_\gamma]$ such that $q_\gamma\circ F\circ\varphi(x_\lambda) = 0$ for all $x_\lambda\in \Pi_\lambda(C)$, where we consider $q_\gamma$ to be a function on $\RR^{m+n}$ which only depends on the coordinates $(x_\lambda,z_\gamma)$.  The set of all $a\in\Pi_\lambda(C)$ such that $q_\gamma(a,z_\gamma)$ is the zero polynomial in $z_\gamma$ is a proper subvariety of $\Pi_\lambda(C)$, so
\[
V = \bigcap_{\gamma\in\Gamma} \{a\in\Pi_\lambda(C) : q_\gamma(a,z_\gamma) \not\equiv 0\}
\]
is a dense open subset of $\Pi_\lambda(C)$.  For each $a\in \QQ^d\cap V$, we have $\td_\QQ \QQ(F\circ\varphi(a))\leq t$ because $\Pi_\lambda(F\circ\varphi(a)) = a\in\QQ^d$ and $q_\gamma\circ F\circ\varphi(a) = 0$ for all $\gamma\in\Gamma$.
\end{proof}

Because of Lemma \ref{lemma:tdBounds}, we are interested in being able to determine when
\[
\td_{\QQ}\QQ(\im(F\circ\varphi)) = n+d.
\]
To do this, we will first give a necessary condition for this to be the case, and will then specially construct $\S$ so that this necessary condition is also a sufficient condition.  To help understand the idea behind this necessary condition, consider the following example.

\begin{example}\label{ex:distCond}
Consider a name $(p(x,y),\sigma,\alpha,\xi,\name(D)) \in \Delta^{1}_{3,2}(\rho)$ for an $\S$-polynomial map $P = p\circ F:D\to\RR^2$, where $D$ is open in $\RR^3$.   Write $x = (x_1,x_2,x_3)$, $y = (y_1,y_2)$, $p = (p_1,p_2)$, and $P = (P_1,P_2)$, and suppose that $F$ is of the form
\[
F(x) = (x_1, x_2, x_3, f_1(x_1), f_2(x_2)).
\]
Suppose that $\IF_\lambda(P;C)$ holds for some $C\subseteq D$ and for the projection
\[
\Pi_{\lambda}(x_1,x_2,x_2) = x_3.
\]
Let $\varphi:\Pi_\lambda(C)\to C$ be the implicitly defined section, and write
\[
\varphi(x_3) = (\varphi_1(x_3), \varphi_2(x_3), x_3).
\]
Thus the matrix
\begin{equation}\label{eq:Pjacobian}
\PD{}{P}{(x_1,x_2)}(x)
=
\left(
\begin{matrix}
\PD{}{p_1}{x_1}\circ F(x) + \PD{}{p_1}{y_1}\circ F(x) \PD{}{f_1}{x_1}(x_1)
    &
    \PD{}{p_1}{x_2}\circ F(x) + \PD{}{p_1}{y_2}\circ F(x) \PD{}{f_2}{x_2}(x_2)
\\
\PD{}{p_2}{x_1}\circ F(x) + \PD{}{p_2}{y_1}\circ F(x) \PD{}{f_1}{x_1}(x_1)
    &
    \PD{}{p_2}{x_2}\circ F(x) + \PD{}{p_2}{y_2}\circ F(x) \PD{}{f_2}{x_2}(x_2)
\end{matrix}
\right)
\end{equation}
is nonsingular on $\im(\varphi)$.

\begin{description}{\setlength{\itemsep}{5pt}
\item[Assumption]
Assume that $\sigma(1) = \sigma(2)$, $\alpha(1) = \alpha(2)$, and $\varphi_1 = \varphi_2$.

\item[Question]
Lemma \ref{lemma:tdBounds} shows that $\td_{\QQ}\QQ(\im(F\circ\varphi)) \leq 2+1 = 3$.  Is it possible for equality to hold?

\item[Answer] No.
}\end{description}
The assumption that $\sigma(1) = \sigma(2)$ and $\alpha(1) = \alpha(2)$ means that $f_1$ and $f_2$ are the same derivative of the same function in $\S$, and hence $f_1 = f_2$.  The additional assumption that $\varphi_1 = \varphi_2$ says much more, for it allows us to perform the substitution $(x_2,y_2) = (x_1,y_1)$, from which we can see that $\td_{\QQ}\QQ(\im(F\circ\varphi)) < 3$, as follows.

Define
\begin{eqnarray*}
\tld{p}(x_1,x_3,y_1)
    & = &
    p(x_1,x_1,x_3,y_1,y_1),
\\
\bar{F}(x_1,x_3)
    & = &
    F(x_1,x_1,x_3,f_1(x_1),f_1(x_1)),
\\
\tld{P}(x_1,x_3)
    & = &
    \tld{p}(x_1,x_1,x_3,f_1(x_1),f_1(x_1)),
\\
\bar{\varphi}(x_3)
    & = &
    (\varphi_1(x_3),x_3),
\end{eqnarray*}
and write $\tld{p} = (\tld{p}_1,\tld{p}_2)$ and $\tld{P} = (\tld{P}_1,\tld{P}_2)$.  Note that $\PD{}{\tld{P}}{x_1}$ is the $2\times 1$ matrix obtained by summing the columns of \eqref{eq:Pjacobian}.  Now fix $a\in\QQ^d\cap\Pi_\lambda(C)$.  Since \eqref{eq:Pjacobian} is nonsingular, $\PD{}{\tld{P}}{x_1}\circ\varphi(a)$ must have rank 1, so either $\PD{}{\tld{P}_1}{x_1}\circ\varphi(a)\neq 0$ or $\PD{}{\tld{P}_2}{x_1}\circ\varphi(a)\neq 0$.  Both cases are symmetric, so assume that $\PD{}{\tld{P}_1}{x_1}\circ\varphi(a)\neq 0$.  Define
\[
\bar{p} = \tld{p}_1
\quad\text{and}\quad
\bar{P} = \tld{P}_1;
\]
thus $\bar{P} = \bar{p}_1\circ\bar{F}$.
Then $\bar{\varphi}_a$ is a germ of the section of the projection $(x_1,x_3)\mapsto x_3$ implicitly defined by the nonsingular equation $\bar{P}(x_1,x_3) = 0$, so by Lemma \ref{lemma:tdBounds},
\[
\td_{\QQ}\QQ(\bar{F}\circ\bar{\varphi}_a(x_\lambda)) \leq 1 + 1 = 2.
\]
For any $q(x,y)\in\QQ[x,y]$, define $\bar{q}(x_1,x_3,y_1) = q(x_1,x_1,x_3,y_1,y_1)$, and note that $q\circ F\circ\varphi = 0$ if and only if $\bar{q}\circ\bar{F}\circ\bar{\varphi} = 0$, that is, $q\in\II(\im(F\circ\varphi))$ if and only if $\bar{q}\in\II(\im(\bar{F}\circ\bar{\varphi}))$.  Thus the map $q\mapsto\bar{q}$ is an isomorphism from $\QQ(F\circ\varphi_a(x_\lambda))$ to $\QQ(\bar{F}\circ\bar{\varphi}_a(x_\lambda))$, and hence
\[
\td_{\QQ}\QQ(\im(F\circ\varphi)) = \td_{\QQ}\QQ(\bar{F}\circ\bar{\varphi}_a(x_\lambda)) \leq 2,
\]
as claimed.
\end{example}

This example motivates the following definition.

\begin{definition}\label{def:distCond}
Consider the setup given in \eqref{eq:IFsetup}.  We say that $\varphi$ satisfies the {\bf $(\sigma,\alpha,\xi)$-distinctness condition} if for all $j_1,j_2\in\{1,\ldots,n\}$ such that $\sigma(j_1) = \sigma(j_2)$, $\alpha(j_1) = \alpha(j_2)$, and $j_1\neq j_2$, the functions $\Pi_{\xi(j_1)}\circ\varphi$ and $\Pi_{\xi(j_2)}\circ\varphi$ are not identically equal.
\end{definition}

\begin{proposition}\label{prop:distCond}
Consider the setup given in \eqref{eq:IFsetup}.  If $\td_{\QQ}\QQ(\im(F\circ\varphi)) = n+d$, then $\varphi$ satisfies the $(\sigma,\alpha,\xi)$-distinctness condition.
\end{proposition}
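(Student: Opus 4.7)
The plan is to prove the contrapositive: assuming $\varphi$ violates the $(\sigma,\alpha,\xi)$-distinctness condition, I will show $\td_\QQ\QQ(\im(F\circ\varphi))\leq n+d-1$. The argument adapts the substitution carried out in Example \ref{ex:distCond} to the general setting.

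First I would unpack the hypothesis. Pick $j_1\neq j_2$ in $\{1,\ldots,n\}$ with $\sigma(j_1)=\sigma(j_2)$, $\alpha(j_1)=\alpha(j_2)$, and $\Pi_{\xi(j_1)}\circ\varphi\equiv\Pi_{\xi(j_2)}\circ\varphi$ on $\Pi_\lambda(C)$. Writing $\xi(j_1)=(i_1,\ldots,i_k)$ and $\xi(j_2)=(i'_1,\ldots,i'_k)$ (their images are disjoint by clause (2)(b) of Definition \ref{def:SpolyName}), this forces the coordinate identities $\varphi_{i_l}\equiv\varphi_{i'_l}$ on $\Pi_\lambda(C)$ for $l=1,\ldots,k$ and, since $f_{j_1}$ and $f_{j_2}$ are the same partial derivative $\PDn{\alpha(j_1)}{S_{\sigma(j_1)}}{x}$ composed with projections that agree on $\im\varphi$, also $f_{j_1}\circ\varphi\equiv f_{j_2}\circ\varphi$.

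Next I would introduce the contraction substitution $\tau\colon\QQ[x,y]\to\QQ[\bar x,\bar y]$, with $\bar x:=x\setminus\{x_{i'_l}:l=1,\ldots,k\}$ and $\bar y:=y\setminus\{y_{j_2}\}$, defined by $\tau(x_{i'_l})=x_{i_l}$, $\tau(y_{j_2})=y_{j_1}$, and $\tau=\id$ on all remaining generators. Deleting the $j_2$-th entries of $\sigma,\alpha,\xi$ and renumbering $\{1,\ldots,m\}\setminus\{i'_l\}$ to $\{1,\ldots,m-k\}$ produces reduced data $\bar\sigma,\bar\alpha,\bar\xi$ which, together with $\bar p:=\tau(p)$, names an $\S$-polynomial map $\bar P=\bar p\circ\bar F$ built from only $n-1$ functions of $\S$, where $\bar F(\bar x)=(\bar x,\bar f(\bar x))$ and $\bar f_j=\PDn{\bar\alpha(j)}{S_{\bar\sigma(j)}}{x}\circ\Pi_{\bar\xi(j)}$. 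Letting $\bar\varphi$ denote the restriction of $\varphi$ to the coordinates indexed by $\{1,\ldots,m\}\setminus\{i'_1,\ldots,i'_k\}$, the pointwise identity
\[
q\circ F\circ\varphi(x_\lambda)=\tau(q)\circ\bar F\circ\bar\varphi(x_\lambda) \qquad (q\in\QQ[x,y])
\]
follows by direct substitution using the identities established above. Combined with the surjectivity of $\tau$, this gives $\II(\im(F\circ\varphi))=\tau^{-1}\bigl(\II(\im(\bar F\circ\bar\varphi))\bigr)$, and hence an induced isomorphism of $\QQ$-algebras $\QQ[\im(\bar F\circ\bar\varphi)]\cong\QQ[\im(F\circ\varphi)]$, which preserves transcendence degree over $\QQ$.

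Finally, I would bound $\td_\QQ\QQ(\im(\bar F\circ\bar\varphi))\leq(n-1)+d$ by applying Lemma \ref{lemma:tdBounds}(1) to the reduced setup. The hardest part of the argument will be exactly what Example \ref{ex:distCond} had to navigate: after $\tau$, the reduced polynomial $\bar p$ generically produces an \emph{overdetermined} system at $\im(\bar F\circ\bar\varphi)$, because the chain rule forces $k+1$ columns of the original Jacobian $\PD{}{P}{(x_E,y)}$ to be collapsed or summed in the reduced Jacobian. I would handle this by working in the germ at a rational base point $a\in\QQ^d\cap\Pi_\lambda(C)$: the rank equality \eqref{eq:rankp} shows the reduced Jacobian still attains maximal row rank $\dim(\bar D)-d$ at $\bar F\circ\bar\varphi(a)$, so a square nonsingular subsystem $\bar p^{*}$ of $\bar p$ of size $\dim(\bar D)-d$ can be extracted. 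Applying Remarks \ref{rmk:IFsection}(3) then produces a rational box manifold $\bar C$ on which $\IF_{\bar\lambda}(\bar P^{*};\bar C)$ holds and whose implicit section agrees with $\bar\varphi$ locally at $a$. Lemma \ref{lemma:tdBounds}(1) applied to this reduced instance yields $\td_\QQ\QQ(\im(\bar F\circ\bar\varphi))\leq(n-1)+d$, and transferring through the isomorphism above completes the argument with $\td_\QQ\QQ(\im(F\circ\varphi))\leq n+d-1<n+d$, contradicting the hypothesis.
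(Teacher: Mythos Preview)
Your approach is essentially the paper's own: prove the contrapositive by performing the substitution of Example~\ref{ex:distCond} and then invoking Lemma~\ref{lemma:tdBounds}(1) on the reduced data. The only structural difference is that the paper packages the reduction into Lemma~\ref{lemma:distCond}, which quotients by the \emph{entire} equivalence relation $\approx_{(\sigma,\alpha,\xi,\varphi)}$ at once (giving $\bar n$ equal to the number of classes), whereas you collapse a single pair $j_1,j_2$ to reach $\bar n=n-1$. One step suffices for the proposition; the paper's fuller reduction is set up because it is reused verbatim in the proof of Theorem~\ref{thm:genDec}.

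One technical point deserves care. When you delete the coordinates $x_{i'_1},\ldots,x_{i'_k}$, you must ensure none of them lies in $\im(\lambda)$, or else $\bar\varphi$ is no longer a section of a coordinate projection and the setup \eqref{eq:IFsetup} fails for the reduced data. Remark~\ref{rmk:equivRelations}.5 guarantees that for each $l$ at most one of $i_l,i'_l$ belongs to $\im(\lambda)$, but it could be $i'_l$ for some values of $l$ and $i_l$ for others, so simply swapping $j_1\leftrightarrow j_2$ may not suffice. The paper handles this by choosing the set of representatives $\im(\mu)$ to contain $\im(\lambda)$; in your one-step version you should likewise keep, for each $l$, whichever of $i_l,i'_l$ lies in $\im(\lambda)$ (and either one otherwise). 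With that adjustment your rank argument for extracting the square subsystem $\bar P^{*}$ goes through exactly as in the proof of Lemma~\ref{lemma:distCond}: since $\bar P=P\circ\Phi'$ with $\Phi'$ an immersion and $\varphi=\Phi'\circ\bar\varphi$, the kernel of $\PD{}{\bar P}{\bar x_{\bar E}}$ at $\bar\varphi(a)$ is exactly $T\im(\bar\varphi)$, giving the required rank $\dim(\bar D)-d$.
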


Proposition \ref{prop:distCond} will be proven using the substitution technique demonstrated in Example \ref{ex:distCond}.  In order to write down a general proof, we introduce the following combinatorial terminology.

\begin{definition}\label{def:equivRelations}
Consider the setup given in \eqref{eq:IFsetup}.  We define three equivalence relations on the set $\{1,\ldots,n\}$.  For each $j_1,j_2\in\{1,\ldots,n\}$, define
\begin{eqnarray*}
j_1\approx_{\bar{\sigma}} j_2
    & \text{if and only if}
    &\sigma(j_1) = \sigma(j_2);
\\
j_1\approx_{(\sigma,\alpha)}j_2
    & \text{if and only if}
    & \text{$\sigma(j_1) = \sigma(j_2)$ and $\alpha(j_1) = \alpha(j_2)$};
\\
j_1\approx_{(\sigma,\alpha,\xi,\varphi)}j_2
    & \text{if and only if}
    & \text{$\sigma(j_1) = \sigma(j_2)$, $\alpha(j_1) = \alpha(j_2)$ and $\Pi_{\xi(j_1)}\circ\varphi = \Pi_{\xi(j_2)}\circ\varphi$}.
\end{eqnarray*}
Recall from Notation \ref{notation:SpolyName} that $\im(\xi) = \bigcup_{j=1}^{n}\im(\xi(j))$.  Define $\gamma:\{1,\ldots,m\} \to \{0,\ldots,n\}$ by
\[
\gamma(i)
=
\begin{cases}
j,
    & \text{if $i\in\im(\xi(j))$ for some (necessarily unique) $j\in\{1,\ldots,n\}$,} \\
0,
    & \text{if $i\in\{1,\ldots,m\}\setminus\im(\xi)$.}
\end{cases}
\]
Any equivalence relation $\approx$ on the set $\{1,\ldots,n\}$ which refines $\approx_{\sigma}$ induces an equivalence relation $\sim$ on the set $\{1,\ldots,m\}$ in the following way: if $i_1,i_2\in\im(\xi)$, define
\[
i_1 \sim i_2
\]
if and only if $\gamma(i_1) \approx \gamma(i_2)$ and there exists $k\in
\{1,\ldots,\eta\circ\sigma\circ\gamma(i_1)\}$ such that
\[
i_1 = \xi(\gamma(i_1))(k)\quad\text{and}\quad i_2 = \xi(\gamma(i_2))(k);
\]
if $i_1,i_2\in\{1,\ldots,m\}$ are such that $i_1\not\in\im(\xi)$ or $i_2\not\in\im(\xi)$, define
\[
i_1 \sim i_2 \quad \text{if and only if} \quad  i_1 = i_2.
\]
We write $\sim_{(\sigma,\alpha)}$ and $\sim_{(\sigma,\alpha,\xi,\varphi)}$ for the equivalence relations on $\{1,\ldots,m\}$ induced by $\approx_{\sigma,\alpha)}$ and $\approx_{(\sigma,\alpha,\xi,\varphi)}$, respectively.
\end{definition}

\begin{remarks}\label{rmk:equivRelations}
\hfill
\begin{enumerate}{\setlength{\itemsep}{5pt}
\item
The function $\varphi$ satisfies the $(\sigma,\alpha,\xi)$-distinctness condition if and only if every $\approx_{(\sigma,\alpha,\xi)}$-equivalence class has size $1$.

\item
The equivalence relation $\approx_{(\sigma,\alpha,\xi,\varphi)}$ refines $\approx_{(\sigma,\alpha)}$.  It follows that $\sim_{(\sigma,\alpha,\xi,\varphi)}$ refines $\sim_{(\sigma,\alpha)}$.

\item
Introduce a new variable $y_0$.  The map $\gamma:\{1,\ldots,m\}\to\{0,1,\ldots,n\}$ is defined so that for all $i\in\{1,\ldots,m\}$,
\begin{equation}\label{eq:differentiate}
\PD{}{P}{x_i}(x) = \PD{}{p}{x_i}(x,f(x)) + \PD{}{p}{y_{\gamma(i)}}(x,f(x))\PD{}{f_{\gamma(i)}}{x_i}(x),
\end{equation}
with the understanding that $\PD{}{p}{y_0} = 0$ since $p(x,y)$ does not depend $y_0$, as it is a newly introduced variable.  Thus \eqref{eq:differentiate} is to be interpreted as $\PD{}{P}{x_i}(x) = \PD{}{p}{x_i}(x,f(x))$ when $i\in\{1,\ldots,m\}\backslash\im(\xi)$.

\item
If $i_1\sim_{(\sigma,\alpha,\xi,\varphi)} i_2$, then $f_{\gamma(i_1)} = f_{\gamma(i_2)}$, and $x_{i_1} = x_{i_2}$ for all $x = (x_1,\ldots,x_m)\in\im(\varphi)$.

\item
If $i_1,i_2\in\im(\lambda)$ and $i_1\neq i_2$, then $i_1 \not\sim_{(\sigma,\alpha,\xi,\varphi)} i_2$.

\begin{proof}
Let $i_1,i_2\in\im(\lambda)$, and suppose that $i_1 \sim_{(\sigma,\alpha,\xi,\varphi)} i_2$.  Then $x_{i_1} = x_{i_2}$ for all $x\in\im(\varphi)$.  But $\Pi_\lambda(\im(\varphi)) = \Pi_\lambda(C)$, which is open in $\RR^d$, so necessarily $i_1 = i_2$.
\end{proof}
}\end{enumerate}
\end{remarks}

\begin{lemma}\label{lemma:distCond}
Consider the setup given in \eqref{eq:IFsetup}.
Fix $a\in\QQ^d\cap\Pi_\lambda(C)$.  Let $\bar{m}$ be the number of $\sim_{(\sigma,\alpha,\xi,\varphi)}$-equivalence classes on $\{1,\ldots,m\}$, and let $\bar{n}$ be the number of $\approx_{(\sigma,\alpha,\xi,\varphi)}$-equivalence classes on $\{1,\ldots,n\}$.  Then there exist
\begin{itemize}
\item
coordinate projections $\Pi':\RR^m\to\RR^{\bar{m}}$ and $\Pi'':\RR^n\to\RR^{\bar{n}}$;
\begin{quote}
Write $\bar{x} = \Pi'(x)$, $\bar{y} = \Pi''(y)$, $\bar{C} = \Pi'(C)$, $\bar{\varphi} = \Pi'\circ\varphi:\Pi_\lambda(C) \to \bar{C}$, and $\bar{D} = \Pi'(D)$, and write $\bar{D} = \bar{U}\times\{\bar{u}\}$ for some $\bar{E}\subseteq\{1,\ldots,\bar{m}\}$ and open rational box $\bar{U}\subseteq\RR^{\bar{E}}$, and $\bar{u}\in\QQ^{\bar{E}^c}$, where $\bar{E}^c = \{1,\ldots,\bar{m}\}\setminus\bar{E}$.
\end{quote}

\item
polynomial maps $\Phi':\RR^{\bar{m}}\to\RR^m$ and $\Phi'':\RR^{\bar{n}}\to\RR^n$ which are sections of these projections;

\item
injections $\bar{\lambda}:\{1,\ldots,d\}\to\{1,\ldots,\bar{m}\}$ and $\bar{\lambda}':\{1,\ldots,\bar{m}-d\}\to\{1,\ldots,\bar{m}\}$ such that $\Pi_\lambda = \Pi_{\bar{\lambda}}\circ\Pi'$, $\im(\bar{\lambda}) \cap \im(\bar{\lambda}') = \emptyset$, and $\im(\bar{\lambda}) \cup \im(\bar{\lambda}') = \bar{E}$;

\item
a name $(\bar{p}(\bar{x},\bar{y}), \bar{\sigma},\bar{\alpha},\bar{\xi},\name(\bar{C})) \in \Delta^{d}_{\bar{m},\bar{n}}(\rho)$ for an $\S$-polynomial map $\bar{P} = \bar{p}\circ\bar{F}:\bar{C}\to\RR^{\bar{m}-d}$
\end{itemize}
such that $\bar{P}\circ\bar{\varphi}(x_\lambda) = 0$ on $\Pi_{\lambda}(C)$ and
\begin{equation}\label{eq:detBarPu}
\left.
\det\PD{}{(\bar{P}(\bar{x}_{\bar{E}},\bar{u}), \bar{x}_{\bar{E}^c} - \bar{u})}{(\bar{x}_{\bar{\lambda}'}, \bar{x}_{\bar{E}^c})}
\right|_{\bar{x}=\bar{\varphi}(a)} \neq 0,
\end{equation}
the map $\bar{\varphi}$ satisfies the $(\bar{\sigma},\bar{\alpha},\bar{\xi})$-distinctness condition, and the following is a commutative diagram of $\C$-analytic isomorphisms of germs of $\C$-analytic manifolds:
\begin{equation}\label{eq:distCondDiagram}
\xymatrix@C+25pt{
    & \im(\varphi_a)
        \ar@<0.5ex>[ld]^-{\Pi_{\lambda}}
        \ar@<0.5ex>[r]^-{F}
        \ar@<0.5ex>[dd]^-{\Pi'}
    & \im(F\circ\varphi_a)
        \ar@<0.5ex>[l]^-{\Pi_m}
        \ar@<0.5ex>[dd]^-{\Pi'\times\Pi''}
\\
\RR^{d}_{a}
    \ar@<0.5ex>[ru]^-{\varphi}
    \ar@<0.5ex>[rd]^-{\bar{\varphi}}
\\
    & \im(\bar{\varphi}_a)
        \ar@<0.5ex>[lu]^-{\Pi_{\bar{\lambda}}}
        \ar@<0.5ex>[r]^-{\bar{F}}
        \ar@<0.5ex>[uu]^-{\Phi'}
    & \im(\bar{F}\circ\bar{\varphi}_a)
        \ar@<0.5ex>[l]^-{\Pi_{\bar{m}}}
        \ar@<0.5ex>[uu]^-{\Phi'\times\Phi''} \quad.
}
\end{equation}
\end{lemma}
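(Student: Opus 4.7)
My plan is to ``collapse'' the setup along the two equivalence relations. First I would pick a set of representatives $S\subseteq\{1,\ldots,m\}$ for the $\sim_{(\sigma,\alpha,\xi,\varphi)}$-classes and a set of representatives $T\subseteq\{1,\ldots,n\}$ for the $\approx_{(\sigma,\alpha,\xi,\varphi)}$-classes, preferring representatives in $E$ whenever a $\sim$-class meets $E$ (by Remark \ref{rmk:equivRelations}(5) each $\lambda(i)$ is already a singleton $\sim$-class lying in $E$, so $\lambda(i)\in S$). Let $\bar{m}=|S|$ and $\bar{n}=|T|$, take $\Pi'$ and $\Pi''$ to be the coordinate projections onto $S$ and $T$, and let $\Phi'$ and $\Phi''$ be the linear sections that duplicate each representative's value onto every coordinate in its class. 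Set $\bar\sigma=\sigma|_T$, $\bar\alpha=\alpha|_T$, and $\bar\xi(j)(k)$ equal to the $S$-index of the representative of $\xi(j)(k)$; this is well defined because the definition of $\sim_{(\sigma,\alpha,\xi,\varphi)}$ forces $\xi(j)(k)\sim\xi(j')(k)$ whenever $j\approx j'$.

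The key intertwining identity will be $F\circ\Phi'=(\Phi'\times\Phi'')\circ\bar F$, where $\bar F(\bar x)=(\bar x,\bar f(\bar x))$ is built from $(\bar\sigma,\bar\alpha,\bar\xi)$; this reduces to the pointwise check $\Pi_{\bar\xi(\bar\jmath)}(\bar x)=\Pi_{\xi(\bar\jmath)}(\Phi'(\bar x))$. Combined with the coordinatewise equality $\Phi'\circ\bar\varphi=\varphi$ on $\Pi_\lambda(C)$ (which is immediate from the definition of $\sim_{(\sigma,\alpha,\xi,\varphi)}$), this forces $\bar P\circ\bar\varphi=P\circ\varphi=0$ for any $\bar p$ obtained by selecting components of $p(\Phi'(\bar x),\Phi''(\bar y))$, and it makes the diagram \eqref{eq:distCondDiagram} commute tautologically: the germ-inverses of $\Pi'$ and $\Pi'\times\Pi''$ on the relevant germs are supplied by $\Phi'$ and $\Phi'\times\Phi''$, because all coordinates within a $\sim$-class agree on $\im(\varphi)$. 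The $(\bar\sigma,\bar\alpha,\bar\xi)$-distinctness condition holds automatically, since two distinct representatives in $T$ cannot simultaneously satisfy $\sigma(j_1)=\sigma(j_2)$, $\alpha(j_1)=\alpha(j_2)$ and $\Pi_{\xi(j_1)}\circ\varphi=\Pi_{\xi(j_2)}\circ\varphi$ (otherwise they would be $\approx_{(\sigma,\alpha,\xi,\varphi)}$-equivalent), and the translation $\Pi_{\bar\xi(\bar\jmath)}\circ\bar\varphi=\Pi_{\xi(\bar\jmath)}\circ\varphi$ transfers the inequality to the reduced data.

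The hard part will be producing a valid $\bar p$ of size $\bar m-d$ together with an injection $\bar\lambda'$ so that \eqref{eq:detBarPu} holds. I would set $\bar\lambda(i)$ equal to the $S$-index of $\lambda(i)$, so that $\Pi_\lambda=\Pi_{\bar\lambda}\circ\Pi'$. The chain rule then gives
\[
\PD{}{(P\circ\Phi')}{\bar x_{\bar E}}(\bar\varphi(a))=\PD{}{P}{x_E}(\varphi(a))\cdot A,
\]
where $A$ is the $|E|\times\bar m$ class-membership matrix, whose columns are linearly independent by construction. The hypothesis $\IF_\lambda(P;C)$ makes $\PD{}{P}{x_{\lambda'}}(\varphi(a))$ invertible, so $\PD{}{P}{x_E}(\varphi(a))$ is surjective of rank $\dim(D)-d$; combined with the column rank of $A$, the composite has rank exactly $\bar m-d$. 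I would then select $\bar m-d$ rows yielding an invertible $(\bar m-d)\times(\bar m-d)$ submatrix; this simultaneously picks out the components of $p(\Phi'(\bar x),\Phi''(\bar y))$ to take as $\bar p$ and determines the injection $\bar\lambda'$. The block decomposition used to derive \eqref{eq:rankpu} upgrades the invertibility of this submatrix to the full determinant condition \eqref{eq:detBarPu}.
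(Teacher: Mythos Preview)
Your construction of $\Pi',\Pi'',\Phi',\Phi''$, $\bar\sigma,\bar\alpha,\bar\xi$, the identity $F\circ\Phi'=(\Phi'\times\Phi'')\circ\bar F$, and the verification of $\Phi'\circ\bar\varphi=\varphi$ and of the $(\bar\sigma,\bar\alpha,\bar\xi)$-distinctness condition are exactly what the paper does. The commutative diagram also follows for the reasons you give.

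The genuine gap is in your rank argument. From the chain rule you obtain
\[
\PD{}{(P\circ\Phi')}{\bar x}(\bar\varphi(a))=\PD{}{P}{x}(\varphi(a))\cdot d\Phi',
\]
with the first factor surjective of rank $|E|-d$ and the second injective of rank $\bar m$. You then conclude that the composite has rank $\bar m-d$. This inference is invalid: a surjection composed with an injection can have any rank between $0$ and the minimum of the two ranks (take $B=(1\ 0)$ and $A={0\choose 1}$, so $BA=0$). Nothing in your setup forces $\ker(dP_b)\cap\im(d\Phi')$ to have the correct dimension.

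The paper's argument is sharper and avoids this trap. It does not look at all $\bar E$-columns at once; it restricts attention to the columns indexed by $\im(\bar\lambda')=\bar E\setminus\im(\bar\lambda)$. The key (implicit) observation is that no $\sim_{(\sigma,\alpha,\xi,\varphi)}$-class can meet both $E$ and $E^c$: for $i\in E^c$ the value $\varphi_i=u_i$ lies on the boundary $\{\pm\rho\}$ of the relevant interval, while for $j\in E\cap\im(\xi)$ the value $\varphi_j$ lies in the open interior, so $\varphi_i\neq\varphi_j$ always. Consequently each $\bar\lambda'$-class is a subset of $\im(\lambda')$, and since $\im(\lambda)\subseteq\im(\mu)$ these classes are pairwise disjoint subsets of $\im(\lambda')$. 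Hence the $\bar\lambda'$-columns of $\PD{}{\tilde P}{\bar x}$ are sums over disjoint groups of the columns of the nonsingular matrix $\PD{}{P}{x_{\lambda'}}(b)$, and disjoint sums of linearly independent vectors remain linearly independent. This gives rank exactly $|\bar E|-d$, after which one selects $|\bar E|-d$ rows via an increasing map $\delta$ to define $\bar p=\Pi_\delta\circ\tilde p$.

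A smaller point: your dimensions are off in places. The target of $\bar P$ has size $\dim(\bar D)-d=|\bar E|-d$, not $\bar m-d$; and $\bar\lambda'$ is not something you ``determine'' by selecting a submatrix---it is fixed as any injection with image $\bar E\setminus\im(\bar\lambda)$. Only rows (components of $\tilde p$) are selected.
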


\begin{proof}
By Remark \ref{rmk:equivRelations}.5 we may fix an increasing map $\mu:\{1,\ldots,\bar{m}\} \to \{1,\ldots,m\}$ such that $\im(\lambda)\subseteq \im(\mu)$ and such that $\im(\mu)$ is a set of representatives for the $\sim_{(\sigma,\alpha,\xi,\varphi)}$-equivalence classes on $\{1,\ldots,m\}$.  Write $\Pi':\RR^m\to\RR^{\bar{m}}$ for the projection $\Pi_\mu:\RR^m\to\RR^{\bar{m}}$.  Write $M_1,\ldots,M_{\bar{m}}$ for the $\sim_{(\sigma,\alpha,\xi,\varphi)}$-equivalence classes on $\{1,\ldots,m\}$, where $\mu(i)\in M_i$ for all $i\in\{1,\ldots,\bar{m}\}$.  Let $\Phi':\RR^{\bar{m}}\to\RR^m$ be the section of the projection $\Pi'$ defined by
\[
\Phi'(x_1,\ldots,x_{\bar{m}})
=
\left((x_1)_{i\in M_1}, \ldots, (x_{\bar{m}})_{i\in M_{\bar{m}}}\right).
\]

Similarly, fix an increasing map $\nu:\{1,\ldots,\bar{n}\} \to \{1,\ldots,n\}$ such that $\im(\nu)$ is a set of representatives for the $\approx_{(\sigma,\alpha,\xi,\varphi)}$-equivalence classes on $\{1,\ldots,n\}$.  Write $\Pi'':\RR^n\to\RR^{\bar{n}}$ for the projection $\Pi_\nu:\RR^n\to\RR^{\bar{n}}$.  Write $N_1,\ldots,N_{\bar{n}}$ for the $\approx_{(\sigma,\alpha,\xi,\varphi)}$-equivalence classes on $\{1,\ldots,n\}$, where $\nu(j)\in N_j$ for all $j\in\{1,\ldots,\bar{n}\}$.  Let $\Phi'':\RR^{\bar{n}}\to\RR^n$ be the section of the projection $\Pi''$ defined by
\[
\Phi''(y_1,\ldots,y_{\bar{n}})
=
\left((y_1)_{j\in N_1}, \ldots, (y_{\bar{n}})_{j\in N_{\bar{n}}}\right).
\]

As in the statement of the lemma, write $\bar{x} = \Pi'(x)$, $\bar{y} = \Pi''(y)$, $\bar{C} = \Pi'(C)$, $\bar{\varphi} = \Pi'\circ\varphi:\Pi_\lambda(C) \to C$, and $\bar{D} = \Pi'(D)$.  Write $\bar{E} = \mu^{-1}(E)$, fix an injection
$\bar{\lambda}':\{1,\ldots,\dim(\bar{D})-d\}\to\{1,\ldots,\bar{m}\}$ such that $\im(\bar{\lambda}) \cap \im(\bar{\lambda}') = \emptyset$ and $\im(\bar{\lambda}) \cup \im(\bar{\lambda}') = \bar{E}$, and write $\bar{D} = \bar{U}\times \bar{u}$ for an open rational box $\bar{U}\subseteq\RR^{\bar{E}}$ and $\bar{u}\in\QQ^{\bar{E}^c}$.

Put $\Pi = \Pi'\times\Pi'':\RR^{m+n}\to\RR^{\bar{m}+\bar{n}}$ and $\Phi = \Phi'\times\Phi'':\RR^{\bar{m}+\bar{n}}\to\RR^{m+n}$.
Define
\[
\begin{array}{rcll}
\tld{p}(\bar{x},\bar{y})
    & =
    & p\circ\Phi(\bar{x},\bar{y})
    & \quad\text{on $\RR^{\bar{m}+\bar{n}}$,}
\vspace*{3pt}\\
\bar{f}(\bar{x})
    & =
    & \Pi''\circ f\circ\Phi'(\bar{x})
    & \quad\text{on $\bar{C}$,}
\vspace*{3pt}\\
\bar{F}(\bar{x})
    & =
    & (\bar{x},\bar{f}(\bar{x}))
    & \quad\text{on $\bar{C}$,}
\vspace*{3pt}\\
\tld{P}(\bar{x})
    & =
    & \tld{p}\circ\bar{F}(\bar{x})
    & \quad\text{on $\bar{C}$,}
\vspace*{3pt}\\
\bar{\varphi}(x_\lambda)
    & =
    & \Pi'\circ\varphi(x_\lambda)
    & \quad\text{on $\Pi_\lambda(C) = \Pi_{\bar{\lambda}}(\bar{C})$,}
\vspace*{3pt}\\
\bar{\sigma}
    & =
    & \sigma\circ\nu
    & \quad\text{on $\{1,\ldots,\bar{n}\}$,}
\vspace*{3pt}\\
\bar{\alpha}
    & =
    & \alpha\circ\nu
    & \quad\text{on $\{1,\ldots,\bar{n}\}$,}
\vspace*{3pt}\\
\bar{\xi}
    & =
    & \xi\circ\nu
    & \quad\text{on $\{1,\ldots,\bar{n}\}$.}
\end{array}
\]

It follows from these definitions that $\bar{\varphi}:\Pi_{\bar{\lambda}}(\bar{C})\to\bar{C}$ is a section of the projection $\Pi_{\bar{\lambda}}$, that $\im(\bar{\varphi}) = \{\bar{x}\in\bar{C} : \tld{P}(\bar{x}) = 0\}$, that $\bar{\varphi}$ satisfies the $(\bar{\sigma},\bar{\alpha},\bar{\xi})$-distinctness condition, and that \eqref{eq:distCondDiagram} is a commutative diagram of $\C$-analytic isomorphisms of germs of $\C$-analytic manifolds.

We are almost done.  The only problem is that since $\im(\bar{\varphi})$ is a $d$-dimensional submanifold of $\RR^{\bar{m}}$, and the system $\tld{P}(\bar{x}) = 0$ has $|E|-d$ many equations, we have too many equations to define $\bar{\varphi}$ in a nonsingular manner, according to the implicit function theorem.  This will be remedied by suitably choosing $|\bar{E}|-d$ of the $|E|-d$ equations.

Write $b = \varphi(a)$.  Using Remark \ref{rmk:equivRelations}.3, we can write
\[
\PD{}{P}{x_{\lambda'}}(b)
=
\left(
\PD{}{p}{x_i}\circ F(b) + \PD{}{p}{y_{\gamma(i)}}\circ F(b) \PD{}{f_{\gamma(i)}}{x_i}(b)\right)_{i\in\im(\lambda')},
\]
which is a nonsingular $(|E|-d)\times(|E|-d)$ matrix written as a row of column vectors.  Abbreviating $\sim_{(\sigma,\alpha,\xi,\varphi)}$ as $\sim$, it follows that
\[
\PD{}{\tld{P}}{x_{\bar{\lambda}'}}(b)
=
\left(
\sum_{j \sim i} \PD{}{p}{x_j}\circ F(b) + \PD{}{p}{y_{\gamma(j)}}\circ F(b) \PD{}{f_{\gamma(j)}}{x_j}(b)\right)_{i\in\im(\bar{\lambda}')},
\]
is an $(|E|-d)\times(|\bar{E}|-d)$ matrix of rank $|\bar{E}|-d$, and hence it contains a nonsingular $(|\bar{E}|-d)\times(|\bar{E}|-d)$ submatrix whose rows are indexed by $\im(\delta)$ for some increasing map $\delta:\{1,\ldots,|\bar{E}|-d\}\to\{1,\ldots,|E|-d\}$.  Put
\begin{eqnarray*}
\bar{p}(\bar{x},\bar{y})
    & = &
    \Pi_\delta\circ\tld{p}(\bar{x},\bar{y}),
\\
\bar{P}(\bar{x})
    & = &
    \Pi_\delta\circ\tld{P}(\bar{x}),
\end{eqnarray*}
and note that $(\bar{p}(\bar{x},\bar{y}),\bar{\sigma},\bar{\alpha},\bar{\xi},\name(\bar{C})) \in \Delta^{d}_{\bar{m},\bar{n}}(\rho)$ is a name for $\bar{P}$, $\det\PD{}{\bar{P}}{x_{\bar{\lambda}'}}\circ\bar{\varphi}(a) \neq 0$, and the germ $\bar{\varphi}_a:\RR^{d}_{a}\to\RR^{\bar{m}-d}_{\bar{\varphi}(a)}$ is implicitly define by $\bar{P}\circ\bar{\varphi}(x_\lambda) = 0$ on $\bar{C}_{\bar{\varphi}(a)}$, which gives \eqref{eq:detBarPu}.
\end{proof}

\begin{proof}[Proof of Proposition \ref{prop:distCond}]
Pick $a\in\QQ^d\cap\Pi_\lambda(C)$.  By Lemma \ref{lemma:distCond}, $\Zar(\im(F\circ\varphi_a))$ and $\Zar(\im(\bar{F}\circ\bar{\varphi}_a))$ are isomorphic varieties, which together with Lemma \ref{lemma:tdBounds} gives
\[
\td_{\QQ} \QQ(\im(F\circ\varphi_a))
=
\td_{\QQ} \QQ(\bar{F}\circ\bar{\varphi}_a))
\leq
\bar{n} + d.
\]
The proposition now follows from the observation that $\varphi$ has the $(\sigma,\alpha,\xi)$-distinctness condition if and only if $\bar{n} = n$, by Remark \ref{rmk:equivRelations}.1.
\end{proof}

\begin{lemma}\label{lemma:dimReduction}
Consider the setup given in \eqref{eq:IFsetup}.
If $\varphi$ satisfies the $(\sigma,\alpha,\xi)$-distinctness condition, then there exists a dense open subset $V$ of $\Pi_\lambda(C)$ such that for every $a\in V$, $\varphi(a)$ also satisfies the $(\sigma,\alpha,\xi)$-distinctness condition.
\end{lemma}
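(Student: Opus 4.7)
The plan is to use quasianalyticity of $\C$ together with the fact that the implicitly defined section $\varphi$ lies in $\C$ (because $\C$ is closed under extraction of implicitly defined functions), reducing the lemma to the statement that the zero set of a nonzero $\C$-analytic function on a connected open rational box has empty interior.

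First, I would enumerate the finite set $\Pi$ of unordered pairs $\{j_1,j_2\} \subseteq \{1,\ldots,n\}$ with $j_1\neq j_2$, $\sigma(j_1)=\sigma(j_2)$, and $\alpha(j_1)=\alpha(j_2)$. For each such pair I set
\[
h_{j_1,j_2}(x_\lambda) := \Pi_{\xi(j_1)}\circ\varphi(x_\lambda) - \Pi_{\xi(j_2)}\circ\varphi(x_\lambda)
\]
on $\Pi_\lambda(C)$. Since $\IF_\lambda(P;C)$ holds and $\C$ is an IF-system, the components of $\varphi$ all lie in $\C$, hence so does each $h_{j_1,j_2}$. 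By the hypothesis that $\varphi$ satisfies the $(\sigma,\alpha,\xi)$-distinctness condition, each $h_{j_1,j_2}$ is not identically zero on the connected open rational box $\Pi_\lambda(C)$.

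The key step is to observe that the zero set $Z_{j_1,j_2} := \{a \in \Pi_\lambda(C) : h_{j_1,j_2}(a)=0\}$ has empty interior. Indeed, suppose for contradiction that $h_{j_1,j_2}$ vanished on some nonempty open subset of the connected set $\Pi_\lambda(C)$; then all partial derivatives of $h_{j_1,j_2}$ would vanish at some point of that open subset, so the Taylor series of $h_{j_1,j_2}$ at that point would be zero, contradicting quasianalyticity of $\C$. Since $h_{j_1,j_2}$ is continuous, $Z_{j_1,j_2}$ is closed with empty interior, hence nowhere dense.

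I would then set
\[
V := \Pi_\lambda(C) \setminus \bigcup_{\{j_1,j_2\}\in\Pi} Z_{j_1,j_2},
\]
a finite intersection of dense open subsets of $\Pi_\lambda(C)$, hence itself dense and open. For any $a\in V$ and any pair $\{j_1,j_2\}\in\Pi$, we have $\Pi_{\xi(j_1)}\varphi(a)\neq \Pi_{\xi(j_2)}\varphi(a)$, which is precisely the (pointwise) $(\sigma,\alpha,\xi)$-distinctness condition at the point $\varphi(a)$. The only mildly subtle point is justifying that $\varphi\in\C$ so that quasianalyticity applies to $h_{j_1,j_2}$, but this is immediate from the IF-system hypothesis on $\C$ combined with $\IF_\lambda(P;C)$.
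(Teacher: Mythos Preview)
Your proposal is correct and follows essentially the same approach as the paper: both define $V$ as the finite intersection, over all pairs $j_1\neq j_2$ with $\sigma(j_1)=\sigma(j_2)$ and $\alpha(j_1)=\alpha(j_2)$, of the sets where $\Pi_{\xi(j_1)}\circ\varphi \neq \Pi_{\xi(j_2)}\circ\varphi$. The paper's proof is a one-liner that simply writes down this set and asserts it is as desired; you supply the justification the paper leaves implicit, namely that each $h_{j_1,j_2}$ lies in $\C$ (because $\C$ is an IF-system and $\IF_\lambda(P;C)$ holds), so quasianalyticity and connectedness of $\Pi_\lambda(C)$ force its zero set to be closed with empty interior. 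One very minor remark: $h_{j_1,j_2}$ is vector-valued, so strictly speaking you should observe that some \emph{component} of it is not identically zero and apply quasianalyticity to that component; the zero set of the full vector is then contained in the zero set of that component. Also, the phrasing ``contradicting quasianalyticity of $\C$'' is slightly off---what quasianalyticity gives you is that $h_{j_1,j_2}\equiv 0$ on all of $\Pi_\lambda(C)$, and it is \emph{this} that contradicts the hypothesis---but the intended argument is clear.
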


\begin{proof}
Let $E_1,\ldots,E_k$ be the $\approx_{(\sigma,\alpha)}$-equivalence classes on $\{1,\ldots,n\}$.  Then the set
\[
V = \bigcap_{i=1}^{k} \bigcap_{j_1,j_2\in E_i \atop j_1\neq j_2}
\left\{x_\lambda\in \Pi_\lambda(C) : \Pi_{\xi(j_1)}\circ\varphi(x_\lambda) \neq \Pi_{\xi(j_2)}\circ\varphi(x_\lambda)\right\}
\]
is as desired.
\end{proof}

\begin{definition}\label{def:generic}
We say that $\S$ is {\bf generic} if for every $m\in\NN_+$ and $n\in\NN$, every name
\[
(p(x,y),\sigma,\alpha,\xi,\name(D))\in\Delta_{m,n}^{0}(\rho)
\]
for an $\S$-polynomial map $P = p\circ F:D\to\RR^{\dim(D)}$, and every nonsingular zero $a$ of $P$, if $a$ satisfies the $(\sigma,\alpha,\xi)$-distinctness condition, then
\[
\td_\QQ \QQ(F(a)) = n.
\]
\end{definition}

\begin{lemma}\label{lemma:genSection}
The following are equivalent.
\begin{enumerate}{\setlength{\itemsep}{3pt}
\item
The family $\S$ is generic.

\item
Consider the setup given in \eqref{eq:IFsetup}.  If $\varphi$ satisfies the $(\sigma,\alpha,\xi)$-distinctness condition, then
\[
\td_\QQ \QQ(\im(F\circ\varphi)) = n + d.
\]
}\end{enumerate}
\end{lemma}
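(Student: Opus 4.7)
The plan is to prove the two implications separately. The direction $(2)\Rightarrow(1)$ is the easier one: it is essentially the specialization of (2) to $d=0$. Given a name $(p,\sigma,\alpha,\xi,\name(D))\in\Delta^{0}_{m,n}(\rho)$ for $P = p\circ F$ and a nonsingular zero $a$ of $P$ satisfying the $(\sigma,\alpha,\xi)$-distinctness condition, property (IF3) (cf.\ Remarks \ref{rmk:IFsection}.3) produces a rational box manifold $C\subseteq D$ containing $a$ with $\IF_\emptyset(P;C)$. The implicitly defined section $\varphi$ is then the constant map with value $a$, so distinctness for $\varphi$ reduces verbatim to pointwise distinctness at $a$; and $\im(F\circ\varphi)=\{F(a)\}$ gives $\QQ(\im(F\circ\varphi))=\QQ(F(a))$. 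Applying (2) with $d=0$ yields $\td_\QQ\QQ(F(a))=n$.

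For the main direction $(1)\Rightarrow(2)$, assume $\S$ is generic and consider the setup of \eqref{eq:IFsetup} with $\varphi$ satisfying the $(\sigma,\alpha,\xi)$-distinctness condition. Lemma \ref{lemma:tdBounds}.1 supplies the upper bound $\td_\QQ\QQ(\im(F\circ\varphi))\leq n+d$, so only the reverse inequality needs proof. Lemma \ref{lemma:tdBounds}.2 reduces that, in turn, to exhibiting a single point $a\in\QQ^d\cap\Pi_\lambda(C)$ with $\td_\QQ\QQ(F\circ\varphi(a))\geq n$. Lemma \ref{lemma:dimReduction} furnishes a dense open subset of $\Pi_\lambda(C)$ on which the distinctness condition for $\varphi$ specializes to pointwise distinctness; intersecting with the dense open $V$ from Lemma \ref{lemma:tdBounds}.2 and choosing a rational $a$ in this intersection, the point $b:=\varphi(a)$ itself satisfies the $(\sigma,\alpha,\xi)$-distinctness condition.

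The key step is to realize $b$ as a nonsingular zero of an $\S$-polynomial map indexed by $\Delta^{0}_{m,n}(\rho)$, so that genericity applies directly. Set $\bar{p}(x,y)=(p(x,y),\,x_\lambda-a)$; since $a\in\QQ^d$ the appended components are rational polynomials, the tuple $(\bar{p},\sigma,\alpha,\xi,\name(D))$ lies in $\Delta^{0}_{m,n}(\rho)$, the corresponding $\S$-polynomial map is $\bar{P}=(P,\,x_\lambda-a)$, and $\bar{P}(b)=0$ is immediate from $P(b)=0$ and $\Pi_\lambda(b)=a$. Nonsingularity of $b$ as a zero of $\bar{P}$ is checked inside $D=U\times\{u\}$ by examining the $|E|\times|E|$ Jacobian of $\bar{P}$ with respect to $x_E$ at $b$: using $E=\im(\lambda)\sqcup\im(\lambda')$, the top $|E|-d$ rows coming from $P$ have the nonsingular block $\PD{}{P}{x_{\lambda'}}(b)$ in the $\lambda'$-columns (supplied by $\IF_\lambda(P;C)$; see \eqref{eq:rankp}), while the bottom $d$ rows from $x_\lambda-a$ contribute the identity in the $\lambda$-columns and zero in the $\lambda'$-columns. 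The full matrix is block triangular with two nonsingular diagonal blocks, so $b$ is a nonsingular zero of $\bar{P}$. Genericity now gives $\td_\QQ\QQ(F(b))=n$, and feeding this back through Lemma \ref{lemma:tdBounds}.2 yields $\td_\QQ\QQ(\im(F\circ\varphi))\geq n+d$, completing the proof. The conceptual content of this direction is that rational specialization $x_\lambda=a$ collapses the $d$-parameter family of solutions down to a single nonsingular zero while leaving the distinctness data $(\sigma,\alpha,\xi)$ untouched, enabling the pointwise hypothesis of genericity to be invoked.
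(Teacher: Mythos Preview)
Your proof is correct and follows essentially the same approach as the paper: for $(2)\Rightarrow(1)$ both specialize to $d=0$, and for $(1)\Rightarrow(2)$ both combine Lemmas \ref{lemma:tdBounds} and \ref{lemma:dimReduction} to pick a rational $a$ where $\varphi(a)$ satisfies the distinctness condition, then apply genericity to the augmented map $x\mapsto(x_\lambda-a,P(x))$ in $\Delta^{0}_{m,n}(\S)$. Your write-up is in fact a bit more explicit than the paper's, particularly the block-triangular verification of nonsingularity and the $(2)\Rightarrow(1)$ direction.
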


\begin{proof}
Clearly 2 implies 1, since the definition of generic is statement 2 with $d = 0$.  Conversely, assume that $\S$ is generic, and consider the setup given in \eqref{eq:IFsetup}.  By Lemmas \ref{lemma:tdBounds} and \ref{lemma:dimReduction}, there exists a dense open subset $V$ of $\Pi_\lambda(C)$ such that for every $a\in\QQ^d\cap V$,
\[
\td_\QQ \QQ(\im(F\circ\varphi)) \geq \td_\QQ \QQ(F\circ\varphi(a)) + d,
\]
and $\varphi(a)$ satisfies the $(\sigma,\alpha,\xi)$-distinctness condition.  So fix $a\in \QQ^d\cap V$, and note that $\varphi(a)$ is a nonsingular zero in $D\times\RR^n$ of the map $x\mapsto (x_\lambda - a, P(x))$, which is in $\Delta_{n,m}^{0}(\S)$.  Thus $\td_\QQ \QQ(F\circ\varphi(a)) = n$, so $\td_\QQ \QQ(\im(F\circ\varphi)) = n + d$.
\end{proof}

\begin{remarks}\label{rmk:genTrans}
Suppose that $\S=\{S_\sigma\}_{\sigma\in\Sigma}$ is generic.
\begin{enumerate}{\setlength{\itemsep}{5pt}
\item
Each function $S_\sigma$ is transcendental:
\begin{quote}
This means that there does not exist a nonzero $q(x,y)\in\QQ[x,y]$ such that $q(x,S_\sigma(x)) = 0$.
\end{quote}

\begin{proof}
Fix $\sigma\in\Sigma$.  Write $\eta(\sigma) = m$ and $\rho(\sigma) = r$; we shall use variables $x = (x_1,\ldots,x_m)$, $y$ and $z$.  Define $P:[-r,r]\times\RR\to\RR$ by $P(x,y) = y - S_\sigma(x)$.  Then $P\in\Delta_{m+1,1}^{m}(\S)$, since $P(x,y) = p\circ F(x,y)$ with $F(x,y) = (x,y,S_\sigma(x))$ and $p(x,y,z) = y-z$.  Since $P(x,S_\sigma(x)) = 0$ and $\PD{}{P}{y} = 1$, Lemma \ref{lemma:genSection} gives
\begin{eqnarray*}
m+1
    & = &
    \td_{\QQ}\QQ(x,S_\sigma(x))
    \\
    & = &
    \td_\QQ \QQ(x) + \td_{\QQ(x)}\QQ(x,S_\sigma(x))
    \\
    & = &
    m + \td_{\QQ(x)}\QQ(x,S_\sigma(x)),
\end{eqnarray*}
so $\td_{\QQ(x)}\QQ(x,S_\sigma(x)) = 1$, which means that $S_\sigma$ is transcendental.
\end{proof}

\item
More generally, suppose that $\eta(\sigma) = m$ for all $\sigma\in\Sigma$.  Then $\S$ is differentially algebraically independent over $\QQ(x)$:
\begin{quote}
This means that for all lists of distinct pairs $(\sigma(1),\alpha(1)),\ldots,(\sigma(n),\alpha(n))$ in $\Sigma\times\NN^m$, there does not exist a nonzero $q(x,y)\in\QQ[x,y]$ such that $q\left(x, \PDn{\alpha(1)}{S_{\sigma(1)}}{x}(x), \ldots, \PDn{\alpha(n)}{S_{\sigma(n)}}{x}(x)\right) = 0$.
\end{quote}

\begin{proof}
Fix maps $\sigma:\{1,\ldots,n\}\to\Sigma$ and $\alpha:\{1,\ldots,n\}\to\NN^m$ such that
\[
(\sigma(1),\alpha(1)),\ldots,(\sigma(n),\alpha(n))
\]
are distinct pairs.  Write $x,y_2,\ldots,y_n$ for $m$-tuples of variables, write $z_1,\ldots,z_n$ for single variables, and write $y = (y_2,\ldots,y_n)$ and $z = (z_1,\ldots,z_n)$. Define $P:[-r,r]^n\times\RR^n \to \RR^{m(n-1)+n}$ by
\begin{eqnarray*}
P(x,y,z)
    & = &
    \left(y_2-x, \ldots, y_n-x, z_1 - \PDn{\alpha(1)}{S_{\sigma(1)}}{x}(x),\right.
    \\
    & &
    \left.
    %z_1 - \PDn{\alpha(1)}{S_{\sigma(1)}}{x}(x),
    \quad z_2 - \PDn{\alpha(2)}{S_{\sigma(2)}}{x}(y_2),
    \ldots,
    z_n - \PDn{\alpha(n)}{S_{\sigma(n)}}{x}(y_n)
    \right).
\end{eqnarray*}
Then $P\in\Delta_{mn+n,n}^{m}(\S)$.  Note that $P(x,\ldots,x,\PDn{\alpha(1)}{S_{\sigma(1)}}{x}(x), \ldots, \PDn{\alpha(n)}{S_{\sigma(n)}}{x}(x)) = 0$, and $\det\PD{}{P}{(y,z)} = 1$, so  Lemma \ref{lemma:genSection} gives
\[
m+n = \td_{\QQ} \QQ\left(x,\ldots,x,\PDn{\alpha(1)}{S_{\sigma(1)}}{x}(x), \ldots, \PDn{\alpha(n)}{S_{\sigma(n)}}{x}(x)\right),
\]
so
\[
n = \td_{\QQ(x)} \QQ\left(x,\PDn{\alpha(1)}{S_{\sigma(1)}}{x}(x), \ldots, \PDn{\alpha(n)}{S_{\sigma(n)}}{x}(x)\right).
\]
Thus $\PDn{\alpha(1)}{S_{\sigma(1)}}{x}, \ldots, \PDn{\alpha(n)}{S_{\sigma(n)}}{x}$ are algebraically independent over $\QQ(x)$.
\end{proof}
}\end{enumerate}
\end{remarks}

We are now ready to prove Theorem \ref{introThm:genDec}, restated below.

\begin{theorem}\label{thm:genDec}
If $\S$ is generic, then the theory of $\RR_{\S}$ is decidable if and only if the approximation oracle for $\S$ is decidable.
\end{theorem}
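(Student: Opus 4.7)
The ``only if'' direction is immediate from Lemma \ref{lemma:idealMemOracle}, so I focus on the ``if'' direction: assuming $\S$ is generic and the approximation oracle for $\S$ is decidable, I will show the ideal membership oracle for $\S$ is decidable, which suffices by Lemma \ref{lemma:idealMemOracle}. Fix input data as in Definition \ref{def:idealMemOracle} together with a test polynomial $q(x,y)\in \QQ[x,y]$; the task is to decide whether $q\in \II(\im(F\circ\varphi))$.

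First consider the special case in which $\varphi$ already satisfies the $(\sigma,\alpha,\xi)$-distinctness condition. By Lemma \ref{lemma:genSection} (the sole place where genericity is invoked) we have $\td_{\QQ} \QQ(\im(F\circ\varphi)) = n+d$, so Lemma \ref{lemma:tdBounds}(1) gives $\II(\im(F\circ\varphi)) = \II(X)$, where $X$ is the unique irreducible component of $\VV(p,x_{E^c}-u)$ containing $\im(F\circ\varphi)$. I can compute a set of generators for $\II(X)$ via the recipe sketched after Lemma \ref{lemma:tdBounds}: enumerate generators for the isolated primes of $\lb p(x,y),x_{E^c}-u\rb$ by the Becker--Weispfenning algorithm, and use evaluation at the computable point $F\circ\varphi(a)$ (for some $a\in \QQ^d\cap \Pi_\lambda(C)$, with $\varphi(a)$ computable by Proposition \ref{prop:contIFT}) to single out the prime equal to $\II(X)$. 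Once a generating set is in hand, membership of $q$ is decided by a Gr\"obner basis reduction.

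The general case is reduced to the special case by invoking Lemma \ref{lemma:distCond} with the correct equivalence relation $\approx_{(\sigma,\alpha,\xi,\varphi)}$. Since this relation cannot be read directly off the input, the plan is a time-shared search: enumerate all equivalence relations $\approx$ on $\{1,\ldots,n\}$ refining $\approx_{(\sigma,\alpha)}$ and, for each, run the construction of Lemma \ref{lemma:distCond} (cycling through the finitely many auxiliary choices such as the row-selector $\delta$ and the rational box $\bar{C}$) to produce candidate reduced data $\bar{P},\bar{C},\bar{\lambda}$. In parallel I semi-decide: (i) that $\IF_{\bar{\lambda}}(\bar{P};\bar{C})$ holds, using property IF4 for the computably $C^\infty$ map $\bar{P}$ obtained from the approximation oracle via Lemma \ref{lemma:compHolom}; and (ii) that for every pair $(j_1,j_2)$ in the reduced system with $(\bar{\sigma}(j_1),\bar{\alpha}(j_1)) = (\bar{\sigma}(j_2),\bar{\alpha}(j_2))$ there is a rational $a\in \Pi_\lambda(C)$ with $\Pi_{\bar{\xi}(j_1)}\circ\bar{\varphi}(a) \neq \Pi_{\bar{\xi}(j_2)}\circ\bar{\varphi}(a)$, using Proposition \ref{prop:contIFT} to approximate $\bar{\varphi}$. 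On joint success of (i) and (ii), the reduced $\bar{\varphi}$ satisfies the $(\bar{\sigma},\bar{\alpha},\bar{\xi})$-distinctness condition, so the special case applies to the reduced data and decides whether $q\circ\Phi$ lies in $\II(\bar{X})$; by the commutative diagram \eqref{eq:distCondDiagram} this is equivalent to $q \in \II(\im(F\circ\varphi))$.

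The main obstacle is arguing that the search returns an $\approx$ whose reduction genuinely corresponds to $\varphi$, not a spurious section. For $\approx = \approx_{(\sigma,\alpha,\xi,\varphi)}$ both (i) and (ii) succeed by Lemma \ref{lemma:distCond}. For $\approx$ strictly finer, (ii) never terminates, because the reduction leaves intact a pair with $\Pi_{\bar{\xi}(j_1)}\circ\bar{\varphi} \equiv \Pi_{\bar{\xi}(j_2)}\circ\bar{\varphi}$ and no rational witness to inequality exists. For $\approx$ strictly coarser, I add a third parallel semi-decision procedure that searches for a rational $a\in\Pi_\lambda(C)$ at which $\bar{P}(\Pi'(\varphi(a))) \neq 0$, which rules out the candidate: the existence of such $a$ is forced by the quasianalyticity of $\C$ together with the density of rationals, since strictly coarser $\approx$ imposes an identification of coordinates that fails on a nonempty open subset of $\Pi_\lambda(C)$, whence $\Phi'\circ\Pi'\circ\varphi \not\equiv \varphi$ and the resulting $\C$-analytic obstruction $\bar P \circ \Pi' \circ \varphi$ is not identically zero. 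Combining these three parallel semi-decisions with the special-case argument applied inside the reduction yields the required decision procedure for the ideal membership oracle, and hence for $\Th(\RR_{\S})$.
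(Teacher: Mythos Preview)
Your special case (when $\varphi$ already satisfies the distinctness condition) is fine and matches the paper. The gap is in the reduction to the special case.

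First, your case analysis is incomplete: the equivalence relations on $\{1,\ldots,n\}$ refining $\approx_{(\sigma,\alpha)}$ form a lattice, not a chain, so ``correct / strictly finer / strictly coarser'' does not exhaust the possibilities; you never analyze $\approx$ incomparable to $\approx_{(\sigma,\alpha,\xi,\varphi)}$.

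Second, and more seriously, you never establish that success of (i) and (ii) forces the computed ideal to equal $\II(\im(F\circ\varphi))$. For a strictly coarser (or incomparable) $\approx$, the reduced $\bar\varphi$ can perfectly well satisfy the reduced distinctness condition and $\IF_{\bar\lambda}(\bar P;\bar C)$ can hold for a suitable $\bar C$, so (i) and (ii) both terminate; but the commutative diagram \eqref{eq:distCondDiagram} is proved only for the true $\approx_{(\sigma,\alpha,\xi,\varphi)}$, so the equivalence ``$q\circ\Phi\in\II(\bar X)\iff q\in\II(\im(F\circ\varphi))$'' is unjustified. Your procedure (iii) is meant to reject such $\approx$, but you have not shown that $\bar P\circ\Pi'\circ\varphi$ is not identically zero in these cases (it does not follow merely from $\Phi'\circ\Pi'\circ\varphi\not\equiv\varphi$), and in any event your time-sharing is ill-specified: if (i)+(ii) return ``accept'' before (iii) returns ``reject'', you output the wrong ideal; if you insist on waiting for (iii), you wait forever on the correct $\approx$.

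The paper closes this gap by a self-verification step that you are missing. After computing the candidate ideal $I=\II(\Phi(X))$ from the reduced data, one checks whether the \emph{original} polynomials $p_1,\ldots,p_{\dim(D)-d}$ lie in $I$. The nontrivial claim, proved in the paper, is that this test passes \emph{if and only if} $\approx=\approx_{(\sigma,\alpha,\xi,\varphi)}$: if it passes, then $p$ vanishes on $\Phi(X)$, hence $P\circ\Phi'\circ\bar\varphi=0$, and since $\Phi'\circ\bar\varphi$ is a section of $\Pi_\lambda$ landing in $C$, uniqueness in $\IF_\lambda(P;C)$ forces $\Phi'\circ\bar\varphi=\varphi$, i.e.\ $\approx$ is correct. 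This single decidable check replaces your three interacting semi-decisions and makes the search over $\approx$ terminate with a certifiably correct answer.
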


\begin{proof}
The theory of $\RR_{\S}$ decides the approximation oracle for $\S$.  So assume that $\S$ is generic and has a decidable approximation oracle.  We must prove that the theory of $\RR_{\S}$ is decidable.  To do this, it suffices by Lemma \ref{lemma:idealMemOracle} to show that the ideal membership problem for $\S$ is decidable.

Consider the setup given in \eqref{eq:IFsetup}.  Our goal is to decide the membership problem for the ideal $\II(\im(F\circ\varphi))$.  Fix $a\in\QQ^d\cap\Pi_\lambda(C)$.  Since $\II(\im(F\circ\varphi)) = \II(\im(F\circ\varphi_a))$, it suffices to study the germ $\im(F\circ\varphi_a)$.  If we knew the equivalence relation $\approx_{(\sigma,\alpha,\xi,\varphi)}$ on $\{1,\ldots,n\}$, then we could solve the membership problem for the ideal $\II(\im(F\circ\varphi_a))$ as follows:  Use Lemma \ref{lemma:distCond} to construct $\bar{\varphi}$ which satisfies the $(\sigma,\alpha,\xi)$-distinctness condition.  Then construct a set of generators for $\II(\bar{X})$, where $\bar{X}$ is the unique irreducible component of $\VV(\bar{p}(\bar{x},\bar{y}), \bar{x}_{\bar{E}^c} - \bar{u})$ containing $a$ (and thus containing $\im(\bar{F}\circ\bar{\varphi}_a)$).  Since $\S$ is generic, $\td_{\QQ}\QQ(\im(\bar{F}\circ\bar{\varphi}_a)) = \bar{n} + d$ by Lemma \ref{lemma:genSection}, so $\II(\im(\bar{F}\circ\bar{\varphi}_a)) = \II(X)$.  Since $\Pi:\im(F\circ\varphi_a) \to \im(\bar{F}\circ\bar{\varphi}_a)$ and $\Phi: \im(\bar{F}\circ\bar{\varphi}_a) \to \im(F\circ\varphi_a)$ are inverse germs of polynomial maps, we have that
\[
\II(\im(F\circ\varphi_a))
=
\{q\in\QQ[x,y] = q\circ\Phi \in \II(\im(\bar{F}\circ\bar{\varphi}_a))\}
=
\{q\in\QQ[x,y] = q\circ\Phi \in \II(X)\},
\]
and we are done since we have a set of generators for the ideal $\II(X)$.

Unfortunately, we do not know the equivalence relation $\approx_{(\sigma,\alpha,\xi,\varphi)}$ a priori, since it depends on knowing various equality relations among the components of the function $\varphi$.  But, there are two things we do know about  $\approx_{(\sigma,\alpha,\xi,\varphi)}$ from the onset.  First, we know that $\approx_{(\sigma,\alpha,\xi,\varphi)}$ refines the equivalence relation $\approx_{(\sigma,\alpha)}$.  Second, we know by Remark \ref{rmk:equivRelations}.5 that $\lambda(1),\ldots,\lambda(d)$ are in distinct $\sim_{(\sigma,\alpha,\xi,\varphi)}$-equivalence classes.  We therefore let $\E$ be the set of all equivalence relations $\approx$ on $\{1,\ldots,n\}$ which refine $\approx_{(\sigma,\alpha)}$ and are such that if we write $\sim$ for the equivalence relation on $\{1,\ldots,m\}$ induced by $\approx$, then $\lambda(1),\ldots,\lambda(d)$ are in distinct $\sim$-equivalence classes.  We will search $\E$ to find $\approx_{(\sigma,\alpha,\xi,\varphi)}$.

For each equivalence relation $\approx$ in $\E$, with induced equivalence relation $\sim$, let $\bar{m}$ and $\bar{n}$ be the number of equivalence classes of $\sim$ and $\approx$, respectively, and fix increasing maps
$\mu:\{1,\ldots,\bar{m}\}\to\{1,\ldots,m\}$ and $\nu:\{1,\ldots,\bar{n}\}\to\{1,\ldots,n\}$ whose images are sets of representatives for the equivalence relations $\sim$ and $\approx$, respectively, and such that
$\im(\lambda)\subseteq\im(\mu)$.  As we are describing for $\sim$ and $\approx$ what was already done for
$\sim_{(\sigma,\alpha,\xi,\varphi)}$ and $\approx_{(\sigma,\alpha,\xi,\varphi)}$ in the proof of Lemma \ref{lemma:distCond}, let us agree to use the following list of notation from the proof of the lemma without redefinition, but now apply it to $\sim$ and $\approx$ instead of $\sim_{(\sigma,\alpha,\xi,\varphi)}$ and $\approx_{(\sigma,\alpha,\xi,\varphi)}$:
\begin{itemize}
\item
$\Pi'(x) = \bar{x}$, $\Pi''(y) = \bar{y}$, and $\Pi(x,y) = (\bar{x},\bar{y})$,

\item
$\Phi'$, $\Phi''$, and $\Phi = \Phi'\times\Phi''$

\item
$\bar{\varphi} = \Pi'\circ\varphi:\Pi_\lambda(C)\to\bar{C}$, with $\bar{C} = \Pi'(C)$,

\item
$\bar{E} = \mu^{-1}(E)$ and $\bar{E}^c = \{1,\ldots,\bar{m}\}\setminus\bar{E}$,

\item
$\bar{D} = \Pi'(D)$, and write $\bar{D} = \bar{U}\times\bar{u}$ for an open rational box $\bar{U}\subseteq\RR^{\bar{E}}$ and $\bar{u}\in\QQ^{\bar{E}^c}$,

\item
$\bar{\lambda}:\{1,\ldots,d\}\to\{1,\ldots,\bar{m}\}$ and $\bar{\lambda}':\{1,\ldots,\dim(\bar{D})-d\}\to\{1,\ldots,\bar{m}\}$,

\item
$\bar{\sigma}$, $\bar{\alpha}$, and $\bar{\xi}$,

\item
$\bar{F}(\bar{x}) = (\bar{x},\bar{f}(\bar{x})) = \Pi\circ F\circ \Phi'(\bar{x})$,

\item
$\tld{p}(\bar{x},\bar{y})$ and $\tld{P}(\bar{x})$.
\end{itemize}
Note that with this notation, the equivalence relations $\approx$ and $\approx_{(\sigma,\alpha,\xi,\varphi)}$ are the same if and only if $\varphi = \Phi'\circ\bar{\varphi}$.

Note that $\Pi_{E^c}\circ\varphi(x_\lambda) = u$ and $\Pi_{\bar{E}^c}\circ\bar{\varphi}(x_\lambda) = \bar{u}$ for all $x_\lambda\in\Pi_\lambda(C)$.  (Also, it follows from the definition of $\sim$ that $|E^c| = |\bar{E}^c|$ and that $\bar{u}$ is obtained from $u$ by permuting its coordinates.)

The following two subroutines will be used to search for $\approx_{(\sigma,\alpha,\xi,\varphi)}$ among all the members of $\E$.\\

\noindent{\bf Subroutine 1.}
{\slshape Use $C^\infty$ approximation algorithms for $\varphi$ and $\Phi'\circ\bar{\varphi}$ to try to verify that $\varphi\neq\Phi'\circ\bar{\varphi}$.  This is done searching for a compact rational box $B\subseteq\Pi_\lambda(C)$ and disjoint, open, rational boxes $A, A' \subseteq \RR^{m+n}$ such that $\varphi(B) \subseteq A$ and $\Phi'\circ\bar{\varphi}(B) \subseteq A'$.}\\

Note that Subroutine 1 terminates if and only if $\approx$ does not equal $\approx_{(\sigma,\alpha,\xi,\varphi)}$.\\

\noindent{\bf Subroutine 2.}

\begin{description}
\item[Step 1]
{\slshape Use an approximation algorithm for $\Phi'\circ\bar{\varphi}(a)$ to try to verify that $\Phi'\circ\bar{\varphi}(a)\in C$.  (Note: Since $\Pi_{E^c}\circ\Phi'\circ\bar{\varphi}(a) = u$, this is equivalent to saying that $\Pi_E\circ\Phi'\circ\bar{\varphi}(a)$ is in $\Pi_E(C)$, which is an open rational box.)}
\end{description}
If $\Phi'\circ\bar{\varphi}(a)\notin C$, Step 1 does not terminate.  But, this can only occur when $\approx$ does not equal $\approx_{(\sigma,\alpha,\xi,\varphi)}$.

\begin{description}
\item[Step 2]
{\slshape Use an approximation algorithm for $\PD{}{\tld{P}}{\bar{x}}$ to search for an increasing map $\delta:\{1,\ldots,\dim(\bar{D}) - d\}\to\{1,\ldots,\dim(D)-d\}$ such that if we put $\bar{p} = \Pi_\delta\circ\tld{p}$ and $\bar{P} = \bar{p}\circ \bar{F}$, then $\det\PD{}{\bar{P}}{x_{\bar{\lambda}'}}\circ\bar{\varphi}(a) \neq 0$.}
\end{description}
If there is no such map $\delta$, Step 2 does not terminate.  But again, this can only occur when $\approx$ does not equal $\approx_{(\sigma,\alpha,\xi,\varphi)}$.

\begin{description}
\item[Step 3]
{\slshape Find sets of generators for the isolated primes of the ideal $\lb \bar{p}(\bar{x},\bar{y}), \bar{x}_{\bar{E}^c} - \bar{u}\rb$, as described in the discussion following Lemma \ref{lemma:idealMemOracle}.}
\end{description}
Call these primes $\mathfrak{p}_1,\ldots,\mathfrak{p}_k$.

\begin{description}
\item[Step 4]
{\slshape Search for all $j\in\{1,\ldots,k\}$ such that $\bar{F}\circ\bar{\varphi}(a)\notin\VV(\mathfrak{p}_j)$.  Continue searching until $(k-1)$-many such $j$'s have been found.  Let $i$ be the sole remaining member of $\{1,\ldots,k\}$, and let $X = \VV(\mathfrak{p}_i)$.}
\end{description}

If there is more than one $j$ such that $\bar{F}\circ\bar{\varphi}(a)\in\VV(\mathfrak{p}_j)$, Step 4 does not terminate.  But again, this can only occur when $\approx$ does not equal $\approx_{(\sigma,\alpha,\xi,\varphi)}$.

\begin{description}
\item[Step 5]
{\slshape Let
\[
I = \{q(x,y)\in\QQ[x,y] : q\circ\Phi(\bar{x},\bar{y})\in\II(X)\}.
\]
Check if $p_1(x,y),\ldots,p_{\dim(D)-d}(x,y)\in I$, where $p = (p_1,\ldots,p_{\dim(D)-d})$.  If this is the case, return the ideal $I$ and stop.  If this is not the case, just stop without returning anything.}
\end{description}
``Returning the ideal $I$'' means that we are returning the membership algorithm for $I$ which is induced from the membership algorithm for $\II(X)$ in the obvious way: to determine if a polynomial $q(x,y)\in\QQ[x,y]$ is in $I$, we simply test if $q\circ\Phi(\bar{x},\bar{y})\in\II(X)$, which can be done since we have a set of generators for $\II(X)$.

We claim that Step 5 returns the ideal $I$ if and only if $\approx$ equals $\approx_{(\sigma,\alpha,\xi,\varphi)}$, and in this case $I = \II(\im(F\circ\varphi_a))$.  To see this, first note that $q(x,y)\in\QQ[x,y]$ vanishes on $\Phi(X)$ if and only if $q\circ\Phi(\bar{x},\bar{y})$ vanishes on $X$, so
\begin{equation}\label{eq:ID}
I = \II(\Phi(X)).
\end{equation}

Now, assume that $\approx$ equals $\approx_{(\sigma,\alpha,\xi,\varphi)}$.  Then $\bar{\varphi}$ satisfies the $(\bar{\sigma},\bar{\alpha},\bar{\xi})$-distinctness condition and is implicitly defined on $\bar{D}$ by the equation $\bar{P}\circ\bar{\varphi}(x_\lambda) = 0$.  Because $\S$ is generic, we have $\II(\im(\bar{F}\circ\bar{\varphi}_a)) = \II(X)$, and therefore
\begin{equation}\label{eq:Zar}
\II(\im(\Phi\circ \bar{F}\circ\bar{\varphi}_a)) = \II(\Phi(X)).
\end{equation}
Also, because $\approx$ equals $\approx_{(\sigma,\alpha,\xi,\varphi)}$,
\begin{equation}\label{eq:Fvarphi}
\Phi\circ \bar{F}\circ\bar{\varphi} = F\circ\varphi.
\end{equation}
It follows from \eqref{eq:ID}, \eqref{eq:Zar} and \eqref{eq:Fvarphi} that $I = \II(\im(F\circ\varphi_a))$.  Clearly, $p_1,\ldots,p_{\dim(D)-d}\in\II(\im(F\circ\varphi_a))$.  Therefore Step 5 returns the ideal $\II(\im(F\circ\varphi_a))$.

Conversely, assume that $p_1,\ldots,p_{\dim(D)-d}\in I$.  We need to show that $\approx$ equals $\approx_{(\sigma,\alpha,\xi,\varphi)}$.  Put $b = \bar{F}\circ\bar{\varphi}(a)$.  Then $\im(\bar{F}\circ\bar{\varphi}_a) \subseteq X_b$, so
\begin{equation}\label{eq:1}
\im(\Phi \circ \bar{F}\circ \bar{\varphi}_a) \subseteq \Phi(X_b).
\end{equation}
Because $\approx$ refines the equivalence relation $\approx_{(\sigma,\alpha)}$, it follows from the definitions of the functions involved that $\Phi''\circ \bar{f} = f\circ\Phi'$.  Therefore
\begin{eqnarray}\label{eq:2}
\Phi \circ \bar{F}(\bar{x})
    & = &
    (\Phi'(\bar{x}),\Phi''\circ \bar{f}(\bar{x})),
    \\
    & = &
    (\Phi'(\bar{x}), f\circ\Phi'(\bar{x})),\nonumber\\
    & = &
    F\circ\Phi'(\bar{x}).\nonumber
\end{eqnarray}
Equations \eqref{eq:1} and \eqref{eq:2} show that
\begin{equation}\label{eq:3}
\im(F\circ\Phi'\circ\bar{\varphi}_a) \subseteq \Phi(X_b).
\end{equation}
Using \eqref{eq:ID} along with our assumption that $p_1,\ldots,p_{\dim(D)-d}\in I$ implies that $p$ vanishes on $\Phi(X)$.  Therefore applying $p$ to both sides of \eqref{eq:3} shows that $P\circ\Phi'\circ\bar{\varphi}_a = 0$.  Now, $\Phi'\circ\bar{\varphi}$ is a section of the projection $\Pi_\lambda$ and $\Phi'\circ\bar{\varphi}(a)\in C$.   But $\varphi:\Pi_\lambda(C)\to C$ is the unique section of $\Pi_\lambda:C\to \Pi_\lambda(C)$ implicitly defined by $P$ on $C$, so necessarily $\Phi'\circ\bar{\varphi} = \varphi$.  Therefore $\approx$ equals $\approx_{(\sigma,\alpha,\xi,\varphi)}$.

We are now ready to give the algorithm which solves the membership problem for the ideal $\II(\im(F\circ\varphi_a))$.\\

\noindent{\bf The Algorithm}.
{\slshape For each equivalence relation $\approx$ in $\E$, use time sharing to run Subroutines 1 and 2 on $\approx$ simultaneously.  One of the subroutines will stop first, since at least one of them must stop.  If Subroutine 1 stops first, then $\approx$ does not equal $\approx_{(\sigma,\alpha,\xi,\varphi)}$, so move on to the next equivalence relation in $\E$.  Likewise, if Subroutine 2 stops first but does not return an ideal $I$, then $\approx$ does not equal $\approx_{(\sigma,\alpha,\xi,\varphi)}$, so move on to the next equivalence relation in $\E$.  If Subroutine 2 stops first and returns the ideal $I$, then $\approx$ equals $\approx_{(\sigma,\alpha,\xi,\varphi)}$ and $I = \II(\im(F\circ\varphi_a))$, so return the ideal $I$ and terminate the program.\\
}

This algorithm eventually finds $\approx_{(\sigma,\alpha,\xi,\varphi)}$ and returns $\II(\im(F\circ\varphi_a))$ since $\E$ is finite.
\end{proof}

\section{Constructing generic, computably $C^\infty$ families of functions}
\label{s:genDense}

In this section we prove Theorem \ref{introThm:genDense}.  We begin with a remark which verifies that
\begin{equation}\label{eq:topBase}
\{\Ball_{\C}(\S,\epsilon) : \text{$\S\in\Comp_{\C}(\rho)$ and $\epsilon:\Delta(\rho)\to\QQ_+$ is computable}\}
\end{equation}
is indeed a valid base for a topology on $\Comp_{\S}(\rho)$.

\begin{remark}\label{rmk:topBase}
Let $\S\in\Comp_{\C}(\rho)$, $\epsilon:\Delta(\rho)\to\QQ_+$ be computable, and $\T\in\Ball(\S,\epsilon)$.  Then there exists a computable map $\delta:\Delta(\rho)\to\QQ_+$ such that $\Ball(\T,\delta) \subseteq \Ball(\S,\epsilon)$.
\end{remark}

\begin{proof}
For each $(\sigma,\alpha)\in\Delta(\S)$, we have
\[
M = \max \left\{\left|
\PDn{\alpha}{S_\sigma}{x}(x) - \PDn{\alpha}{T_\sigma}{x}(x)
\right| : x\in[-\rho(\sigma),\rho(\sigma)]\right\}
< \epsilon(\sigma,\alpha),
\]
so by Lemma \ref{lemma:compEVT} we can effectively find a positive rational number $\delta(\sigma,\alpha) < \epsilon(\sigma,\alpha) - M$.  Note that $\Ball(\T,\delta) \subseteq\Ball(\S,\epsilon)$.
\end{proof}

The following lemma is a reformulation of a classical theorem of Severi \cite{Severi} (see also Lorentz \cite[Chapter 12]{Lorentz}).  It is fundamental to our proof of Theorem \ref{introThm:genDense}.

\begin{lemma}\label{lemma:interpolationPoly}
Let $n$ and $m$ be positive integers, and for each
$i\in\{1,\ldots,n\}$ let $I_i$ be a finite subset of $\NN^n$.
Consider an $n$-tuple of variables $x = (x_1,\ldots,x_n)$ and an
$mn$-tuple of variables $y = (y_1,\ldots,y_m)$, where $y_i =
(y_{i,1},\ldots,y_{i,n})$ for each $i\in\{1,\ldots,m\}$.  We can
effectively construct families of polynomials $\{N_{i,\alpha}(x,y)\}_{i\in\{1,\ldots,m\}, \alpha\in I_i} \subseteq \QQ[x,y]$ and $\{D_{i,\alpha}(y)\}_{i\in\{1,\ldots,m\}, \alpha\in I_i} \subseteq \QQ[x]$ such that if we put
\[
p_{i,\alpha}(x,y) = \frac{N_{i,\alpha}(x,y)}{D_{i,\alpha}(y)}
\quad\text{for each $i\in\{1,\ldots,m\}$ and $\alpha\in I_i$,}
\]
then the following hold for all distinct points $a_1,\ldots,a_m$ in $\RR^n$:
\begin{enumerate}{\setlength{\itemsep}{3pt}
\item
Write $a = (a_1,\ldots,a_m)$.  Then $D_{i,\alpha}(a)\neq 0$ for all $i\in\{1,\ldots,m\}$ and $\alpha\in I_i$.

\item
For all $i,j\in\{1,\ldots,m\}$, $\alpha\in I_i$, and $\beta\in I_j$,
\[
\PDn{\beta}{p_{i,\alpha}}{x}(a_j,a) = \left\{\begin{array}{ll}
1, & \text{if $(j,\beta) = (i,\alpha)$},\\
0, & \text{if $(j,\beta) \neq (i,\alpha)$}.
\end{array}\right.
\]
}\end{enumerate}
\end{lemma}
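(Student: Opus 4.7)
The plan is to treat this as a standard multivariate Hermite interpolation problem: for each pair $(i,\alpha)$ I want to construct a rational function $p_{i,\alpha}(x,y)$ that, when evaluated together with all required $x$-derivatives at the nodes $y_1,\ldots,y_m$, reproduces the Kronecker-delta pattern in~(2). I will build $p_{i,\alpha}$ as a product of two pieces: a \emph{vanishing factor} $W_i(x,y)$ that kills every required derivative at all nodes except $y_i$, and a \emph{correction} $c_{i,\alpha}(x,y)$, polynomial in $x$ of low degree and rational in $y$, that pins down the correct Taylor expansion at $y_i$. Setting $N_{i,\alpha}/D_{i,\alpha}$ to be $W_i\cdot c_{i,\alpha}$ after clearing denominators then yields the stated form.

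For the vanishing factor I would set $k_j = \max\{|\beta|:\beta\in I_j\}$ and define
\[
V_j(x,y) \;=\; \Bigl(\sum_{l=1}^n (x_l - y_{j,l})^2\Bigr)^{k_j+1}, \qquad W_i(x,y) \;=\; \prod_{j\neq i} V_j(x,y).
\]
The use of the squared Euclidean norm here is the key design decision, and in my view the main subtle point of the proof: $V_j$ vanishes to order $2(k_j+1)$ at $x=y_j$, so by Leibniz every $x$-derivative of order $\le k_j$ of any product $W_i\cdot g$ vanishes at $y_j$; yet $V_j(y_i,y) = \|y_i - y_j\|^{2(k_j+1)}$ is nonzero whenever $y_i\neq y_j$, even when they happen to agree in some coordinates (which the naive product $\prod_l (x_l-y_{j,l})^{k_j+1}$ would \emph{not} guarantee). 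Consequently $W_i(y_i,y)\neq 0$ whenever $y_1,\ldots,y_m$ are distinct, which gives property~(1) once $D_{i,\alpha}$ is taken to be a power of $W_i(y_i,y)$.

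To fix the values at $y_i$, I would expand $W_i(x,y) = \sum_\gamma w_\gamma(y)(x - y_i)^\gamma$ with $w_\gamma\in\QQ[y]$ and $w_0(y) = W_i(y_i,y)$, and look for $c_{i,\alpha}(x,y) = \sum_{|\beta|\le k_i} c_\beta(y)(x-y_i)^\beta$ such that $W_i\cdot c_{i,\alpha}$ equals $(x-y_i)^\alpha/\alpha!$ modulo $(x-y_i)^{k_i+1}$. Equating coefficients of $(x-y_i)^\delta$ for $|\delta|\le k_i$ and ordering multi-indices by total degree yields a triangular linear system in the $c_\beta(y)$ with diagonal entries $w_0(y)$, uniquely solvable over $\QQ(y)$ by back-substitution; each $c_\beta(y)$ emerges as a polynomial in $y$ divided by a bounded power of $w_0(y)$. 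Clearing denominators in $W_i\cdot c_{i,\alpha}$ then gives $N_{i,\alpha}\in\QQ[x,y]$ and $D_{i,\alpha}(y)=w_0(y)^{k_i+1}\in\QQ[y]$. Properties (1) and (2) are immediate from the construction, and every step is symbolic polynomial arithmetic in the input data $\{I_i\}$, hence effective.
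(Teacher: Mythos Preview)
Your approach is correct and follows the same high-level strategy as the paper---build a vanishing factor that kills all prescribed derivatives at the nodes $y_j$ with $j\neq i$, then correct the Taylor jet at $y_i$---but the two implementations differ in both pieces. For the vanishing factor, the paper uses products of affine hyperplane functions
\[
L_{i,j}(x,y)=(x-y_j)\cdot(y_j-y_i)=\sum_{k=1}^n (x_k-y_{j,k})(y_{j,k}-y_{i,k})
\]
raised to the power $h_j+1$; each $L_{i,j}$ is linear in $x$, vanishes at $x=y_j$, and satisfies $L_{i,j}(y_i,y)=|y_i-y_j|^2\neq 0$ for distinct nodes. You instead take the squared Euclidean distance $\|x-y_j\|^{2(k_j+1)}$, which is quadratic in $x$ but achieves the same nonvanishing at $y_i$ for the same reason. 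For the correction at $y_i$, the paper includes the monomial factor $(x-y_i)^\alpha$ in its base function $q_{i,\alpha}$, so that only derivatives $\partial^\beta$ with $\beta\ge\alpha$ (componentwise) can survive at $y_i$; it then kills the unwanted ones with $\beta>\alpha$ by a downward recursion on the product partial order of $\NN^n$, subtracting off the already-constructed $p_{i,\beta}$. You instead solve directly for the full degree-$\le k_i$ Taylor polynomial of the correction factor via a block-lower-triangular linear system with diagonal entries $w_0(y)=W_i(y_i,y)$. The paper's route gives lower-degree interpolants and a slicker recursion tied to the structure of $I_i$; yours is more uniform and makes the denominator bound $D_{i,\alpha}=w_0(y)^{k_i+1}$ explicit up front. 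Both are effective and both hinge on the same point you correctly flagged: the vanishing factor must be nonzero at $y_i$ whenever the nodes are merely \emph{distinct}, not coordinatewise distinct, which is why a sum of squares (yours) or an inner product against $y_j-y_i$ (the paper's) is needed rather than a naive product $\prod_l(x_l-y_{j,l})$.
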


\begin{proof}
Throughout the proof, $a = (a_1,\ldots,a_m)$ denotes an arbitrary
$m$-tuple of distinct points $a_1,\ldots,a_m$ in $\RR^n$, and $\leq$
denotes the partial ordering of $\NN^n$ given by
\[
\alpha\leq\beta\quad\text{if and only if}\quad \alpha_1\leq\beta_1,\ldots,\alpha_n\leq\beta_n
\]
for all $\alpha = (\alpha_1,\ldots,\alpha_n)$ and $\beta = (\beta_1,\ldots,\beta_n)$ in $\NN^n$.  For each $j\in\{1,\ldots,m\}$, let
\[
h_j = \max\{|\beta| : \beta\in I_j\},
\]
and for each $(i,j)\in\{1,\ldots,m\}^2$ with $i\neq j$, let
\[
L_{i,j}(x,y) = (x - y_j)\cdot(y_j - y_i) = \sum_{k=1}^{n}(x_k - y_{j,k})(y_{j,k} - y_{i,k}).
\]
Thus the zero set of $L_{i,j}(x,a)$ is the $(n-1)$-dimensional
affine subspace of $\RR^n$ through $a_j$ normal to the vector $a_j -
a_i$.  In particular, $L_{i,j}(a_j,a) = 0$ and $L_{i,j}(a_i,a)\neq 0$.

For each $i\in\{1,\ldots,m\}$ and $\alpha\in I_i$, define
\[
q_{i,\alpha}(x,y) = (x - y_i)^\alpha
\left(
\prod_{j\in\{1,\ldots,m\}\setminus\{i\}}
L_{i,j}(x,y)^{h_j + 1}
\right).
\]
Note that
\begin{equation}\label{eq:iNotZero}
\PDn{\alpha}{q_{i,\alpha}}{x}(a_i,a) \neq 0,
\end{equation}
that  for every $\beta\in\NN^n$ for which $\beta_k < \alpha_k$ for some $k\in\{1,\ldots,n\}$,
\begin{equation}\label{eq:iZero}
\PDn{\beta}{q_{i,\alpha}}{x}(a_i,a) = 0,
\end{equation}
and that for all $j\in\{1,\ldots,m\}\setminus\{i\}$ and all $\beta\in I_j$,
\begin{equation}\label{eq:jZero}
\PDn{\beta}{q_{i,\alpha}}{x}(a_j,a) = 0.
\end{equation}
Fix $i\in\{1,\ldots,m\}$.  We construct the $p_{i,\alpha}$'s from the $q_{i,\alpha}$'s by induction.

Consider $\alpha\in I_i$, define $I_i(\alpha) = \{\beta\in I_i :
\beta\geq\alpha, \beta\neq\alpha\}$, and inductively assume that
$p_{i,\beta}$ has been defined for every $\beta\in I_i(\alpha)$,
with the understanding that the base case of the induction is when
$I_i(\alpha)$ is empty.  Define
\begin{equation}\label{eq:p}
p_{i,\alpha}(x,y) = \frac{q_{i,\alpha}(x,y) - \sum_{\beta\in I_i(\alpha)}
\PDn{\beta}{q_{i,\alpha}}{x}(y_i,y)
p_{i,\beta}(x,y)}
{\PDn{\alpha}{q_{i,\alpha}}{x}(y_i,y)}.
\end{equation}
It follows from \eqref{eq:iNotZero}-\eqref{eq:p} that $p_{i,\alpha}$ has the desired properties.
\end{proof}

Write $\id_n$ for the $n\times n$ identity matrix, and for a
matrix $A = (a_{i,j})_{i,j}$, write $\|A\| =
\max_{i,j}|a_{i,j}|$.

\begin{lemma}\label{lemma:pertubationPoly}
Let $a_1,\ldots,a_m$ be (not necessarily distinct) computable points
in $\RR^n$, and let $\alpha:\{1,\ldots,m\}\to\NN^n$ be such
that for all distinct $i,j\in\{1,\ldots,m\}$,
\begin{equation}\label{eq:distinct}
\text{if $\alpha(i) = \alpha(j)$, then $a_i\neq a_j$.}
\end{equation}
Then given any approximation algorithms for the points
$a_1,\ldots,a_m$ and any positive rational number $\epsilon$, we can
effectively find polynomials $p_1(x),\ldots,p_m(x)\in\QQ[x]$ such
that
\begin{equation}\label{eq:detNotZero}
\left\|\left(\PDn{\alpha(i)}{p_j}{x}(a_i)\right)_{(i,j)\in\{1,\ldots,m\}^2} - \id_m\right\| < \epsilon.
\end{equation}
\end{lemma}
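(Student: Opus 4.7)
The plan is to deduce the lemma from Lemma \ref{lemma:interpolationPoly} by combining a search over candidate partitions of $\{1,\ldots,m\}$ with an effective semi-verification that uses the easy decidability of strict rational inequalities. The central obstruction is that the partition of $\{1,\ldots,m\}$ determined by which of the $a_i$'s coincide is not computable from the given approximation algorithms; nevertheless, hypothesis \eqref{eq:distinct} guarantees that $\alpha$ is injective on each class of that partition, so this partition is at least well-defined and its blocks carry the interpolation data required by Lemma \ref{lemma:interpolationPoly}.

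To describe the ideal (non-effective) construction, let $G_1,\ldots,G_k$ be the partition of $\{1,\ldots,m\}$ defined by placing $i,j$ in the same block if and only if $a_i = a_j$; write $b_l$ for the common value of the $a_i$'s with $i \in G_l$, and set $I_l := \{\alpha(i) : i \in G_l\}$, which by \eqref{eq:distinct} has size $|G_l|$. Lemma \ref{lemma:interpolationPoly} applied to the distinct points $b_1,\ldots,b_k$ with these index sets produces rational functions $p_{l,\beta}(x,y) = N_{l,\beta}(x,y)/D_{l,\beta}(y)$. Writing $l(i)$ for the block containing $i$ and substituting $y_l = b_l$, the polynomials $p_i^{\ast}(x) := p_{l(i),\alpha(i)}(x,b)$ have computable real coefficients and, by Lemma \ref{lemma:interpolationPoly}.2, satisfy $\PDn{\alpha(i)}{p_j^{\ast}}{x}(a_i) = \delta_{ij}$ \emph{exactly}.

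To produce rational polynomials, I substitute pairwise distinct rationals $\tilde{b}_l \in \QQ^n$ close to $b_l$ and set $p_i(x) := p_{l(i),\alpha(i)}(x,\tilde{b}) \in \QQ[x]$. Since $D_{l,\beta}$ depends only on $y$ and is nonvanishing at tuples of distinct components, the coefficients of $p_i$ as a polynomial in $x$ depend continuously (in fact rationally) on $\tilde{b}$ in a neighborhood of $b$, so each entry $\PDn{\alpha(i)}{p_j}{x}(a_i)$ converges to $\delta_{ij}$ as $\tilde{b} \to b$; thus \eqref{eq:detNotZero} holds strictly for $\tilde{b}$ sufficiently close to $b$.

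The algorithm now proceeds by time-sharing. It enumerates all pairs $(\Pi,\tilde{b})$, where $\Pi$ is a partition of $\{1,\ldots,m\}$ on whose blocks $\alpha$ is injective and $\tilde{b}$ is a tuple of pairwise distinct rationals, one per block of $\Pi$; for each such pair it constructs candidate polynomials $p_1,\ldots,p_m$ via Lemma \ref{lemma:interpolationPoly} as above and attempts to semi-verify \eqref{eq:detNotZero}. This verification is effective because each entry $\PDn{\alpha(i)}{p_j}{x}(a_i)$ is a rational polynomial evaluated at a computable point, hence a computable real, so its proximity to $\delta_{ij}$ within $\epsilon$ can be semi-decided using the given approximation algorithms for the $a_i$. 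The algorithm outputs the first pair for which verification succeeds; termination follows from the previous paragraph applied to the true partition $G_1,\ldots,G_k$ with any sufficiently close $\tilde{b}$.
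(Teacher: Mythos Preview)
Your proof is correct and rests on the same key idea as the paper's: apply Lemma \ref{lemma:interpolationPoly} to the partition of $\{1,\ldots,m\}$ into equality classes of the $a_i$'s, with rational approximations to the distinct values, and then semi-verify the strict inequality \eqref{eq:detNotZero} using the approximation algorithms for the $a_i$'s. The difference is purely organizational. You run a blind time-shared search over all pairs (partition, rational tuple) with $\alpha$ injective on each block, relying on the verification step to filter out wrong guesses. The paper instead builds a directed sequence: it takes shrinking rational boxes $B_{i,k}\ni a_i$, groups indices by the connected components of $\bigcup_i B_{i,k}$, first waits until all pairs with equal $\alpha$-value have separated (possible and verifiable by \eqref{eq:distinct}), and then picks rational points $\chi(i,k)$ in each component; as $k\to\infty$ the component partition stabilizes to the true equality partition and the resulting matrix converges to $\id_m$, so one just checks \eqref{eq:detNotZero} along this single sequence. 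Your search is conceptually simpler and avoids the box bookkeeping; the paper's version has the minor advantage of producing an explicitly convergent sequence of candidates rather than an unstructured enumeration. Both arguments exploit the same point you identified: the verification of \eqref{eq:detNotZero} is a conjunction of strict inequalities between computable reals, hence semi-decidable, so it does not matter that the correct partition is not decidable in advance.
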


\begin{proof}
For each $i\in\{1,\ldots,m\}$, use the approximation algorithm for
$a_i$ to construct a computable decreasing sequence of compact
rational boxes
\begin{equation}\label{eq:boxes}
B_{i,0} \supseteq B_{i,1} \supseteq B_{i,2} \supseteq \cdots
\end{equation}
such that $\bigcap_{k=0}^{\infty}B_{i,k} = \{a_i\}$.  For each
$k\in\NN$, let $c(k)$ be the number of connected components of $B(k)
:= \bigcup_{i=1}^{m}B_{i,k}$, and fix a computable map
$C:\{1,\ldots,m\}\times\NN\to\NN$ such that for each $k\in\NN$,
$\{C(1,k),\ldots,C(m,k)\} = \{1,\ldots,c(k)\}$ and
\begin{eqnarray*}
B(1,k) & := & \bigcup\{B_{i,k} : 1\leq i\leq m, C(i,k) = 1\},\\
\vdots\\
B(c(k),k) & := & \bigcup\{B_{i,k} : 1\leq i\leq m, C(i,k) = c(k)\},
\end{eqnarray*}
are the connected components of $B(k)$.  We assume that if $c(k+1) =
c(k)$, then $C(i,k) = C(i,k+1)$ for all $i\in\{1,\ldots,m\}$.  By
using the sequences \eqref{eq:boxes}, we can effectively verify that
\eqref{eq:distinct} holds.  So by throwing away initial segments of
the sequences \eqref{eq:boxes}, we may assume that for all distinct
$i,j\in\{1,\ldots,m\}$, if $\bar{\alpha}(i) = \bar{\alpha}(j)$, then
$a_i$ and $a_j$ are contained in different connected components of
$B(0)$, and hence $C(i,0)\neq C(j,0)$.

For each $k\in\NN$ and $i\in\{1,\ldots,c(k)\}$, let $I(i,k) =
\{\alpha(j) : C(j,k) = i\}$, and note that $|I(i,k)| = |\{j :
C(j,k) = i\}|$, so
\[
\sum_{i=1}^{c(k)}|I(i,k)| = \sum_{i=1}^{c(k)}|\{j : C(j,k) = i\}| = m.
\]
It follows from Lemma \ref{lemma:interpolationPoly} that for each
$i\in\{1,\ldots,c(k)\}$ and $\beta\in I(i,k)$, we can fix
$p^{(k)}_{i,\beta}(x,y) \in \QQ(y)[x]$ such that for all $b =
(b_1,\ldots,b_{c(k)})\in B(1,k)\times\cdots\times B(c(k),k)$, all
$j\in\{1,\ldots,c(k)\}$, and all $\gamma\in I(j,k)$,
\[
\PDn{\gamma}{p^{(k)}_{i,\beta}}{x}(b_j,b) = \left\{\begin{array}{ll}
1, & \text{if $(\gamma,j) = (\beta,i)$},\\
0, & \text{if $(\gamma,j) \neq (\beta,i)$}.
\end{array}\right.
\]

The connected components of $B(k)$ converge to the distinct points
of $\{a_1,\ldots,a_m\}$ as $k\to\infty$, so there exists $K\in\NN$
such that for all $k,l\geq K$, the numbers $c(k)$ and  $c(l)$ are
the same, the maps $i\mapsto C(i,k)$ and $i\mapsto C(i,l)$ are the
same, and the polynomials $p^{(k)}_{i,\beta}$ and $p^{(l)}_{i,\beta}$
are the same.  We simply refer to this common number as $c$, this
common map as $i\mapsto C(i)$, and these common polynomials as
$p_{i,\beta}$.  Let $\tld{a} = (\tld{a}_1,\ldots,\tld{a}_c)$ be the $c$-tuple of points in $\RR^n$ consisting of the $c$ distinct members of $\{a_1,\ldots,a_m\}$
ordered so that $\tld{a}_i\in B(i,k)$ for all $i\in\{1,\ldots,c\}$ and
$k\geq K$. Then for all $i,j\in\{1,\ldots,m\}$,
\[
\PDn{\alpha(i)}{p_{C(j),\alpha(j)}}{x}(a_i,\tld{a}) = \left\{\begin{array}{ll}
1, & \text{if $i = j$},\\
0, & \text{if $i \neq j$}.
\end{array}\right.
\]
Fix a computable map $\chi$ on the set $\{(i,k)\in\NN^2 : 1\leq
i\leq c(k)\}$ such that $\chi(i,k)\in B(i,k)\cap\QQ^n$ for all
$i,k$. Put $\chi(k) = (\chi(1,k),\ldots,\chi(c(k),k))$.  Then
$\lim_{k\to\infty}\chi(k) = \tld{a}$, so
\[
\lim_{k\to\infty}
\left(
\PDn{\alpha(i)}{p^{(k)}_{C(j,k),\alpha(j)}}{x}(a_i,\chi(k))
\right)_{\!\!\!(i,j)\in\{1,\ldots,m\}^2}
\hspace*{-10pt}
=
\left(
\PDn{\alpha(i)}{p_{C(k),\alpha(j)}}{x}(a_i,\tld{a})
\right)_{\!\!\!(i,j)\in\{1,\ldots,m\}^2}
\hspace*{-10pt}
= \id_m.
\]
To find our desired polynomials $p_1(x),\ldots,p_m(x)$, simply find $k\in\NN$ such that
\[
\left\|\left(
\PDn{\alpha(i)}{p^{(k)}_{C(j,k),\alpha(j)}}{x}(a_i,\chi(k))
\right)_{(i,j)\in\{1,\ldots,m\}^2}
- \id_m\right\| < \epsilon,
\]
and let $p_j(x) = p^{(k)}_{C(j,k),\alpha(j)}(x,\chi(k))$ for each $j\in\{1,\ldots,m\}$.
\end{proof}

\begin{definition}\label{def:distCondBoxPt}
Let $\sigma$, $\alpha$, and $\xi$ be maps on $\{1,\ldots,n\}$ as described in clause 2 of Definition \ref{def:SpolyName}.  A compact rational box $B\subseteq\RR^m$ {\bf satisfies the $(\sigma,\alpha,\xi)$-distinctness condition} if for all distinct $i,j\in\{1,\ldots,n\}$ such that $i\approx_{(\sigma,\alpha)} j$, we have $\Pi_{\xi(i)}(B) \cap \Pi_{\xi(j)}(B) = \emptyset$.
A number $\epsilon > 0$ {\bf witnesses that $a\in\RR^m$ satisfies the $(\sigma,\alpha,\xi)$-distinctness condition} if for all distinct $i,j\in\{1,\ldots,n\}$ such that $i\approx_{(\sigma,\alpha)} j$, we have
$\|\Pi_{\xi(i)}(a) - \Pi_{\xi(j)}(a)\| > \epsilon$.
\end{definition}

\begin{notation}
The {\bf width} of a bounded box $B = \prod_{i=1}^{n}B_i$ in $\RR^n$ is defined by
\[
\width(B) = \max_{i\in\{1,\ldots,n\}}\length(B_i).
\]
For any nonempty open interval $I$ and $\delta > 0$, define $I_\delta$ according to the type of the interval $I$, as follows:
\[
I_\delta =
\begin{cases}
[-\frac{1}{\delta},\frac{1}{\delta}],
    & \text{if $I = \RR$,}
\\
[a+\delta, \frac{1}{\delta}],
    & \text{if $I = (a,+\infty)$,}
\\
[-\frac{1}{\delta},b-\delta],
    & \text{if $I = (-\infty,b)$,}
\\
[a+\delta, b-\delta],
    & \text{if $I = (a,b)$.}
\end{cases}
\]
If $D\subseteq\RR^m$ is a rational box manifold given by
\[
D = \left(\prod_{i\in E}U_i\right)\times\{u\},
\]
where $E\subseteq\{1,\ldots,m\}$, $u\in\QQ^{E^c}$, and each $U_i$ is an open rational interval in $\RR^{\{i\}}$, then for each $\delta > 0$, define
\[
D_\delta = \left(\prod_{i\in E}(U_i)_\delta\right)\times\{u\}.
\]
\end{notation}

\begin{lemma}\label{lemma:findNonsingZeros}
Given any $\delta\in\QQ_+$ and any name $(p(x,y),\sigma,\alpha,\xi,\name(D))\in \Delta_{m,n}^{0}(\rho)$ for an $\S$-polynomial map $P = p\circ F:D\to\RR^{\dim(D)}$, we can effectively find a finite family $\B$ of disjoint, bounded,  rational box manifolds, and a corresponding family $\{\lambda'_B\}_{B\in\B}$ of increasing functions $\lambda'_B:\{1,\ldots,\dim(D)\}\to\{1,\ldots,m+n\}$, with the following properties:
\begin{enumerate}{\setlength{\itemsep}{3pt}
\item
For each $B\in\B$, we have $\width(B) < \delta$ and $B = B'\times B''$, where $B''$ is open in $\RR^n$, $B'\subseteq\RR^m$ is open in $D$, $B'\cap D_{2\delta} \neq \emptyset$, and $B'$ satisfies the $(\sigma,\alpha,\xi)$-distinctness condition.

\item
Let $E\subseteq\{1,\ldots,m\}$ be such that $D = U\times\{u\}$ with $U$ open in $\RR^E$.  Then for each $B\in\B$, the statement $\IF_\emptyset(P;B')$ holds, $\im(\lambda'_B)\subseteq E\cup\{m+1,\ldots,m+n\}$, and $\det\PD{}{p}{(x,y)_{\lambda'_B}} \neq 0$ on $B$.

\item
For each $a\in D_{2\delta}$, if $P(a) = 0$, $|\det\PD{}{P}{x_E}(x)| > \delta$ for all $x\in D$ with $\|x-a\| \leq \delta$, and $2\delta$ witnesses that $a$ satisfies the $(\sigma,\alpha,\xi)$-distinctness condition, then there exists $B = B'\times B'' \in \B$ such that $a\in B'$.
}\end{enumerate}
\end{lemma}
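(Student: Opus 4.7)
The plan is to effectively enumerate candidate triples $(B',B'',\lambda')$ and certify them using the effective verifiability of the $\IF_\emptyset$ predicate from Remarks \ref{rmk:IFsection}.4, together with Lemma \ref{lemma:compEVT} for checking quantitative lower bounds on polynomial expressions. Three preliminary observations drive the construction. First, the $(\sigma,\alpha,\xi)$-distinctness condition on a rational box $B'$ is decidable, amounting to finitely many disjointness tests between rational boxes. Second, at any point $a$ satisfying the hypothesis of property 3, equation \eqref{eq:rankp} applied with $p$ in place of $P$ shows that some increasing injection $\lambda':\{1,\ldots,\dim(D)\}\to E\cup\{m+1,\ldots,m+n\}$ satisfies $\det\PD{}{p}{(x,y)_{\lambda'}}(F(a))\neq 0$. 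Third, the quantitative hypothesis of property 3 (the lower bound $\delta$ on $|\det\PD{}{P}{x_E}|$ over a $\delta$-neighborhood of $a$, and the witness $2\delta$ for distinctness), combined with a modulus of continuity for $P$ and its first derivatives obtained from the approximation oracle applied to a name for $P$, yields a rational radius $\rho>0$ such that for every $a$ satisfying the hypothesis there is an open rational box manifold $B'\ni a$ of width less than $\rho$ with $\cl(B')\subseteq D$ on which $\IF_\emptyset(P;B')$ holds, $B'$ satisfies the distinctness condition, and the determinant from the second observation is bounded away from $0$ on a product $B=B'\times B''$ of width less than $\delta$.

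With these in hand, the algorithm proceeds as follows. Dovetail an enumeration of all triples $(B',B'',\lambda')$ with $B'$ a rational box manifold open in $D$, $\cl(B')\subseteq D$, $\width(B')<\rho$, $B'\cap D_{2\delta}\neq\emptyset$, and $B'$ satisfying the $(\sigma,\alpha,\xi)$-distinctness condition, with $B''$ a rational box such that $\width(B'\times B'')<\delta$, and with $\lambda'$ an increasing injection into $E\cup\{m+1,\ldots,m+n\}$. For each triple, launch parallel verifications that $\IF_\emptyset(P;B')$ holds (Remarks \ref{rmk:IFsection}.4), that $f(B')\subseteq\Int(B'')$ (via the approximation oracle for $\S$), and that $|\det\PD{}{p}{(x,y)_{\lambda'}}|$ exceeds a chosen positive rational on $B$ (Lemma \ref{lemma:compEVT}). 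Whenever all three verifications succeed for a triple and the resulting box $B$ is disjoint from every previously added box (decidable for rational boxes), append $(B,\lambda'_B=\lambda')$ to $\B$.

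To force termination with a finite output, compute from the modulus of continuity and $\delta$ an a priori upper bound $K$ on the number of zeros of $P$ in $D_{3\delta}$ whose $x_E$-Jacobian determinant is bounded below by a computable threshold derived via the chain rule from the determinant condition verified on $B$; the uniform quantitative isolation implied by this lower bound forces any two such zeros to be separated by a computable positive distance, bounding $K$ in terms of the Lebesgue measure of $D_{3\delta}$. Halt the dovetailed enumeration once $|\B|=K$. Correctness of property 3 follows because the preliminary observations guarantee that each $a$ satisfying the hypothesis eventually produces a certifiable triple, while each added box contains exactly one nonsingular zero of $P$ in the class counted by $K$, so the counter cannot saturate before every relevant $a$ has been covered.

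The main obstacle will be quantifying $\rho$ and $K$ from the modulus of continuity and showing that the dovetailed enumeration discovers a covering triple for every relevant $a$ before the counter reaches $K$; a minor subtlety arises for points $a$ straddling the boundary between natural candidate boxes, which is handled by allowing $B'$ to range over all sufficiently small open rational box manifolds rather than restricting to a fixed dyadic grid, and by selecting, among competing triples for the same $a$, whichever is certified first by the dovetailing.
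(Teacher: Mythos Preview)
Your enumeration of certifiable triples and your use of Remarks \ref{rmk:IFsection}.4 and the approximation oracle are sound, and the first two preliminary observations are correct. The gap is entirely in your termination argument.

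You propose to halt once $|\B|=K$, where $K$ is an \emph{upper bound} on the number of zeros in some quantitatively defined class. But an upper bound need not be attained: if the actual number of such zeros is strictly less than $K$, your algorithm runs forever. Computing the \emph{exact} count a priori is the very problem you are trying to solve, so you cannot take $K$ to be exact either. There is a secondary incoherence: the class you count is ``zeros whose $x_E$-Jacobian determinant exceeds a threshold derived from the determinant condition verified on $B$,'' but that threshold depends on the particular $B$ being verified, so it does not define a single class with a fixed cardinality; and the condition you actually verify on each box is on $\det\PD{}{p}{(x,y)_{\lambda'}}$, not on $\det\PD{}{P}{x_E}$, so a box you add may contain a zero outside the class you are counting. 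Hence the assertion ``the counter cannot saturate before every relevant $a$ has been covered'' is unjustified on two fronts.

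The paper's proof avoids counting altogether and terminates by \emph{compactness}. It first covers the compact set $D_{2\delta}$ by small open boxes on each of which either $|\det\PD{}{P}{x_E}|<\delta$ or $|\det\PD{}{P}{x_E}|>\delta/2$ on a $\delta$-neighborhood; since $D_{2\delta}$ is co-c.e.\ compact and these alternatives form an open cover, a finite subcover is found in finite time. Discarding the first kind leaves a compact set $K$ on which every zero of $P$ is nonsingular; $K$ is then covered by boxes on which either $\IF_\emptyset(P;\cdot)$ holds or $P$ is nonvanishing, and again a finite subcover is found effectively. Only then does one pass, for each surviving box, to the choice of $B''$ and $\lambda'_B$. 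The halting condition ``this finite family of open boxes covers this co-c.e.\ compact set'' is effectively verifiable and is guaranteed to become true, which is exactly what your counting bound fails to provide. A smaller but related issue: your claimed uniform radius $\rho$ guaranteeing $\IF_\emptyset(P;B')$ around every relevant zero is not obviously computable from the paper's inductive definition of $\IF$; the compactness route makes no such uniform radius necessary.
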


When applying this lemma in the proof of Theorem \ref{introThm:genDense}, we say that a point $a$ is {\bf realized} by a box $B = B'\times B''\in\B$ if $a$ the unique zero of $P$ in $B'$.  We say that a point is {\bf realized by $\B$} if it is realized by some box in $\B$.  Also, we will write $\lambda_B:\{1,\ldots,n\}\to\{1,\ldots,m+n\}$ for the unique increasing function such that $\im(\lambda_B)\cup\im(\lambda'_B) = E\cup\{m+1,\ldots,m+n\}$. We call $\lambda_B$ the map {\bf complementary to $\lambda'_B$ relative to $D$}.  (The phrase ``relative to $D$'' is used because $D$ determines the set $E$.)

\begin{proof}
Begin computably enumerating all rational box manifolds $A\subseteq\RR^m$ which are open in $D$, $\width(A) < \delta$, $A\cap D_{2\delta}\neq\emptyset$, and either $|\det \PD{}{P}{x_E}| < \delta$ on $\cl(A)$ or $|\det \PD{}{P}{x_E}| > \frac{\delta}{2}$ on $\{x\in D : \min_{a\in\cl(A)}\|x-a\|\leq \delta\}$.  Stop enumerating once these boxes cover the compact rational box $D_{2\delta}$, which will occur after finite time.  Discard all boxes $A$ for which $|\det \PD{}{P}{x_E}| < \delta$ on $\cl(A)$, since we do not need to find any nonsingular zeros in these boxes. Let $K$ be the intersection of $D_{2\delta}$ with the union of the closures of all the remaining boxes $A$, for which $|\det \PD{}{P}{x_E}| > \frac{\delta}{2}$ holds on $\{x\in D : \min_{a\in\cl(A)}\|x-a\|\leq \delta\}$.  Start computably enumerating all rational box manifolds $C'$ which are open in $D$, $\width(C') < \delta$, $C'\cap K \neq\emptyset$, and either $\IF_{\emptyset}(P;C')$ holds or $P(C') \subseteq\RR^{\dim(D)}\setminus\{0\}$.  Stop enumerating once these boxes cover the compact set $K$, which will occur after finite time since every zero of $P$ in $K$ is a nonsingular zero.  Keep all the enumerated boxes $C'$ for which $\IF_\emptyset(P;C')$ holds and which satisfy the $(\sigma,\alpha,\xi)$-distinctness condition; discard all other boxes $C'$.  This collection of boxes $C'$ satisfies all of the properties in 1-3 which concern only the boxes $B'$ in the statement of the lemma, and not the boxes $B''$.

Fix one such box $C'$, and let $a\in C'$ be the unique zero of $P$ in $C'$.  Applying the computation following Lemma \ref{lemma:idealMemOracle} with $d=0$ (specifically, \eqref{eq:rankp}) shows that $\left.\rank \PD{}{p}{(x_E,y)}\right|_{(x,y) = F(a)} = \dim(D)$.  Since $F(a) = (a,f(a))$ is a computable point, we may find a bounded, open, rational box $B''\subseteq\RR^n$ containing $f(a)$ and an increasing map $\lambda':\{1,\ldots,\dim(D)\}\to\{1,\ldots,m+n\}$ such that $\im(\lambda')\subseteq E\cup\{m+1,\ldots,m+n\}$ and $\det \PD{}{p}{(x,y)_\lambda} \neq 0$ on $\{a\}\times \cl(B'')$.  By computably enumerating all rational box manifolds $B'$ which are open in $C'$ and contain $a$, we may find $B'$ such that $\IF_\emptyset(P;B')$ holds and $f(B')\subseteq B''$.  Put $B = B'\times B''$ and $\lambda'_B = \lambda'$.  The collection of all boxes $B$ and maps $\lambda'_B$, constructed as such, satisfy the conclusion of the lemma.
\end{proof}

\begin{notation}
For any families $\S=\{S_\sigma\}_{\sigma\in\Sigma}$ and $\T=\{T_\sigma\}_{\sigma\in\Sigma}$ of complex-valued functions with $\dom(S_\sigma) = \dom(T_\sigma)$ for all $\sigma\in\Sigma$, define $\S + \T = \{S_\sigma + T_\sigma\}_{\sigma\in\Sigma}$ and $\S - \T = \{S_\sigma - T_\sigma\}_{\sigma\in\Sigma}$.
\end{notation}

We are now ready to prove Theorem \ref{introThm:genDense}, restated below.

\begin{theorem}\label{thm:genDense}
If $\C$ contains the IF-system of all computably analytic functions, then $\Comp_{\C}(\rho)\cap\Gen_{\C}(\rho)$ is dense in $\Comp_{\C}(\rho)$.  In fact, there is an algorithm which acts as follows:
\begin{quote}
Given a $C^\infty$ approximation algorithm for $\S\in\Comp_{\C}(\rho)$ and a computable map $\epsilon:\Delta(\rho)\to\QQ_+$, the algorithm returns a $C^\infty$ approximation algorithm for some $\T\in \Gen(\rho) \cap \Ball_{\C}(\S,\epsilon)$.
\end{quote}
\end{theorem}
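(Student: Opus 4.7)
The plan is to construct from $\S$ a sequence of rational-polynomial ``correction'' families $\{f^{(k)}\}_{k\in\NN}$ with $f^{(0)}=0$, such that their partial sums form a computable Cauchy sequence in $\Comp_{\an}(\rho,R)$ for some fixed computable $R$, with limit $f$; I then set $\S^{(k)}:=\S+f^{(k)}$ and $\T:=\S+f$. By Lemma~\ref{lemma:compCauchy}, $f\in\Comp_{\an}(\rho,R)$, so each $f_\sigma$ is analytic and computably $C^\infty$, giving $f_\sigma\in\A_{\rho(\sigma)}\subseteq\C_{\rho(\sigma)}$ (using $\A\subseteq\C$); combined with $\S\in\Comp_{\C}(\rho)$ and closure of $\C_{\rho(\sigma)}$ under addition, this gives $\T\in\Comp_{\C}(\rho)$, and the $k$-th polynomial addend will be made small enough on $[-\rho(\sigma),\rho(\sigma)]$ in $C^\infty$-norm to place $\T\in\Ball_{\C}(\S,\epsilon)$. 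The main work is to arrange genericity of $\T$. By Lemma~\ref{lemma:tdBounds}(1) with $d=0$, this is equivalent to: for every $(\sigma,\alpha,\xi)$-distinct nonsingular zero $a$ of every $\T$-polynomial map $P=p\circ F$ with name in $\Delta_{m,n}^{0}(\rho)$, and every $q\in\QQ[x,y]\setminus\II(X_{P,a})$ (where $X_{P,a}$ is the irreducible component of $\VV(p(x,y),x_{E^c}-u)$ containing $F(a)$), one has $q(F(a))\neq 0$.

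Computably enumerate tasks $\tau_k=(\text{a }\Delta^{0}_{m,n}(\rho)\text{-name for }P_k,\ q_k\in\QQ[x,y],\ \delta_k\in\QQ_+)$. At stage $k\geq 1$, apply Lemma~\ref{lemma:findNonsingZeros} to $P_k^{(k-1)}$ with parameter $\delta_k$ to obtain a finite disjoint family $\B_k$ of boxes $B=B'\times B''$, each realizing a nonsingular zero $a_B$ of $P_k^{(k-1)}$ with distinctness witnessed by $2\delta_k$. For each $B\in\B_k$, compute generators of $\II(X_{P_k,a_B})$ by the procedure in the proof of Theorem~\ref{thm:genDec} and test via Gr\"{o}bner basis whether $q_k\in\II(X_{P_k,a_B})$; if so, no action is needed. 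Otherwise, apply Lemma~\ref{lemma:pertubationPoly} separately for each $\sigma_0$ occurring in the name, on the distinct points in $\{\Pi_{\xi(i)}(a_B):\sigma(i)=\sigma_0,\ B\in\B_k\}$ paired with multi-indices $\alpha(i)$ (coinciding point/derivative pairs across boxes force the corresponding $f$-values to coincide and amount to the same inequation; in-slice $2\delta_k$-distinctness supplies the hypothesis on the remaining data). The resulting rational polynomials, added to $S^{(k-1)}_{\sigma_0}$ with tiny rational coefficients $c_{B,i}$, form a ``dual basis'' perturbation: to leading order in $c$, each $f_i(a_B)$ shifts by $c_{B,i}$ and no other target value changes. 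By Proposition~\ref{prop:contIFT}, if the $C^\infty$-norm is sufficiently small, the new nonsingular zero $a_B^{(k)}$ stays in $B'$ with its distinctness and component membership $X_{P_k,a_B}$ preserved, and the map $c\mapsto F^{(k)}(a_B^{(k)})$ has full-rank differential at $c=0$ onto the tangent space of $X_{P_k,a_B}$ at $F(a_B)$ (a short chain-rule calculation using the invertibility of $\partial P/\partial x_E$). Search computably for rational $c_{B,i}$ satisfying: (i) the polynomial family $t_k:=f^{(k)}-f^{(k-1)}$ has $\|t_{k,\sigma_0}\|_\infty<2^{-k}$ on $\CC(\rho(\sigma_0),R(\sigma_0))$ and $C^\infty$-norm $<2^{-k}\epsilon(\sigma_0,\alpha)$ on $[-\rho(\sigma_0),\rho(\sigma_0)]$ for every $\alpha$ active so far; (ii) every prior inequation survives within its recorded positive margin; (iii) $q_k(F^{(k)}(a_B^{(k)}))\neq 0$ for every required $B$. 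Such $c$ exists because $q_k$ is nonzero on $X_{P_k,a_B}$, so (iii) excludes only a proper subvariety per $B$ from the $c$-space, a finite union is still proper, and rational points in the complement of this subvariety inside the open box specified by (i) and (ii) are effectively found by enumeration.

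By Lemma~\ref{lemma:compCauchy} the partial sums converge computably to $f\in\Comp_{\an}(\rho,R)$; by Lemma~\ref{lemma:compHolom}, $f$ is computably $C^\infty$, so $\T=\S+f\in\Comp_{\C}(\rho)$, and condition (i) gives $\T\in\Ball_{\C}(\S,\epsilon)$. To verify genericity of $\T$, take any $(\sigma,\alpha,\xi)$-distinct nonsingular zero $a$ of a $\T$-polynomial map $P$ and any $q\in\QQ[x,y]\setminus\II(X_{P,a})$; choose $\delta\in\QQ_+$ small enough that $a\in D_{2\delta}$, $|\det\partial P/\partial x_E|>\delta$ on a $\delta$-neighborhood of $a$, and $2\delta$ witnesses distinctness. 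Some stage $k$ has $\tau_k=(\text{name of }P,q,\delta)$; taking $k$ so that $\S^{(k-1)}$ is close enough to $\T$ in $C^\infty$ (via Lemma~\ref{lemma:compConvDiff}), Proposition~\ref{prop:contIFT} places a nonsingular zero $a^{(k-1)}$ of $P^{(k-1)}$ near $a$, realized by some $B\in\B_k$ with the same component. Stage $k$ enforces $q(F^{(k)}(a_B^{(k)}))\neq 0$, later stages preserve it by (ii), and letting $k\to\infty$ yields $q(F(a))\neq 0$. Hence $\II(F(a))=\II(X_{P,a})$ and $\td_{\QQ}\QQ(F(a))=n$. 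The main obstacle is the joint satisfiability of (ii) and (iii) at each stage: (ii) constrains $c$ to an open box whose radius depends on finitely many prior positive margins, while (iii) excludes a proper algebraic subvariety. The resolution is that open sets in $\RR^n$ always meet the complement of any proper subvariety, and the dyadic scaling in (i) guarantees the perturbations sum to a computably convergent analytic $f$, keeping every prior inequation intact and producing an effective algorithm.
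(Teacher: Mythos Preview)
Your high-level strategy matches the paper's: build $\T$ as a computable limit of $\S^{(k)}=\S+\text{(rational polynomials)}$, use Lemma~\ref{lemma:pertubationPoly} to construct dual-basis perturbations, locate nonsingular zeros via Lemma~\ref{lemma:findNonsingZeros}, and arrange finitely many open inequations at each stage while keeping the perturbation summable.  The differences are in bookkeeping.  The paper uses a \emph{diagonal} scheme: at stage $k$ it processes \emph{all} names $P_1,\ldots,P_{k+1}$ with $\delta=2^{-(k+1)}$, and it records the inequations as holding on entire boxes (so nested boxes $B^{(K)}\supseteq B^{(K+1)}\supseteq\cdots$ with $\bigcap B^{(k)}=\{F(a)\}$ carry the inequation to the limit automatically).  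The paper also checks inequations of the form $q\circ\Pi_{\lambda_B}\neq 0$ for \emph{all} nonzero $q\in\QQ[y_1,\ldots,y_n]$, which directly gives $\td_\QQ\QQ(F(a))\geq n$ and avoids computing $\II(X)$.  It also processes the realized points of a given stage one at a time (an inner loop), which sidesteps the cross-box coincidences you handle parenthetically.  Your alternative choices (enumerate triples $(\text{name},q,\delta)$; test $q\in\II(X)$; record margins; perturb all boxes at once) can be made to work, but they require more argument than you supply.

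The point that most needs attention is your final verification.  You fix $a,P,q$, choose a suitable $\delta$, and then write ``some stage $k$ has $\tau_k=(\text{name of }P,q,\delta)$; taking $k$ so that $\S^{(k-1)}$ is close enough to $\T$.''  As written this is incoherent: once the triple is fixed, $k$ is fixed.  What you actually need is that among the infinitely many stages carrying the pair $(\text{name of }P,q)$ (obtained by varying $\delta$ over rationals below your threshold), some stage is large; this holds because distinct $\delta$'s give distinct stages, but you should say so.  More seriously, you must argue that the limit $a_B^{(\infty)}$ of your tracked zeros \emph{equals} $a$, not merely some other zero of $P$ in $\cl(B')$.  This follows because both $|a-a^{(k-1)}|$ and the total displacement $|a_B^{(\infty)}-a^{(k-1)}|$ (bounded via Proposition~\ref{prop:contIFT} by the tail sum of your $C^1$-perturbations) can be made small by taking $k$ large, and nonsingular zeros of $P$ are isolated---but this step is absent from your sketch.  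The paper's nested-box machinery with $\width(B^{(k)})<2^{-k}\to 0$ handles exactly this issue without extra argument; if you keep your linear enumeration of triples, you should add the missing reasoning explicitly.
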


\begin{proof}
Fix $\S = \{S_\sigma\}_{\sigma\in\Sigma}$ in $\Comp_{\C}(\rho)$ and a computable map $\epsilon:\Delta(\rho)\to\QQ_+$.  The goal is to construct a generic $\T = \{T_\sigma\}_{\sigma\in\Sigma}$ in $\Ball_{\C}(\S,\epsilon)$.  Since we can always replace $\epsilon$ with the map $(\sigma,\alpha) \mapsto \epsilon(\sigma,\alpha)/2$, it suffices to construct a generic $\T$ in $\Comp_{\C}(\rho)$ such that
\begin{equation}\label{eq:boundT}
\left|\PDn{\alpha}{T_\sigma}{x} - \PDn{\alpha}{S_\sigma}{x}\right| \leq \epsilon(\sigma,\alpha)
\quad\text{on $[-\rho(\sigma),\rho(\sigma)]$}
\end{equation}
for all $(\sigma,\alpha)\in\Delta(\rho)$.

Fix a computable enumeration of $\bigcup_{(m,n)\in\NN_+\times\NN}\Delta_{m,n}^{0}(\rho)$,
\begin{equation}\label{eq:namesEnum}
\left\{(p_i(x,y), \sigma_i, \alpha_i, \xi_i, \name(D_i))\right\}_{i\in\NN_+},
\end{equation}
and a doubly indexed computable enumeration of
$\bigcup_{n\in\NN_+}(\QQ[x_1,\ldots,x_n]\backslash\{0\})$,
\[
\left\{q_{n,i}(x_1,\ldots,x_n)\right\}_{(n,i)\in\NN_{+}^{2}},
\]
such that $\QQ[x_1,\ldots,x_n]\backslash\{0\} =
\{q_{n,i}(x_1,\ldots,x_n) : i\in\NN_+\}$ for each $n\in\NN_+$, where $\NN_+ = \{1,2,3,\ldots\}$.  For each $i\in\NN_+$, let $m(i)$ and $n(i)$ be such that $(p_i(x,y), \sigma_i, \alpha_i, \xi_i,\name(D_i)) \in \Delta_{m(i),n(i)}^{0}(\rho)$.  Thus when we write $p_i(x,y)$, it is understood that $x$ denotes an $m(i)$-tuple of
variables and $y$ denotes an $n(i)$-tuple of variables.

Also fix a computable map $R:\Sigma\to\bigcup_{n\in\NN}\QQ^{n}_{+}$ with arity map $\eta$.  (The choice of $R$ is rather irrelevant, so $R$ could be rather simple.  For example, one could define $R(\sigma) = (1,\ldots,1)\in\QQ_{+}^{\eta(\sigma)}$ for each $\sigma\in\Sigma$.)

We will construct
\begin{enumerate}{\setlength{\itemsep}{3pt}
\item
a computable sequence $\{\S^{(k)}\}_{k\in\NN}$ in $\Comp_{\C}(\rho)$,

\item
a computable sequence $\{(\B^{(k)}_{1},\ldots,\B^{(k)}_{k})\}_{k\in\NN_+}$, where each $\B^{(k)}_{i}$ is a finite family of disjoint, bounded, rational box manifolds which are open in $D_i\times\RR^{n(i)}$,

\item
a computable sequence $\{(\{\lambda'_B\}_{B\in\B^{(k)}_{1}}, \ldots, \{\lambda'_B\}_{B\in\B^{(k)}_{k}})\}_{k\in\NN_+}$, where for each $k$, $i$, and $B\in\B^{(k)}_{i}$, $\lambda'_B:\{1,\ldots,n(i))\} \to \{1,\ldots,m(i)+n(i)\}$ is an increasing map,
}\end{enumerate}
such that Property($k$) holds for all $k\in\NN$.  For each $k\in\NN$ and $i\in\NN_+$, we write $P^{(k)}_{i} = p_i\circ F^{(k)}_{i}:D_i\to\RR^{\dim(D_i)}$ for the $\S^{(k)}$-polynomial map named by $(p_i(x,y), \sigma_i, \alpha_i, \xi_i, \name(D_i))$.\\

\noindent{\bf Property(0).}
{\slshape
We have $\S^{(0)} = \S$.
}\\

\noindent{\bf Property(k) for $\mathbf{k > 0}$.}
{\slshape
We have that $\S^{(k)} \in \Ball(\S,\epsilon)$, that $\S^{(k)} - \S^{(k-1)}$
is a family of polynomial functions with rational coefficients, that $S^{(k)}_{\sigma} = S^{(k-1)}_{\sigma}$ for all $\sigma\in\Sigma \setminus \bigcup_{i=1}^{k}\im(\sigma_i)$, and that
\begin{equation}\label{eq:forceCauchy}
\dist(\S^{(k)}, \S^{(k-1)})
:=
\sup\left\{
|S^{(k)}_{\sigma}(x) - S^{(k-1)}_{\sigma}(x)| : \sigma\in\Sigma, x\in\CC(\rho(\sigma),R(\sigma))\right\}
\leq
\frac{1}{2^k}.
\end{equation}
For each $i\in\{1,\ldots,k\}$, the family $\B^{(k)}_{i}$ and its corresponding family of increasing maps $\{\lambda'_B\}_{B\in\B^{(k)}_{i}}$ satisfy the conclusion of Lemma \ref{lemma:findNonsingZeros} applied to the $\S^{(k)}$-polynomial map $P^{(k)}_{i}:D_i\to\RR^{\dim(D_i)}$ and the number $\delta = 2^{-k}$.  For every $i\in\{1,\ldots,k-1\}$ and every point $a^{(k-1)}$ realized by a box $B^{(k-1)}$ in $\B^{(k-1)}_{i}$, the statement $\IF_\emptyset(P^{(k)}_{i};B^{(k-1)})$ holds; if we let $a^{(k)}$ be the unique zero of $P^{(k)}_{i}$ in $B^{(k-1)}$, then $a^{(k)}$ is realized by a box $B^{(k)}$ in $\B^{(k)}_{i}$ such that $B^{(k)} \subseteq B^{(k-1)}$ and $\lambda'_{B^{(k)}} = \lambda'_{B^{(k-1)}}$.  We call $a^{(k)}$ the realized point of $\B^{(k)}_{i}$ {\bf associated} to $a^{(k-1)}$ and call $B^{(k)}$ the box in $\B^{(k)}_{i}$ {\bf associated} to $B^{(k-1)}$; we shall use the word ``associated'' transitively.  Finally, for each $i\in\{1,\ldots,k\}$ and $B\in\B^{(k)}_{i}$,
\begin{equation}\label{eq:forceGeneric}
q_{n(i),j}\circ\Pi_{\lambda_B}(x,y) \neq 0
\quad\text{for all $(x,y)\in B$ and $j\in\{1,\ldots,k\}$,}
\end{equation}
where $\lambda_B:\to\{1,\ldots,n(i)\}\to\{1,\ldots,m(i)+n(i)\}$ is complementary to $\lambda'_B$ relative to $D_i$.
}\\

Suppose we have constructed $\{\S^{(k)}\}_{k\in\NN}$, along with the corresponding $\B^{(k)}_{i}$'s and $\lambda'_B$'s, satisfying Property($k$) for each $k\in\NN$.  Note that for each $k\in\NN$, $\S^{(k)}-\S$ is a family of polynomial functions (all but finitely many of which are zero).  It follows from \eqref{eq:forceCauchy} that $\{\S^{(k)}-\S\}_{k\in\NN_+}$ is a computably Cauchy sequence of members of $\Comp_{\an}(\rho,R)$, and hence converges computably to a function in $\Comp_{\an}(\rho,R)$ by Lemma \ref{lemma:compCauchy}.  Define
\[
\T = \lim_{k\to\infty}\S^{(k)} = \S + \lim_{k\to\infty}(\S^{(k)} - \S),
\]
and note that $\T\in\Comp_{\C}(\rho)$ since $\C$ contains the IF-system of all computably analytic functions.  Lemma \ref{lemma:compConvDiff} (just the noneffective content) implies that $\lim_{k\to\infty}\PDn{\alpha}{S^{(k)}_{\sigma}}{x} = \PDn{\alpha}{T_\sigma}{x}$ on $[-\rho(\sigma),\rho(\sigma)]$ for all $(\sigma,\alpha)\in\Delta(\rho)$.  Therefore $\T$ satisfies \eqref{eq:boundT} since $\S^{(k)}\in\Ball(\S,\epsilon)$ for all $k\in\NN$.

We now show that $\T$ is generic.  To do this, we shall quote Lemma \ref{lemma:compConvDiff} and Proposition \ref{prop:contIFT}, but here we are only using the noneffective content of these statements since genericity has nothing to do with computability.  Fix a name $(p(x,y),\sigma,\alpha,\xi,\name(D)) \in \Delta_{m,n}^{0}(\rho)$ for a $\T$-polynomial map $P = p\circ F:D\to\RR^{\dim(D)}$ and a nonsingular zero $a$ of $P$ satisfying the $(\sigma,\alpha,\xi)$-distinctness condition.  By Remark \ref{rmk:IFsection}.3, there exists a bounded rational box manifold $A$ which contains $a$, is open in $D$, $\cl(A)\subseteq D$, and $\IF_\emptyset(P;A)$ holds.  For each $k\in\NN$, write $P^{(k)} = p\circ F^{(k)} : D\to\RR^{\dim(D)}$ for the $\S^{(k)}$-polynomial map named by $(p(x,y),\sigma,\alpha,\xi,\name(D))$.  Since $\lim_{k\to\infty} \partial^1(P^{(k)}) = \partial^1(P)$ on $\cl(A)$ by Lemma \ref{lemma:compConvDiff}, Proposition \ref{prop:contIFT} implies that there exists $K\in\NN$ such that $\IF_\emptyset(P^{(k)};A)$ holds for all $k\geq K$.  For each $k\in\ K$, write $a^{(k)}$ for the unique zero of $P^{(k)}$ in $A$. Proposition \ref{prop:contIFT} also implies that $\lim_{k\to\infty} a^{(k)} = a$.  We may fix $\delta > 0$ such that $a\in D_{2\delta}$, $|\det\PD{}{P}{x}(x)| > \delta$ for all $x\in D$ with $\|x-a\|\leq \delta$, and $2\delta$ witnesses that $a$ satisfies the $(\sigma,\alpha,\xi)$-distinctness condition.  Therefore by making $K$ larger, we may obtain that for all $k\geq K$, the point $a^{(k)}$ is realized by some box $B^{(k)} = B'^{(k)} \times B''^{(k)}$ in $\B^{(k)}_{i}$, where $K\geq i$ and $(p(x,y),\sigma,\alpha,\xi,\name(D))$ is the $i$th member of the sequence \eqref{eq:namesEnum}.  Again, for $K$ sufficiently large, we have
\[
A \supseteq B'^{(K)} \supseteq B'^{(K+1)} \supseteq B'^{(K+2)} \supseteq \cdots
\]
and
\[
\bigcap_{k\geq K}B^{(k)} = \{F(a)\},
\]
and also
\[
q_{n,i}\circ\Pi_{\lambda}(x,y) \neq 0
\quad\text{on $B^{(k)}$, for all $k\geq K$ and $i\in\{1,\ldots,k\}$,}
\]
where $\lambda':\{1,\ldots,\dim(D)\}\to\{1,\ldots,m+n\}$ is the common increasing map associated with each of the boxes $B^{(k)}$ (for all $k\geq K$), and $\lambda:\{1,\ldots,n\}\to\{1,\ldots,m+n\}$ is complementary to $\lambda'$ relative to $D$.  Therefore $q\circ\Pi_\lambda\circ F(a) \neq 0$ for all $q\in\QQ[x_1,\ldots,x_n]\setminus\{0\}$, which shows that $\td_{\QQ} \QQ(F(a)) \geq n$, and hence that
\[
\td_{\QQ}\QQ(F(a)) = n
\]
by Lemma \ref{lemma:tdBounds}.1.  This proves the theorem, assuming we can construct the sequence $\{\S^{(k)}\}_{k\in\NN}$ with $\S^{(k)}$ satisfying Property($k$) for all $k\in\NN$.\\

To construct this sequence, let $k\in\NN$, and inductively assume that for each $l\in\{0,\ldots,k\}$ we have constructed $\S^{(l)}$, along with $(\B^{(l)}_{1},\ldots,\B^{(l)}_{l})$ and the corresponding $\lambda'_B$'s, satisfying Property($l$).  To construct $\S^{(k+1)}$, begin by applying Lemma \ref{lemma:findNonsingZeros} to each of the functions $P^{(k)}_{1},\ldots,P^{(k)}_{k+1}$ with $\delta = 2^{-(k+1)}$.  Denote the resulting families of boxes by $\B^{(k,0)}_{1},\ldots,\B^{(k,0)}_{k+1}$.  By relying on our previous work, and by shrinking boxes when necessary by using the comments in the last paragraph of the Remarks \ref{rmk:IFsection}, we can assume that for each $i\in\{1,\ldots,k\}$, every point realized by $\B^{(k)}_{i}$ is realized by $\B^{(k,0)}_{i}$, say by boxes $B\in\B^{(k)}_{i}$ and $\tld{B}\in\B^{(k,0)}_{i}$ with $\tld{B}\subseteq B$ and $\lambda'_{\tld{B}} = \lambda'_B$.  Suppose there are exactly $d$ points realized by the families $\B^{(k,0)}_{1},\ldots,\B^{(k,0)}_{k+1}$, and call these points $a_1,\ldots,a_d$.  We will construct $\S^{(k+1)}$ by first constructing a sequence $\S^{(k,0)},\ldots,\S^{(k,d)}$ of members of $\Comp_{\C}(\rho)$, along with the associated $\B^{(k,0)}_{i}$'s,\ldots,$\B^{(k,d)}_{i}$'s and $\lambda'_B$'s, satisfying Property$(k,l)$ for all $l\in\{0,\ldots,d\}$.\\

\noindent{\bf Property(k,0).}
{\slshape
We have $\S^{(k,0)} = \S^{(k)}$, and $\B^{(k,0)}_{1},\ldots,\B^{(k,0)}_{k+1}$ and the $\lambda'_B$'s are defined above.
}\\

\noindent{\bf Property(k,l) for $\mathbf{0 < l \leq d}$.}
{\slshape
We have that $\S^{(k,l)} \in \Ball(\S,\epsilon)$, that $\S^{(k,l)} - \S^{(k,l-1)}$ is a sequence of polynomial functions with rational coefficients, that $\S^{(k,l)}_{\sigma} = \S^{(k,l-1)}_{\sigma}$ for all $\sigma\in\Sigma \setminus \bigcup_{i=1}^{k+1} \im(\sigma_i)$, and that
\[
\dist(\S^{(k,l)},\S^{(k,l-1)}) \leq \frac{1}{d 2^{k+1}}.
\]
For each $i\in\{1,\ldots,k+1\}$, the families $\B^{(k,l)}_{i}$ and $\B^{(k,l-1)}_{i}$ realize the same number of points, with each box $\tld{B}$ in $\B^{(k,l)}_{i}$ being a subset of a unique box $B$ in $\B^{(k,l-1)}_{i}$, and $\lambda'_{\tld{B}} = \lambda'_B$.  Thus we have a notion of association, and for each $i\in\{1,\ldots,d\}$ we shall write $a_{i}^{(l)}$ for the point realized by $\B^{(k,l)}$ associated to $a_i$.  Finally, the following hold for each $i\in\{1,\ldots,k+1\}$ and $B\in\B^{(k,l)}_{i}$:
\begin{enumerate}{\setlength{\itemsep}{3pt}
\item
If $i\in\{1,\ldots,k\}$ and the point realized by $B$ is associated to a point realized by $\B^{(k)}_{i}$, then
\[
\text{$q_{n(i),j}\circ\Pi_{\lambda_B}(x,y) \neq 0$
for all $j\in\{1,\ldots,k\}$ and $(x,y)\in B$.}
\]

\item
If the point realized by $B$ is among the points $\{a^{(l)}_{1},\ldots,a^{(l)}_{l}\}$, then
\[
\text{$q_{n(i),j}\circ\Pi_{\lambda_B}(x,y) \neq 0$
for all $j\in\{1,\ldots,k+1\}$ and $(x,y)\in B$.}
\]
}\end{enumerate}
}

Once we have constructed $\S^{(k,0)},\ldots,\S^{(k,d)}$, along with the corresponding families of boxes and names of coordinate projections, satisfying Property$(k,l)$ for all $l\in\{0,\ldots,d\}$, then we define $\S^{(k+1)} = \S^{(k,d)}$ and $\B^{(k+1)}_{i} = \B^{(k,d)}_{i}$ for all $i\in\{1,\ldots,k+1\}$.  Observe that we have achieved Property$(k+1)$.  This finishes the proof, up the the construction of $\S^{(k,0)},\ldots,\S^{(k,d)}$.
\\

To construct $\S^{(k,0)},\ldots,\S^{(k,d)}$, we let $l\in\{0,\ldots,d-1\}$, and inductively assume that we have constructed $\S^{(k,0)},\ldots,\S^{(k,l)}$, along with their corresponding families of boxes and names of coordinate projections, satisfying Property$(k,i)$ for all $i\in\{0,\ldots,l\}$.  To complete the proof we must construct $\S^{(k,l+1)}$, and we do this by concentrating on the point $a^{(l)}_{l+1}$.  The point $a^{(l)}_{l+1}$ is realized by a box $B$ in $\B^{(k,l)}_{i}$ for some $i\in\{1,\ldots,k+1\}$.  Now, because we are
concentrating on the point $a^{(l)}_{l+1}$, the box $B$, and the $\S^{(k,l)}$-polynomial map  $P^{(k,l)}_{i}:D_i\to\RR^{\dim(D_i)}$
named by $(p_i(x,y),\sigma_i,\alpha_i,\xi_i,\name(D_i))$, to reduce
clutter in notation, and to free up the index $i$ for other use, we shall
use the following notational abbreviation.\\

\noindent\emph{Notational Abbreviation $(\star)$}.  We write $a$ for
the point $a^{(l)}_{l+1}$, write $m$ and $n$ for $m(i)$ and $n(i)$, write $P =
p\circ F:D\to\RR^{\dim(D)}$ and $F(x) = (x,f(x))$ for $P^{(k,l)}_{i} =
p_i\circ F^{(k,l)}_{i}:D_i\to\RR^{\dim(D_i)}$ and $F^{(k,l)}_{i}(x)
= (x,f^{(k,l)}_{i}(x))$, and write $(p(x,y),\sigma,\alpha,\xi,\name(D))$ for
$(p_i(x,y),\sigma_i,\alpha_i,\xi_i,\name(D_i))$.  We still write $B$ for the box realizing $a$, and write $B = B'\times B''$ with $B'\subseteq D$ and $B''\subseteq\RR^n$.  We also let $E\subseteq\{1,\ldots,m\}$ be such that $D = U\times\{u\}$ with $U$ open in $\RR^E$ and $u\in\QQ^{E^c}$.\\

Suppose the image of the map $\sigma$ consists of exactly $r$ members of $\Sigma$, which we call $\sigma_1,\ldots,\sigma_r$.  For each $i\in\{1,\ldots,r\}$, let $\Gamma(i)$ be the $\approx_\sigma$-equivalence class given by
\[
\Gamma(i) = \{j\in\{1,\ldots,n\} : \sigma(j) = \sigma_i\}.
\]
Put $s(i) = |\Gamma(i)|$ for each $i$, and fix a bijection
\[
\Gamma:\{(i,j)\in\NN^2 : 1\leq i\leq r, 1\leq j\leq s(i)\} \to \{1,\ldots,n\}
\]
such that $\Gamma(i) = \{\Gamma(i,1),\ldots,\Gamma(i,s(i))\}$ for each $i\in\{1,\ldots,r\}$.  The box $B$ satisfies the $(\sigma,\alpha,\xi)$-distinctness condition, so the point $a$ does too.  Therefore for each $i\in\{1,\ldots,r\}$ and distinct $j_1,j_2\in\{1,\ldots,s(i)\}$ such that $\alpha(j_1) = \alpha(j_2)$,
\[
\Pi_{\xi\circ\Gamma(i,j_1)}(a) \neq \Pi_{\xi\circ\Gamma(i,j_2)}(a).
\]
It therefore follows from Lemma \ref{lemma:pertubationPoly} that for each $i\in\{1,\ldots,r\}$, we can effectively find polynomials $\Phi_{i,1},\ldots,\Phi_{i,s(i)} \in \QQ[z_i]$, where $z_i$ is an $\eta(\sigma_i)$-tuple of variables, such that the matrix
\begin{equation}\label{eq:nonsingDet}
\left(
\PDn{\alpha\circ\Gamma(i,j_1)}{\Phi_{i,j_2}}{z_i}
(\Pi_{\xi\circ\Gamma(i,j_1)}(a))
\right)_{(j_1,j_2)\in\{1,\ldots,s(i)\}^2}
\end{equation}
is invertible.  For each $i\in\{1,\ldots,r\}$, let $w_i = (w_{i,1},\ldots,w_{i,s(i)})$ and
\[
\Phi_i(z_i,w_i) = \sum_{j=1}^{s(i)} w_{i,j}\Phi_{i,j}(z_i).
\]
Put $w = (w_1,\ldots,w_r)$ and
\[
\Phi(x,w) = \left(\Phi_i(\Pi_{\xi\circ\Gamma(i,j)}(x), w_i)\right)_{
1\leq i\leq r, 1\leq j\leq s(i)},
\]
where $\Phi_i(\Pi_{\xi\circ\Gamma(i,j)}(x), w_i)$ is the $\Gamma(i,j)$-th component of $\Phi(x,w)$.

The functions $\Phi$ allows us to define a parameterized form of $\S^{(k,l)}$.  Let $\S^{(k,l)}_{\Phi} = \{\S^{(k,l)}_{\Phi,\varsigma}\}_{\varsigma\in\Sigma}$, where $\S^{(k,l)}_{\Phi,\varsigma} : [-\rho(\varsigma),\rho(\varsigma)] \times \RR^n\to\RR$ is defined by
\[
\S^{(k,l)}_{\Phi,\varsigma}(y,w)
=
\begin{cases}
S^{(k,l)}_{\sigma_i}(y) + \Phi_i(y,w_i),
    & \text{if $\varsigma = \sigma_i$ for some $i\in\{1,\ldots,r\}$,}
\\
S^{(k,l)}_{\varsigma}(y),
    & \text{if $\varsigma\in\Sigma\setminus\{\sigma_1,\ldots,\sigma_r\}$,}
\end{cases}
\]
where $y$ denotes an $\eta(\varsigma)$-tuple of variables.  For each $w\in\RR^n$, write $\S^{(k,l)}_{\Phi}(w) = \{\S^{(k,l)}_{\Phi,\varsigma}(w)\}_{\varsigma\in\Sigma}$, where $\S^{(k,l)}_{\Phi,\varsigma}(w):[-\rho(\varsigma),\rho(\varsigma)]\to\RR $ is the map $y\mapsto \S^{(k,l)}_{\Phi,\varsigma}(y,w)$.
\\

For the moment, let us depart from Notational Abbreviation $(\star)$.  Consider any name $(p(x,y),\sigma,\alpha,\xi,\name(D)) \in \Delta^{0}_{m,n}(\rho)$ (not just our particular name of interest from $(\star)$, but any name).  In the natural way, $(p(x,y),\sigma,\alpha,\xi,\name(D))$ names both an $\S^{(k,l)}$-polynomial map $P = p\circ F:D\to\RR^{\dim(D)}$, with $F(x) = (x,f(x))$, and also an $\S^{(k,l)}_{\Phi}$-polynomial map $P_\Phi = p\circ F_\Phi:D\times\RR^n\to\RR^{\dim(D)}$, with $F_\Phi(x,w) = (x,f_\Phi(x,w))$, where
\[
f_\Phi(x,w) =
\left(
\PDn{\alpha(1)}{S^{(k,l)}_{\Phi,\sigma(1)}}{x}(\Pi_{\xi(1)}(x),w), \ldots,
\PDn{\alpha(n)}{S^{(k,l)}_{\Phi,\sigma(n)}}{x}(\Pi_{\xi(n)}(x),w)
\right).
\]
Because $\PDn{\beta}{\Phi}{x}(x,0) = 0$ for all $\beta\in\NN^n$, we have $f_\Phi(x,0) = f(x)$, $F_\Phi(x,0) = F(x)$, and $P_\Phi(x,0) = P(x)$.  Thus $\S^{(k,l)}_{\Phi}(0) = \S^{(k,l)}$.  Note that for any $b\in\QQ^n$, $\S^{(k,l)}_{\Phi}(b)$ is a member of $\Comp_{\C}(\rho)$.  Also note that there are only finitely many $\sigma\in\Sigma$ such that $\S^{(k,l)}_{\Phi,\sigma}(w)$ differs from $\S^{(k,l)}_{\sigma}$, any difference is a rational polynomial, and polynomials have only finitely many nonzero derivatives.  It follows that the map $b\mapsto \S^{(k,l)}_{\Phi}(b)$ is a continuous map from $\QQ^n$ into $\Comp_{\C}(\rho)$, and in fact, computably so:  given any $b\in\QQ^n$ and computable map $\delta:\Delta(\rho)\to\QQ_+$, we can effectively find an open rational box $W\subseteq\RR^n$ containing $b$ such that $\S^{(k,l)}_{\Phi}(W\cap\QQ^n) \subseteq \Ball(\S^{(k,l)}_{\Phi}(b),\delta)$.  It follows that we can effectively find an open rational box $W\subseteq\RR^n$ containing the origin such that the following hold for all $b\in W\cap\QQ^n$:
\begin{enumerate}{\setlength{\itemsep}{3pt}
\item
The family $\S^{(k,l)}_{\Phi}(b)$ has Property$(k,l)$, for same families of boxes and coordinate projections as $\S^{(k,l)}$.  (Proposition \ref{prop:contIFT} is being used here, in full strength.)

\item
$\displaystyle \dist(S^{(k,l)}_{\Phi}(b), \S^{(k,l)}) \leq \frac{1}{d 2^{k+1}}$.

\item
$\S^{(k,l)}_{\Phi}(b) \in \Ball(\S,\epsilon)$.\\
}\end{enumerate}

We now return to Notational Abbreviation $(\star)$.  Because $P_\Phi(a,0) = P(a) = 0$ and $\det\PD{}{P_\Phi}{x_E}(a,0) = \det\PD{}{P}{x_E}(a) \neq 0$, Remark \ref{rmk:IFsection}.4 shows that by shrinking $W$, in an effective manner, we can find a bounded rational box manifold $V$ which is open in $D\times\RR^n$, contains $(b,0)$, $W = \{w : (x,w)\in V\}$, and $\IF_\Lambda(P_\Phi;V)$ holds, where $\Pi_\Lambda:\RR^{m+n}\to\RR^n$ is the coordinate projection $\Pi_\Lambda(x,w) = w$.  Let $\varphi:W\to V$ be the section of the projection $\Pi_\Lambda$ implicitly defined by $P_\Phi\circ\varphi(x,w) = 0$ on $W$.  Now, the map $\varphi:W\to\varphi(W)$ is a $\C$-analytic isomorphism of $n$ dimensional manifolds, and
\[
\PD{}{F_\Phi}{(x,w)}(a,0)
=
\left(\begin{matrix}
\id             & 0 \\
\PD{}{f}{x}(a)  & A
\end{matrix}\right),
\]
where up to permuting the rows of $A$, the matrix $A$ can be written in the block diagonal form
\[
A = \left(\begin{array}{cccc}
A_1    & 0      & \cdots & 0      \\
0      & A_2    & \ddots & \vdots \\
\vdots & \ddots & \ddots & 0      \\
0      & \cdots & 0      & A_r
\end{array}\right),
\]
where for each $i\in\{1,\dots,r\}$, $A_i$ is the matrix \eqref{eq:nonsingDet}.  Therefore $\PD{}{F_\Phi}{(x,w)}(a,0)$ is invertible.  So by Remark \ref{rmk:IFsection}.4 again, we may shrink $W$ in an effective manner so that $F_\Phi\circ\varphi:W\to D\times\RR^n$ is a $\C$-analytic isomorphism onto its image.  Its image is contained in $\{(x,y)\in B : p(x,y) = 0\}$, which is an $n$-dimensional manifold, so $F_\Phi\circ\varphi(W)$ is an open submanifold of $\{(x,y)\in B : p(x,y) = 0\}$.  The projection $\Pi_{\lambda_B}:\RR^{m+n}\to\RR^n$ is a $\C$-analytic isomorphism from $\{(x,y)\in B : p(x,y) = 0\}$ onto an open subset of $\RR^n$, so the map $\Pi_{\lambda_B}\circ F_\Phi\circ\varphi :W\to\RR^n$ is a $\C$-analytic isomorphism onto its image, which is open in $\RR^n$.  Therefore the set
\[
\bigcap_{i=0}^{k+1}\{w\in W : q_{n,i}\circ\Pi_{\lambda_B} \circ F_\Phi \circ \varphi(w) \neq 0\}
\]
is dense and c.e.\ open in $W$, with the computable content of this statement following from the fact that $q_{n,i}\circ\Pi_{\lambda_B} \circ F_\Phi \circ \varphi$ is computably continuous.  So we can effectively find $b\in \QQ^n \cap W$ such that
\[
q_{n,i}\circ\Pi_{\lambda_B} \circ F_\Phi \circ \varphi(b) \neq 0
\quad\text{for all $i\in\{1,\ldots,k+1\}$,}
\]
and thereby can find a sufficiently small compact rational box $\tld{B}\subseteq B$ which is a neighborhood of $F_\Phi\circ\varphi(b)$ in $D\times\RR^n$ such that
\[
q_{n,i}\circ\Pi_{\lambda_B}(x,y) \neq 0
\quad\text{for all $(x,y)\in \tld{B}$ and $i\in\{1,\ldots,k+1\}$.}
\]
Therefore we satisfy Property$(k,l+1)$ if we set $\S^{(k,l+1)} = \S^{(k,l)}_{\Phi}(b)$, replace $B$ by $\tld{B}$ to form the family $\B^{(k,l+1)}_{i}$ from $\B^{(k,l)}_{i}$ (where $(p(x,y),\sigma,\alpha,\xi,\name(D))$ is the $i$th member of the sequence \eqref{eq:namesEnum}), set $\lambda'_{\tld{B}} = \lambda'_B$, and keep everything else the same.  This completes the proof.
\end{proof}

\bibliographystyle{amsplain}
\bibliography{bibliotex}
\end{document}